\let\mathcal\mathscr
\def\le{\leqslant}
\def\ge{\geqslant}
\def\leq{\leqslant}
\begin{document}

\newtheorem{theorem}{Theorem}
\newtheorem{lemma}[theorem]{Lemma}
\newtheorem{proposition}[theorem]{Proposition}
\newtheorem{claim}[theorem]{Claim}
\newtheorem{cor}[theorem]{Corollary}
\newtheorem{prop}[theorem]{Proposition}
\newtheorem{rem}[theorem]{Remark}
\newtheorem{definition}[theorem]{Definition}
\newtheorem{question}[theorem]{Question}
\newtheorem{conj}[theorem]{Conjecture}
\newcommand{\hh}{{{\mathrm h}}}

\numberwithin{equation}{section}
\numberwithin{theorem}{section}
\numberwithin{table}{section}

\numberwithin{figure}{section}

\def\sssum{\mathop{\sum\!\sum\!\sum}}
\def\ssum{\mathop{\sum\ldots \sum}}
\def\iint{\mathop{\int\ldots \int}}

%\def\squareforqed{\hbox{\rlap{$\sqcap$}$\sqcup$}}
%\def\qed{\ifmmode\squareforqed\else{\unskip\nobreak\hfil
%\penalty50\hskip1emrll\nobreak\hfil\squareforqed
%\parfillskip=0pt\finalhyphendemerits=0\endgraf}\fi}%%

%  use the AMS-Euler Fraktur fonts
%%%%%%%%%%%%%%%%%%%%%%%%%%%%%%%%%%
\newfont{\teneufm}{eufm10}
\newfont{\seveneufm}{eufm7}
\newfont{\fiveeufm}{eufm5}
%%%%%%%%%%%%%%%%%%%%%%%%%%%%%%%%%
%
%  allow automatic size selection in math mode
%
%%%%%%%%%%%%%%%%%%%%%%%%%%%%%%%%%
\newfam\eufmfam
     \textfont\eufmfam=\teneufm
\scriptfont\eufmfam=\seveneufm
     \scriptscriptfont\eufmfam=\fiveeufm
%%%%%%%%%%%%%%%%%%%%%%%%%%%%%%%%%
%
%  \frak works on a single symbol at a time...
%
\def\frak#1{{\fam\eufmfam\relax#1}}

\newcommand{\bflambda}{{\boldsymbol{\lambda}}}
\newcommand{\bfmu}{{\boldsymbol{\mu}}}
\newcommand{\bfxi}{{\boldsymbol{\xi}}}
\newcommand{\bfrho}{{\boldsymbol{\rho}}}

\def\fA{{\mathfrak A}}
\def\fB{{\mathfrak B}}
\def\fC{{\mathfrak C}}
\def\fI{{\mathfrak I}}
\def\fJ{{\mathfrak J}}
\def\fK{{\mathfrak K}}
\def\fm{{\mathfrak m}}
\def\fM{{\mathfrak M}}
\def\fS{{\mathfrak S}}
 \def\fW{{\mathfrak W}}
  \def\fU{{\mathfrak U}}

\def \balpha{\bm{\alpha}}
\def \bbeta{\bm{\beta}}
\def \bgamma{\bm{\gamma}}
\def \blambda{\bm{\lambda}}
\def \bchi{\bm{\chi}}
\def \bphi{\bm{\varphi}}
\def \bpsi{\bm{\psi}}
\def \bomega{\bm{\omega}}
\def \btheta{\bm{\vartheta}}

\def \bzeta{\bm{\zeta}}
\def \bxi{\bm{\xi}}

\def\eqref#1{(\ref{#1})}

\def\vec#1{\mathbf{#1}}

%\def\squareforqed{\hbox{\rlap{$\sqcap$}$\sqcup$}}
%\def\qed{\ifmmode\squareforqed\else{\unskip\nobreak\hfil
%\penalty50\hskip1emrll\nobreak\hfil\squareforqed
%\parfillskip=0pt\finalhyphendemerits=0\endgraf}\fi}

%%%%%%%%%%%%%%%%%%%%%%%%%
% Alphabet calligraphie %
%%%%%%%%%%%%%%%%%%%%%%%%%
\def\cA{{\mathcal A}}
\def\cB{{\mathcal B}}
\def\cC{{\mathcal C}}
\def\cD{{\mathcal D}}
\def\cE{{\mathcal E}}
\def\cF{{\mathcal F}}
\def\cG{{\mathcal G}}
\def\cH{{\mathcal H}}
\def\cI{{\mathcal I}}
\def\cJ{{\mathcal J}}
\def\cK{{\mathcal K}}
\def\cL{{\mathcal L}}
\def\cM{{\mathcal M}}
\def\cN{{\mathcal N}}
\def\cO{{\mathcal O}}
\def\cP{{\mathcal P}}
\def\cQ{{\mathcal Q}}
\def\cR{{\mathcal R}}
\def\cS{{\mathcal S}}
\def\cT{{\mathcal T}}
\def\cU{{\mathcal U}}
\def\cV{{\mathcal V}}
\def\cW{{\mathcal W}}
\def\cX{{\mathcal X}}
\def\cY{{\mathcal Y}}
\def\cZ{{\mathcal Z}}
\newcommand{\rmod}[1]{\: (\mbox{mod} \: #1)}
\renewcommand{\bmod}[1]{\,(\mathrm{mod}{\: #1})}

\def\cg{{\mathcal g}}

\def\vr{\mathbf r}

\def\e{{\mathbf{\,e}}}
\def\ep{{\mathbf{\,e}}_p}
\def\eq{{\mathbf{\,e}}_q}

\def\Tr{{\mathrm{Tr}}}
\def\Nm{{\mathrm{Nm}}}

 \def\SS{{\mathbf{S}}}

\def\lcm{{\mathrm{lcm}}}
\def\GL{{\mathrm{GL}}}
\def\ud{{\mathrm{d}}}

\def\({\left(}
\def\){\right)}
\def\fl#1{\left\lfloor#1\right\rfloor}
\def\rf#1{\left\lceil#1\right\rceil}

\def\mand{\qquad \mbox{and} \qquad}

\let\ve=\varepsilon

\newcommand{\commB}[2][]{\todo[#1,color=blue!60]{B: #2}}
\newcommand{\commI}[2][]{\todo[#1,color=green!60]{I: #2}}
\newcommand{\commII}[2][]{\todo[#1,color=magenta!60]{I: #2}}
\newcommand{\commW}[2][]{\todo[#1,color=yellow!60]{W: #2}}
\newcommand{\commX}[2][]{\todo[#1,color=red!60]{X: #2}}

%%%%%%%%%%%%%%%%%%%%%%%%%%%%%%%%%%%%%%%%%%%%%%%%%%%%%%%%
%%%%%%%%%%%%%%%%%%%%%%%%%%%%%%%%%%%%%%%%%%%%%%%%%%%%%%%%
%%%%%%%%%%%%%%%%%%%%%%%%%%%%%%%%%%%%%%%%%%%%%%%%%%%%%%%%
%%%%%%%%%%%%%%%%%%%%%%%%%%%%%%%%%%%%%%%%%%%%%%%%%%%%%%%%

%%%%%%%  END OF STANDARD STUFF %%%%%%%%%

%%%%%%%%%%%%%%%%%%%%%%%%%%%%%%%%%%%%%%%%%%%%%%%%%%%%%%%%
%%%%%%%%%%%%%%%%%%%%%%%%%%%%%%%%%%%%%%%%%%%%%%%%%%%%%%%%
%%%%%%%%%%%%%%%%%%%%%%%%%%%%%%%%%%%%%%%%%%%%%%%%%%%%%%%%
%%%%%%%%%%%%%%%%%%%%%%%%%%%%%%%%%%%%%%%%%%%%%%%%%%%%%%%
%%%%%%%%%%%
%%% Spell

\hyphenation{re-pub-lished}

\mathsurround=1pt

\def\bfdefault{b}

\def \F{{\mathbb F}}
\def \K{{\mathbb K}}
\def \N{{\mathbb N}}
\def \Z{{\mathbb Z}}
\def \Q{{\mathbb Q}}
\def \R{{\mathbb R}}
\def \C{{\mathbb C}}
\def\Fp{\F_p}
\def \fp{\Fp^*}

\def\Kmnp{\cK_p(m,n)}
\def\Kmnq{\cK_q(m,n)}
\def\Kamnq{\cK_q(m,an)}
\def\KKap{\cH_p(a)}
\def\KKaq{\cH_q(a)}
\def\KKmnp{\cH_p(m,n)}
\def\KKmnq{\cH_q(m,n)}

\def\Kl{{\mathsf K}}

\def\Klmnp{\cK_p(\ell, m,n)}
\def\Klmnq{\cK_q(\ell, m,n)}

\def \SALMNq {\cS_q(\balpha;\cL,\cI,\cJ)}
\def \SALMNp {\cS_p(\balpha;\cL,\cI,\cJ)}

\def \SACXMQX {\fS(\balpha,\bzeta, \bxi; M,Q,X)}

\def \balpha{\bm{\alpha}}
\def \bbeta{\bm{\beta}}
\def \bgamma{\bm{\gamma}}
\def \blambda{\bm{\lambda}}
\def \bchi{\bm{\chi}}
\def \bphi{\bm{\varphi}}
\def \bpsi{\bm{\psi}}

\def\SXaq{S(X;a,q)}
\def\MXaq{\mathrm{M}(X;a,q)}
\def\RXaq{\mathrm{R}(X;a,q)}
\def\RXap{\mathrm{R}(X;a,p)}
\def\EAR{\cE(X;\cA,q)}
\def\EaR{\cE^*(X;\cA,q)}
\def\eps{\varepsilon}

\def\SIJq{S_q(\balpha; \cI,\cJ)}
\def\SIJp{S_p(\balpha; \cI,\cJ)}
\def\Sq{S_{q}(\balpha)}
\def\Sqa{S_{q,a}(\balpha)}
\def\Spa{S_{p,a}(\balpha)}
\def\Sq{S_{q}(\balpha)}
\def\Sp{S_{p}(\balpha)}
\def\SMp{S^\sharp_p(\balpha)}

\def\UMNp{U_p(\balpha, \bbeta; M,N)}
\def\VMNp{V_p(\balpha, \bbeta; M,N)}
\def\LG#1{\(\frac{#1}{p}\)}

\def\SMJq{S_q(\balpha; \cM,\cJ)}
\def\SMJp{S_p(\balpha; \cM,\cJ)}

\def\WIKq{W_{q,a}(\bgamma; \cI,K)}
\def\WIKp{W_{p,a}(\bgamma; \cI,K)}
\def\WMKq{W_{q,a}(\bgamma; \cM,K)}
\def\WMKp{W_{p,a}(\bgamma; \cM,K)}

\def\WaIKq{W_{q,a}(\balpha, \bgamma)}
\def\WaIKp{W_{p,a}(\balpha, \bgamma)}
\def\WacIKqr{W_{q,a}(\balpha, \bgamma;r,c)}
\def\WacIKp{W_{p,a,c}(\balpha, \bgamma)}
\def\WaMKq{W^\sharp_{q,a}(\balpha, \bgamma)}
\def\WaMKp{W^\sharp_{p,a}(\balpha, \bgamma)}

\def\WAMNQ{W_q(\balpha, \bgamma; M,N,Q)}

\def\RIJp{\cR_p(\cI,\cJ)}
\def\RIJq{\cR_q(\cI,\cJ)}

\def\TWXJp{\cT_p(\bomega;\cX,\cJ)}
\def\TWXJq{\cT_q(\bomega;\cX,\cJ)}
\def\TWpXJp{\cT_p(\bomega_p;\cX,\cJ)}
\def\TWqXJq{\cT_q(\bomega_q;\cX,\cJ)}
\def\TWJq{\cT_q(\bomega;\cJ)}
\def\TWqJq{\cT_q(\bomega_q;\cJ)}

\newcommand{\supp}{\operatorname{supp}}

  \def \kbar{k^{-1} }
 \def \xbar{x^{-1} }
  \def \ybar{y^{-1} }

\title[Bilinear Forms with Kloosterman Sums]{Bounds on Bilinear Forms with Kloosterman Sums}

 \author[B. Kerr] {Bryce Kerr}
\address{BK: School of Science, University of New South Wales, Canberra, ACT, 2612, Australia}
\email{bryce.kerr@unsw.edu.au}

 \author[I. E. Shparlinski] {Igor E. Shparlinski}

\address{IES: Department of Pure Mathematics, University of New South Wales,
Sydney, NSW 2052, Australia}
\email{igor.shparlinski@unsw.edu.au}

 \author[X. Wu]{Xiaosheng Wu}
\address{XW: School of Mathematics, Hefei University of Technology, Hefei 230009, P.R. China}
\email{xswu@amss.ac.cn}

 \author[P. Xi]{Ping Xi}
 \address{PX: School of Mathematics and Statistics, Xi'an Jiaotong University, Xi'an 710049,
P.R. China}
\email{ping.xi@xjtu.edu.cn}

\begin{abstract}
We prove new bounds on bilinear forms with Kloosterman sums, complementing and improving a series of results by \'E.~Fouvry, E.~Kowalski and Ph.~Michel (2014), V.~Blomer,  \'E.~Fouvry, E.~Kowalski, Ph.~Michel and D. Mili{\'c}evi{\'c} (2017), E.~Kowalski, Ph.~Michel and W.~Sawin (2019, 2020)
and I.~E.~Shparlinski (2019). These improvements rely on new estimates for Type~II bilinear forms with incomplete Kloosterman sums. We also establish new estimates for bilinear forms with one variable from an arbitrary set by introducing techniques from additive combinatorics over prime fields. Some of these bounds have found a crucial application in the recent work of Wu (2020) on asymptotic formulas for the fourth moments of Dirichlet $L$-functions.
As new applications, an estimate for higher moments of averages of Kloosterman sums and the distribution of divisor function in a family of arithmetic progressions are also given.
\end{abstract}

\keywords{Kloosterman sum, bilinear form, additive combinatorics}
\subjclass[2020]{Primary: 11L07; Secondary: 11B30,  11D79}

\maketitle

\tableofcontents

\section{Introduction}\label{sec:introduction}
\subsection{Backgrounds}
This paper concerns bilinear forms of the shape
$$
\sum_{m \in \cI} \sum_{n \in \cJ} \alpha_m \beta_n\phi(m,n)
$$
with $\phi$ coming from complete or incomplete Kloosterman sums, where $\cI,\cJ$ are two intervals given by
 \begin{equation}\label{eq:Interval-IJ}
\cI = \{M_0+1, \ldots, M_0+M\},\ \cJ = \{N_0+1, \ldots, N_0+N\} \subseteq \Z
\end{equation}
for $M,N\geqslant1$, and $\balpha = \{\alpha_m\}$, $\bbeta = \{\beta_n\}$
are arbitrary complex coefficients supported on $\cI$ and $\cJ$, respectively.
%\commI{I prefer $M_0, N_0$ to $M_1,N_1$, as $M_1,N_1$ get different meaning in sec.9.
%This led to some adjustments in sec.5.2}
According to that $\bbeta$ is the indicator function of $\cJ$ or not, we call the above bilinear forms \emph{Type~I} or \emph{Type~II} sums respectively. %%commB added respectively

As usual we define the \emph{Kloosterman sum}
$$
\Kmnq = \sum_{x \in \Z_q^*} \eq(m x +n \xbar)
$$
for $q\in\Z^+$ and $m,n\in\Z$,
where  $\Z_q^*$ denotes the group of units in the residue ring  $\Z_q$  modulo $q$,
$\xbar$ is the multiplicative inverse of $x$ in  $\Z_q^*$ and
$\eq(z) = \exp(2 \pi i z/q)$. Put
$$
S_{q,a}(\balpha,\bbeta)=\sum_{m \in \cI} \sum_{n \in \cJ} \alpha_m \beta_n\Kamnq,
$$
where $a \in \Z$,
and we also write $\Sqa$ for abbreviation if $\bbeta$ is taken as the indicator function of $\cJ$. Moreover, we write $S_q(\balpha,\bbeta)$ and $\Sq$, respectively, in the case $a=1$.

The celebrated Weil bound (see~\cite[Corollary~11.12]{IK04}) gives
\begin{equation}\label{eq:Weil}
\Kmnq\ll  \gcd(m,n,q)^{1/2} q^{1/2+o(1)},
\end{equation}
from which it follows, in the particular case $|\alpha_m|,|\beta_n|\leqslant 1$ for instance, that
\begin{equation}\label{eq:trivial S}
S_{q,a}(\balpha,\bbeta) \ll MNq^{1/2+o(1)}
\end{equation}
uniformly over $a$, see Section~\ref{sec:Not} for the meaning of $A\ll B$. 
We refer this as the \emph{trivial} bound and our focus is
%%commB the goal is to -> and our focus
 to go beyond this boundary as a fundamental problem of independent interest.

On the other hand, the study of $S_{q,a}(\balpha, \bbeta)$, as well as diverse variations, is highly motivated by applications to analytic number theory, including, for example:
\begin{itemize}
\item Asymptotic formulas for moments of $L$-functions in families; see for instance a series of
papers  by
Blomer, Fouvry,  Kowalski, Michel, Mili{\'c}evi{\'c} and Sawin~\cite{BFKMM17a,BFKMM17b, FKM14, KMS17}, %%  and by
Shparlinski~\cite{Sh19} and Wu~\cite{Wu22}. In particular, some bounds of this work are used in the work of  Wu~\cite{Wu22},  which gives the currently best bound on the error term
in such formulas, see Section~\ref{sec: 4th Moment} for more details.
\item Results on the equidistribution of divisor functions in arithmetic progressions,  see Kerr and Shparlinski~\cite{KeS20}, Liu, Shparlinski and Zhang~\cite{LSZ18a}, Wu and Xi~\cite{WX21} and Xi~\cite{Xi18} for instance, see Section~\ref{sec:DIv-AP}
for new applications.
\item Sums of Kloosterman sums with arithmetic weights such as the von
Mangoldt function $\Lambda(n)$, M\"obius function $\mu(n)$ and
divisor function $\tau(n)$,  that is,
$$
 \sum_{n\leq N} w(n) \cK_q(n,a),
$$
 where $w=\Lambda,\mu,\tau$. See, for example, Fouvry,  Kowalski and Michel~\cite{FKM14}, Korolev and Shparlinski~\cite{KoS20}, Kowalski, Michel and Sawin~\cite{KMS20} and Liu, Shparlinski and Zhang~\cite{LSZ19}.
\end{itemize}

%We also note that finite field analogues of the Type-I sums $\SIJq$ have been estimated in~\cite{MacShp}.

A large amount of investigations into $S_{q,a}(\balpha, \bbeta)$ are deeply influenced by the work of Deligne~\cite{De80}
on the Riemann Hypothesis for algebraic varieties over finite fields, as well as the subsequent developments thanks to Laumon, Katz, et al. In particular, Kowalski, Michel and Sawin~\cite{KMS17,KMS20} introduced the  ``shift by $ab$" trick of Vinogradov, Karatsuba and Friedlander--Iwaniec, and transformed $S_{q,a}(\balpha, \bbeta)$ with prime $q$, as well as its extensions, to a certain sum of products of Kloosterman sums with suitable shifts.
The task then reduces to proving that the resultant function comes from some $\ell$-adic sheaves satisfying reasonable purity and irreducibility conditions.
%%commB rephrased end sentences ...and so on.
 The novelty in~\cite{KMS17,KMS20} allows one to go beyond the so-called
\emph{P{\'o}lya--Vinogradov  range} when $M,N = q^{1/2+o(1)}$, which turns out to be very crucial in many applications (see~\cite{KMS17,BFKMMS23} on the analytic theory of $\GL_2$ $L$-functions).

Note that the use of Deligne's work forces one to deal with prime moduli $q$, or squarefree moduli with some extra effort. 
The input from $\ell$-adic cohomology also applies to bilinear forms with a very wide class of kernel functions $\phi$, which are not necessarily Kloosterman sums. This generality admits the diversity of applications of such bilinear forms.
On the other hand, Shparlinski~\cite{Sh19} developed an alternative device which suits very well for bilinear forms with complete and incomplete Kloosterman sums. The argument therein utilizes the exact shape of Kloosterman sums, %comm `and is'-> 'is'
is completely elementary and thus works for arbitrary composite moduli.

%%commB added some more things to the sentence just below

In this paper we enhance the argument of~Shparlinski~\cite{Sh19} by developing elementary point counting methods to obtain effective control on the counting function $J_q(a,K)$ defined by~\eqref{eq:Jq(a,K)}.
We also introduce input from additive combinatorics which allows new estimates for bilinear forms where one variable may come from an arbitrary set. A consequence of such estimates is a new bound for higher moments of Kloosterman sums as given in Theorem~\ref{thm:moments} below. However these results only apply to prime moduli.

In the background of our results is a series of new bounds on Type~II bilinear forms with incomplete Kloosterman sums, see  Section~\ref{sec:TypeII-incomplete-q}. Such sums are of
independent interest as they appear in a number of applications which include
distributions, in earlier approaches by Friedlander and Iwaniec~\cite{FI85} and Heath-Brown~\cite{HB86}, of the ternary divisor function
$$
\tau_3(n)=\#\{(n_1,n_2,n_3)\in\N^3:n_1n_2n_3=n\}
$$
in arithmetic progressions, and a new level of distribution could be guaranteed by some more stronger estimates for bilinear forms with incomplete Kloosterman sums.

One of the most important {\it applications} of our results can be found in the recent work of
Wu~\cite{Wu22}, which gives  new asymptotic formulas on the fourth moments of Dirichlet $L$-functions, improving and generalizing those of Young~\cite{Yo11} and then by Blomer, Fouvry,  Kowalski, Michel and Mili{\'c}evi{\'c}~\cite{BFKMM17a,BFKMM17b}. We give more details in  Section~\ref{sec: 4th Moment}.

\subsection{Previous results}
We now collect previous bounds for $\Sq$ and $S_q(\balpha, \bbeta)$, which can be compared with our new bounds in Section~\ref{sec:New results}. This  aids in applications which require selecting the strongest bound in various ranges of parameters.
We note that for many applications the range $M,N\sim q^{1/2}$ is critical and this is where
our new bounds improve on all previous results.

We also refer to Section~\ref{sec:methods} on some comment concerning
on our approach and its on new features.  In Section~\ref{sec:gen}
we outline further possible generalisations and applications of our ideas.

To ease the comparisons, we now assume the coefficients $\balpha,\bbeta$ are both bounded and $a=1$. % $\gcd(a,q)=1$.

$\bullet$ Besides the trivial bound~\eqref{eq:trivial S}, we also have
\begin{equation}\label{eq:TypeI-complete-Poisson}
\Sq\ll Mq^{1+o(1)}
\end{equation}
by applying Poisson summation or equivalently invoking the P{\'o}lya--Vinogradov method. This classical approach also yields
\begin{equation}\label{eq:TypeII-complete-PolyaVinogradov}
S_q(\balpha, \bbeta)\ll MNq^{1/2+o(1)}\(q^{1/4}M^{-1/2}+q^{-1/4}+N^{-1/2}\)
\end{equation}
for the Type~II sums, see~\cite[Theorem~1.17]{FKM14}, which is non-trivial as long as $M>q^{1/2+\varepsilon}$ and $N>q^{\varepsilon}$ for any $\varepsilon>0$.
Improving the exponent $1/2$
is critical for most interesting applications.

$\bullet$ Kowalski, Michel and Sawin~\cite{KMS17,KMS20} employed the ``shift by $ab$" trick to deal with both of $\Sp$ and $S_{p}(\balpha, \bbeta)$. Driven by $\ell$-adic cohomology, the arguments in
\cite{KMS17,KMS20} work very well with all (hyper-) Kloosterman sums and general hyper-geometric sums without necessary obstructions. In the special case of Kloosterman sums, it is proven in~\cite{KMS17} that
$$ %\label{eq:TypeI-complete-KMS}
\Sp\ll MNp^{1/2+o(1)}\left(\frac{p^3}{M^2N^5}\right)^{1/12}
$$
and
$$ %\label{eq:TypeII-complete-KMS}
S_{p}(\balpha, \bbeta) \ll MNp^{1/2+o(1)}\(M^{-1/2}+(MN)^{-3/16}p^{11/64}\)
$$
with some mild restrictions on the sizes of $M$ and $N$. In particular, they succeed in obtaining non-trivial bounds in the P{\'o}lya--Vinogradov  range when $M,N=p^{1/2+o(1)}$.
Similar to~\cite{KMS20}, some variants of $\Sp$ and $S_{p}(\balpha, \bbeta)$ with an interval $\cI$ replaced by an arbitrary set $\cM$ have been estimated in~\cite{BaSh20} and~\cite{BS23},
respectively.

It is worth mentioning that Kowalski, Michel and Sawin~\cite{KMS20} are able to improve the trivial bound~\eqref{eq:trivial S} as
$$ %\label{eq:TypeII-complete-KMS2}
S_{p}(\balpha, \bbeta) \ll MNp^{1/2-\eta}
$$
for some $\eta>0$, provided that $M,N>p^\delta$ and $MN>p^{3/4+\delta}$ for any $\delta>0$.
The novelty here is that they do reach the exponent $3/4$, which is believed to be a classical barrier analogous to the 1/4 exponent which occurs in %%commB added 1/4
Burgess' bound for short character sums.

$\bullet$ Shparlinski and Zhang~\cite{SZ16}   proved
\begin{equation}\label{eq:TypeI-complete-ShparlinskiZhang1}
S_{q}
\({\mathbf 1}_\cI\) \ll (MN+q)q^{o(1)}
\end{equation}
for all primes $q$, as long as $\balpha= {\mathbf 1}_\cI$ is the indicator function of $\cI$.
This was shortly extended by Blomer, Fouvry, Kowalski, Michel and Mili\'cevi\'c~\cite{BFKMM17b} to non-correlations among Kloosterman sums and Fourier coefficients of modular forms. For an arbitrary $\balpha$, Shparlinski and Zhang~\cite{SZ16} proved
\begin{equation}\label{eq:TypeI-complete-ShparlinskiZhang2}
\Sq \ll \(MN\)^{1/2}q^{1+o(1)}
\end{equation}
for all primes $q$, and since the arguments in~\cite{SZ16} are completely elementary, one can easily check that  the bounds~\eqref{eq:TypeI-complete-ShparlinskiZhang1} and~\eqref{eq:TypeI-complete-ShparlinskiZhang2} can be identically extended to composite $q$.

$\bullet$ Shparlinski~\cite{Sh19} developed an elementary argument to show that
\begin{equation}\label{eq:TypeI-complete-Shparlinski}
\Sq \ll M^{3/4}\(N^{1/8}q+N^{1/2}q^{3/4}\)q^{o(1)}.
\end{equation}
This saves $q^{1/16}$ against the trivial bound in the P{\'o}lya--Vinogradov range $M,N = q^{1/2+o(1)}$, which is also utilized in~\cite{Sh19} to produce an asymptotic formula with a very sharp error term for second moments of twisted modular $L$-functions.
Note that~\eqref{eq:TypeI-complete-Shparlinski}
 also applies without any changes to more general bilinear
forms~\eqref{eq:Type-I-GenSet}.

$\bullet$ Xi~\cite{Xi17} developed an iteration process to produce large sieve inequalities of general trace functions. As a special consequence, it is proven that
$$
S_{q}(\balpha, \bbeta) \ll MNq^{1/2+o(1)}\(M^{-1}+N^{-1}+q^{-1}+(MN)^{-1}q\)
$$
for all squarefree $q$. This gives squareroot cancellations among Type~II sums in the
%% typical
 ``complete''  case $M=N=q$.

$\bullet$ In order to obtain applications to $\tau_3$ in arithmetic progressions, Xi~\cite{Xi18} combined the P{\'o}lya--Vinogradov method with arithmetic exponent pairs developed in~\cite{WX21}, and for any squarefree $q$ with all prime factors at most $q^\varepsilon$ for any $\varepsilon>0$,
it is proven that
$$
S_{q}(\balpha, \bbeta) \ll MNq^{1/2+o(1)}\(q^{\kappa/2}M^{(\lambda-\kappa-1)/2}+q^{-1/4}+N^{-1/2}\),
$$
where $(\kappa,\lambda)$ is (essentially) an arithmetic exponent pair defined as in~\cite{WX21}. In particular, the choice $(\kappa, \lambda)=(1/2,1/2)$ reproduces the above bound~\eqref{eq:TypeII-complete-PolyaVinogradov}. 

We emphasise that most of the above results can also be extended to
the sums $S_{q,a}(\balpha,\bbeta)$ and $S_{q,a}(\balpha)$.

We also note a new approach of Shkredov~\cite{Sh21}, which applies only to prime moduli $q = p$ but works for  very general sums. However in the case of the sums
$S_{p}(\balpha)$ and $S_{p}(\balpha, \bbeta)$ it produces results which so far have been
weaker than the best known. Furthermore, we stress that our approach seems to be the only
known way to obtain results for composite moduli $q$.

\subsection{Notation and conventions}
\label{sec:Not} 
We adopt the Vinogradov symbol $\ll$,  that is,
$$f\ll g~\Longleftrightarrow~f=O(g)~\Longleftrightarrow~|f|\le cg$$
for some absolute constant $c>0$. We also adopt $o$ notation
$$f=o(g)~\Longleftrightarrow |f|\le \varepsilon g,$$
for any $\varepsilon>0$ and sufficiently large values of parameters. Sometimes we will combine $\ll$ and $o$ notation and allow certain dependence on implied constants. In particular, an expression of the form
$$f\ll q^{o(1)}g$$
will mean that for all $\varepsilon>0$ there exists a constant $C_\varepsilon$ such that
$$|f|\le C_{\varepsilon}q^{\varepsilon}g,$$
for sufficiently large values of $q$ (and some other parameters).

It is convenient to write $n\sim N$ to indicate $N<n\leqslant2N$.

Throughout the paper, $p$ always denotes a prime number. For a finite set $\cS$ we use $\# \cS$ to denote its cardinality.

We also write 
$$
\e(z)=\exp(2\pi i z) \mand \eq(z)=\e(z/q)  \quad (\text{for}\ q\in\Z^+).
$$ 
We identify $\Z_q$ by the set $\{0, 1,\ldots, q-1\}$ and $\Z_q^*$ the subset consisting of all elements coprime to $q$,
and by $\{0,1,\ldots,p-1\}$ the finite field $\F_p$.
Hence an interval in $\F_p$ is understood as the set of the form
$$\{N_0+1 \bmod p,\ldots,N_0+N \bmod p\} \subseteq \{0,1,\ldots,p-1\},$$
that is, the set of residues modulo $p$ of some sequence of $N$ consecutive integers $N_0+1,\ldots,N_0+N$.

For the complex weight $\balpha=\{\alpha_m\}$ and $\sigma\ge 1$ we define the norms
$$
\|\balpha\|_\infty=\max_m|\alpha_m|  \mand \|\balpha\|_\sigma =\(\sum_m|\alpha_m|^\sigma\)^{1/\sigma}.
$$
For $g\in L^1(\R)$, define the Fourier transform
\[\widehat{g}(\lambda)=\int_\R g(x)\e(-\lambda x)\ud x.\]
In what follows, by a test function $\Phi$ we always mean a 
non-negative 
%% smooth function 
$\cC^\infty$-function (that is, a function having derivatives of all orders) 
which dominates the indicator
 function of $[-1,1]$. Using that $\Phi \in \cC^\infty$,  applying integration by parts, we have
\begin{equation}\label{eq:Fouriertransform-decay}
\widehat{\Phi}(y)\ll_A (1+|y|)^{-A}
\end{equation}
for any $A>0$, where in the above $\ll_A$ indicates that the implied constant depends on $A$.

%Finally, to simplify the notation, especially in the exponents, we write  $1/ab$ to mean the fraction of the form $1/(ab)$ rather than $b/a$ as the canonical convention requires.

\section{New results}
\label{sec:New results}

\subsection{Type~I sums of complete Kloosterman sums}\label{sec:results}
We first state our main results on upper bounds for $\Sqa$ with general $q$ and $a$.
\begin{theorem}\label{thm:TypeI-complete-q}
Let $q$ be a positive integer and let $\cI,\cJ$ be two intervals as in~\eqref{eq:Interval-IJ}. For any $M,N\geqslant 1$ and $a\in\Z$ with $d=\gcd(a,q)$, we have
$$
\Sqa\ll \|\balpha\|_2M^{1/2}Nq^{1/2+o(1)} \Delta_1(M,N,q,d),
$$
where we may take $\Delta_1(M,N,q,d)$ freely among
\begin{subequations}\begin{align}
&M^{-1/4}N^{-1}q^{1/2}d^{-1/4}+ q^{1/2}N^{-1}M^{-1/2} +N^{-1/2},\label{eq:Delta11}\\
&M^{-1/2}(N^{-3/4}q^{1/2}+d^{1/2})+N^{-1/2},\label{eq:Delta12}\\
&M^{-1/2}(N^{-1}q^{1/2}+(qd)^{1/4})+N^{-1/2}.\label{eq:Delta13}
\end{align}
\end{subequations}
\end{theorem}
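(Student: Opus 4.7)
The plan is to follow the strategy initiated by Shparlinski~\cite{Sh19}: reduce $\Sqa$ via Cauchy--Schwarz and completion to the counting function $J_q(a,K)$ introduced in~\eqref{eq:Jq(a,K)}, and then insert into this reduction the three distinct bounds on $J_q(a,K)$ that the paper develops. The three formulae~\eqref{eq:Delta11}--\eqref{eq:Delta13} for $\Delta_1$ are precisely the outputs of these three insertions.

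Concretely, applying Cauchy--Schwarz in the variable $m$ to strip off the weights $\alpha_m$ gives
$$|\Sqa|^2 \le \|\balpha\|_2^2 \sum_{m\in\cI}\Bigl|\sum_{n\in\cJ}\cK_q(m,an)\Bigr|^2.$$
Opening each Kloosterman sum by its definition and isolating the dependence on $m$, the right-hand side equals
$$\|\balpha\|_2^2 \sum_{x_1,x_2\in\Z_q^*} E(a\bar x_1)\overline{E(a\bar x_2)}\sum_{m\in\cI}\eq\bigl(m(x_1-x_2)\bigr),$$
where $E(y) = \sum_{n\in\cJ}\eq(ny)$ is a geometric sum and $\bar x_i$ denotes the inverse of $x_i$ modulo $q$. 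I would then complete the inner sum over $m$ by introducing a $\cC^\infty$ test function $\Phi$ dominating the indicator of $\cI$ and applying Poisson summation; the decay~\eqref{eq:Fouriertransform-decay} of $\widehat\Phi$ effectively confines the dual Fourier variable to a range of length $O(q^{1+o(1)}/M)$.

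After this completion, and an analogous completion of the geometric sums $E(a\bar x_i)$ in the variable $n$, the expression rearranges to a weighted count of quadruples $(x_1,x_2,n_1,n_2)$ satisfying the congruence system that defines $J_q(a,K)$, with $K\asymp N$. The diagonal part of the count, where $x_1\equiv x_2\pmod q$ and $n_1\equiv n_2\pmod{q/d}$, produces the common terms $q^{1/2}M^{-1/2}N^{-1}$ and $N^{-1/2}$ visible in all three formulae, while the off-diagonal contribution is where~\eqref{eq:Delta11},~\eqref{eq:Delta12} and~\eqref{eq:Delta13} diverge: each corresponds to a distinct estimate for $J_q(a,K)$, tailored respectively to small $d$, to a setting in which a $d^{1/2}$ loss is acceptable in exchange for a better $N$-dependence, and to the P\'olya--Vinogradov regime.

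The main technical obstacle I anticipate is the careful bookkeeping of the $d$-dependence across the Poisson step. The Fourier tail introduces additive shifts $a\bar x_i + h$ inside the geometric sums $E(\cdot)$, and one must verify that these shifts do not inflate the count beyond what the bounds on $J_q(a,K)$ allow. In particular, extracting the correct exponent of $d$ in each of the three bounds, namely $d^{1/4}$, $d^{1/2}$, and $(qd)^{1/4}$, requires matching three regimes of the counting function to three correspondingly different partitions of the off-diagonal terms according to the size of $\gcd(x_1-x_2,q)$, with no slack permitted in the intermediate estimates.
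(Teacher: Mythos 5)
Your overall strategy --- Cauchy--Schwarz in $m$, completion by Poisson summation, reduction to the counting function $J_q$, and three different bounds on $J_q$ yielding the three choices of $\Delta_1$ --- is indeed the skeleton of the paper's argument, which however routes the reduction through Theorem~\ref{thm:TypeII-incomplete-q}: the geometric sum $\sum_{n\in\cJ}\eq(an\bar x)$ is used \emph{first} to localise $a_1\bar x\bmod q_1$ (with $a_1=a/d$, $q_1=q/d$) into dyadic level sets of length $e^iq_1/N$, producing Type~II incomplete sums $\WacIKqr$ with $K=e^iq_1/N$ and $r=q_1$, and only then is Cauchy--Schwarz applied in $m$. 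Your order of operations (Cauchy--Schwarz first, then expanding both Kloosterman sums) leads in principle to essentially the same counting problem, so the route is viable.

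The genuine gap is in your identification of the parameters of that counting problem. The quantity $|E(a\bar x)|$ is of size $e^{-i}N$ precisely when $a_1\bar x\bmod q_1$ lies at distance $\asymp e^{i}q_1/N$ from the origin, so after the dyadic decomposition the variables playing the role of $k_1,k_2$ in $J$ range over sets of length $K\asymp e^i q/(dN)$, \emph{not} $K\asymp N$ as you assert (the two coincide only in the P\'olya--Vinogradov range $N\asymp q^{1/2}$, $d=1$, which may be the source of the slip); inserting Lemmas~\ref{lem:Jq(a,K)-Heath-Brown}, \ref{lem:Jq(a,K)-new} and~\ref{lem:Jq(a,K)-onaverage} with $K\asymp N$ does not reproduce \eqref{eq:Delta11}--\eqref{eq:Delta13}. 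Two further points do not survive as stated. First, the $d$-dependence does not come from partitioning the off-diagonal terms by $\gcd(x_1-x_2,q)$: it enters because the localisation of $a\bar x$ is only modulo $q_1=q/d$, and one then needs Lemma~\ref{lem:Jq(a,K;r,c)}, that is $J_q(a,K;r,c)\leqslant (q/r)J_r(c^{-1}a,K)$, to pass to a clean count modulo $q_1$; this is where the factors of $d$ in \eqref{eq:Delta11}--\eqref{eq:Delta13} originate. Second, the bound \eqref{eq:Delta11} is not obtained from a pointwise estimate for $J$ in some regime, but from Lemma~\ref{lem:Jq(a,K)-onaverage}, which bounds $\sum_{1\leqslant |h|\leqslant q^{1+\varepsilon}/M}J_{q_1}(\,\cdot\,h,K)$ \emph{on average} over the Poisson dual variable $h$ of the $m$-completion; your ``three regimes of the counting function'' framing misses that this averaging over $h$ is exactly where the gain $M^{-1/4}$ in \eqref{eq:Delta11} comes from, while the other two choices do use the pointwise Lemmas~\ref{lem:Jq(a,K)-Heath-Brown} and~\ref{lem:Jq(a,K)-new}.
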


We note that  around the ``diagonal'', that is, for $M = Nq^{o(1)}$ the bounds~\eqref {eq:Delta11} and~\eqref {eq:Delta12} coincide. In particular,  we note that  in the P{\'o}lya--Vinogradov  range $M,N = q^{1/2+o(1)}$ and $\|\balpha\|_\infty=q^{o(1)}$, the choice of the bound~\eqref {eq:Delta11}  in Theorem~\ref{thm:TypeI-complete-q}
yields $|\Sqa| \le q^{11/8 + o(1)}$ for $\gcd(a,q)=1$, and thus saves $1/8$ against the trivial bound
$q^{3/2+ o(1)}$, which is significantly better than the saving $1/24$ from~\cite{BFKMM17a} and $1/16$ from~\cite{Sh19}. On the other hand, the bound~\eqref {eq:Delta13} is better
than~\eqref {eq:Delta11} and~\eqref {eq:Delta12} for some skewed choices of $M$ and $N$, namely when
$$
MN^2 \ge q^{1+\delta} \mand M \ge q^{1/2+\delta}
$$
with some fixed $\delta>0$.

By virtue of the Selberg--Kuznetsov identity (see~\eqref{eq:Selberg-Kuznetsov} below) and M\"obius inversion, one may see the above three upper bounds also work for Type~I sums with $\cK_q(mn,1)$.

\begin{cor}\label{cor:TypeI-complete-q-afterSelbergKuznetsov}
Let $q$ be a positive integer and let $\cI,\cJ$ be two intervals as in~\eqref{eq:Interval-IJ}. For any $M,N\geqslant 1$, we have
$$
\sum_{m\in\cI} \sum_{n\in\cJ} \alpha_m\cK_q(mn,a)\ll \|\balpha\|_2M^{1/2}Nq^{1/2+o(1)} \Delta_1(M,N,q,1)
$$
uniformly in $a\in\Z_q^*$, where $\Delta_1(M,N,q,1)$ can be taken as in Theorem~\ref{thm:TypeI-complete-q}.
\end{cor}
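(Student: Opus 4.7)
The plan is to reduce the double sum
\[
T=\sum_{m\in\cI}\sum_{n\in\cJ}\alpha_m\cK_q(mn,a)
\]
to pieces of the Type~I shape $S_{q',a}(\balpha^{(e)})$ at divisor moduli $q'\mid q$, so that Theorem~\ref{thm:TypeI-complete-q} may be invoked on each piece. The relevant bridge between the two shapes is the Selberg--Kuznetsov identity \eqref{eq:Selberg-Kuznetsov} combined with M\"obius inversion. As a warm-up, when $\gcd(m,q)=1$ the substitution $x\mapsto m^{-1}x$ in the definition of the Kloosterman sum immediately yields $\cK_q(mn,a)=\cK_q(am,n)$, and the contribution of such $m$ is literally $S_{q,a}$ restricted to $\{m\in\cI:\gcd(m,q)=1\}$; Theorem~\ref{thm:TypeI-complete-q} with its parameter $d=\gcd(a,q)=1$ then delivers exactly the required bound.

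For the remaining contribution, group terms according to $e=\gcd(m,q)>1$. Applying \eqref{eq:Selberg-Kuznetsov} and M\"obius-inverting the divisor condition produces, for each $e\mid q$, a piece of the form
\[
T_e=\sum_{m'\in\cI_e}\sum_{n\in\cJ}\alpha^{(e)}_{m'}\cK_{q/e}(am',n),
\]
where $\cI_e$ is an interval of length at most $M/e$ (consisting of the rescaled indices $m/e$) and $\alpha^{(e)}_{m'}$ inherits $|\alpha^{(e)}_{m'}|\le|\alpha_{em'}|$, so that $\|\balpha^{(e)}\|_2\le\|\balpha\|_2$. The crucial point is that $\gcd(a,q)=1$ implies $\gcd(a,q/e)=1$ for every $e\mid q$, so Theorem~\ref{thm:TypeI-complete-q} applies at each level with its parameter ``$d$'' equal to $1$, giving
\[
|T_e|\ll\|\balpha\|_2(M/e)^{1/2}N(q/e)^{1/2+o(1)}\Delta_1(M/e,N,q/e,1).
\]

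Summing over $e\mid q$ finishes the proof. A term-by-term inspection of \eqref{eq:Delta11}--\eqref{eq:Delta13} shows that $(M/e)^{1/2}(q/e)^{1/2}\Delta_1(M/e,N,q/e,1)\ll M^{1/2}q^{1/2}\Delta_1(M,N,q,1)$ up to a multiplicative factor $e^{O(1)}$, while the outer summation over divisors of $q$ contributes only the standard divisor-function loss $\tau(q)=q^{o(1)}$, which is absorbed into the $q^{o(1)}$ in the stated bound. The main obstacle is precisely this bookkeeping step: one has to verify that \eqref{eq:Selberg-Kuznetsov} really does expand $\cK_q(mn,a)$ as a clean signed sum over $e\mid\gcd(m,q)$ of terms of the form $\cK_{q/e}(am/e,n)$, with the shape and coefficient size needed for Theorem~\ref{thm:TypeI-complete-q} to apply uniformly in $e$.
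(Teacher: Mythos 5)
Your overall strategy --- Selberg--Kuznetsov plus M\"obius inversion to reduce to sums $S_{q',a}$ at divisor moduli $q'\mid q$, then Theorem~\ref{thm:TypeI-complete-q} with $d=1$ --- is exactly the route the paper intends (the paper offers no more detail than the sentence preceding the corollary). However, the form of the inverted identity you use is not correct, and this is precisely the step you yourself flag as ``the main obstacle''. Inverting~\eqref{eq:Selberg-Kuznetsov} with respect to the factorisation $am\cdot n$ of the first argument gives
$$
\cK_q(mn,a)=\cK_q(amn,1)=\sum_{e\mid\gcd(m,n,q)}\mu(e)\,e\,\cK_{q/e}\bigl(am/e,\,n/e\bigr),
$$
where the first equality holds because $\gcd(a,q)=1$ kills all terms $d>1$ in~\eqref{eq:Selberg-Kuznetsov} applied to $\cK_q(mn,a)$, and the second is the M\"obius inversion of $\cK_{q}(u,v)=\sum_{d\mid\gcd(u,v,q)}d\,\cK_{q/d}(uv/d^{2},1)$. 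The divisor $e$ must divide \emph{both} $m$ and $n$, so in the $e$-th piece the $n$-variable is also restricted to $e\mid n$ and rescaled to an interval of length $\sim N/e$. Your $T_e$, in which $n$ runs over all of $\cJ$ unrescaled and $e$ is taken to be $\gcd(m,q)$, is not what the identity produces, and no term-by-term identity of that shape can hold: for instance $\cK_{p^{2}}(p\cdot 1,1)=0$ by Lemma~\ref{lem:Kloosterman=0}, whereas $\cK_{p}(1,1)$ is in general nonzero.

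The slip also undermines your bookkeeping. You drop the explicit weight $\mu(e)e$, and the closing claim --- each piece dominated by the target ``up to a multiplicative factor $e^{O(1)}$'', after which summing over $e\mid q$ costs only $\tau(q)=q^{o(1)}$ --- is internally inconsistent: a genuine factor $e^{c}$ with $c>0$ summed over $e\mid q$ costs $q^{c}$, not $q^{o(1)}$. With the correct decomposition everything does close. The $e$-th piece is $\mu(e)\,e\,S_{q/e,a}(\balpha^{(e)})$ with $\|\balpha^{(e)}\|_2\le\|\balpha\|_2$ and $\gcd(a,q/e)=1$; Theorem~\ref{thm:TypeI-complete-q} bounds it by $e\cdot\|\balpha\|_2(M/e)^{1/2}(N/e)(q/e)^{1/2+o(1)}\Delta_1(M/e,N/e,q/e,1)$, and a direct inspection of~\eqref{eq:Delta11}--\eqref{eq:Delta13} with $d=1$ shows $\Delta_1(M/e,N/e,q/e,1)\le e\,\Delta_1(M,N,q,1)$ (the worst term scales like $e^{1}$), which exactly cancels the prefactor $e\cdot e^{-1/2}\cdot e^{-1}\cdot e^{-1/2}=e^{-1}$. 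Hence each piece is uniformly $\ll\|\balpha\|_2M^{1/2}Nq^{1/2+o(1)}\Delta_1(M,N,q,1)$ and the sum over $e\mid q$ contributes only $\tau(q)=q^{o(1)}$. Your warm-up for $\gcd(m,q)=1$ is correct but becomes unnecessary once the inversion is set up properly.
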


%%\commX{(1.8) is not mentioned but I added (1.4)}

Figure~\ref{fig:OldNew-q} gives a plot of a polygon in the $(\mu, \nu)$-plane, with
$$
 M = q^{\mu+ o(1)} \mand N=q^{\nu + o(1)},
$$
where Theorem~\ref{thm:TypeI-complete-q}
improves the trivial bound~\eqref{eq:trivial S}, as well as~\eqref{eq:TypeI-complete-Poisson}, \eqref{eq:TypeI-complete-ShparlinskiZhang2} and~\eqref{eq:TypeI-complete-Shparlinski}.
Simple calculations show that the polygon on Figure~\ref{fig:OldNew-q} has vertices
$$
(\mu, \nu) = (0,1/2), \ (0,1), \ (1,1), \ (1,0), \ (1/2,1/4), \ (1/2,1/3), \ (2/5, 2/5).
$$

\begin{figure}[H]
\centering
 \includegraphics[scale=0.17]{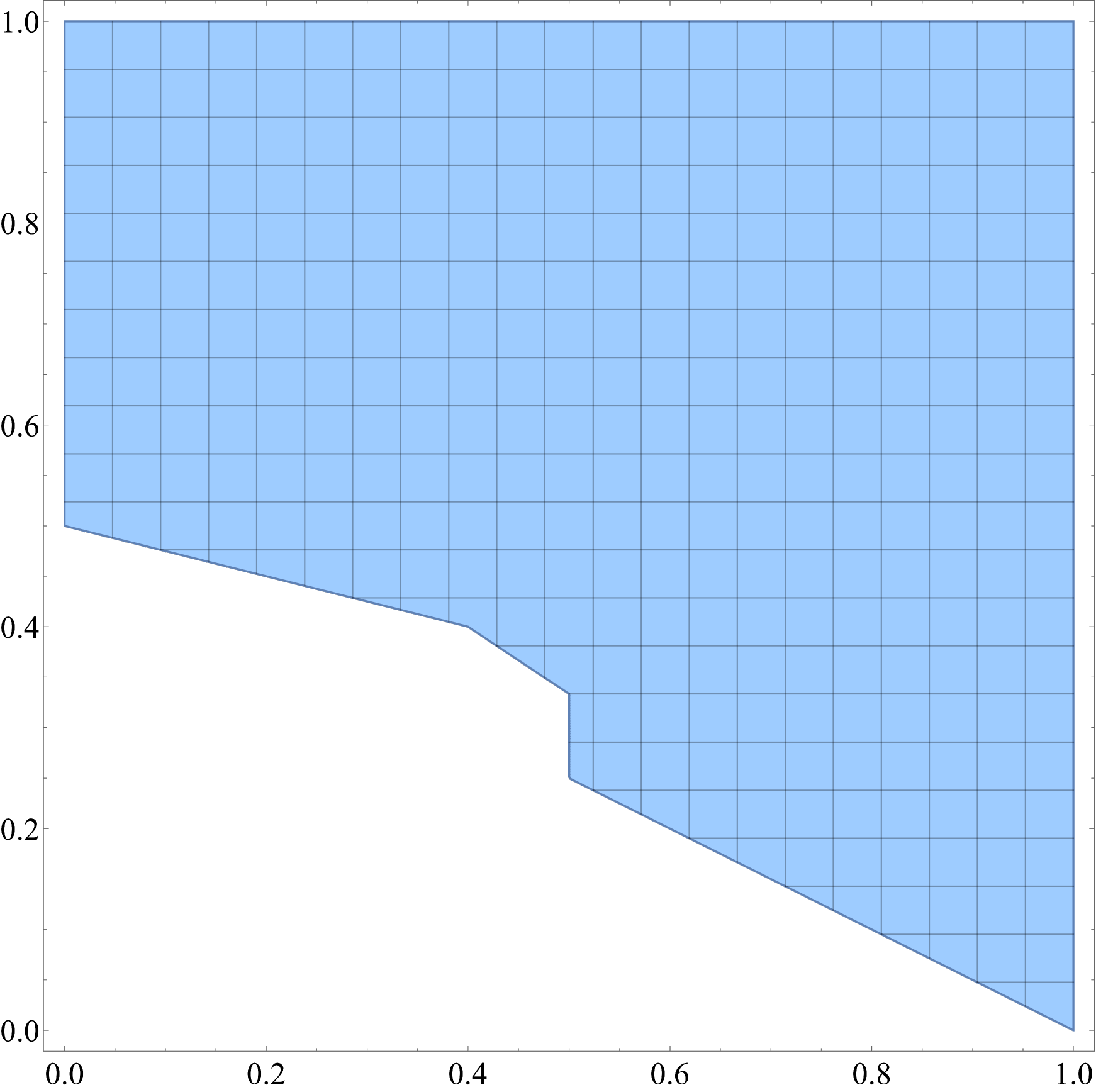}
 \caption{
The polygon where  Theorem~\ref{thm:TypeI-complete-q} wins.
}
 \label{fig:OldNew-q}
\end{figure}

\begin{rem}
Examining the proof of Theorem~\ref{thm:TypeI-complete-q} in Section~\ref{sec:TypeI-complete-proof}, one can easily see that it can be generalized identically to estimate Type~I sums of the weighted Kloosterman sum
$$
 \sum_{x \in \Z_q^*} \xi(x) \eq(m x +n \xbar)
$$
with an arbitrary complex weight $\xi$ such that $\|\xi\|_\infty \le 1$, to which the algebraic geometry method in~\cite{BFKMM17a, FKM14, KMS17, KMS20} does not apply.  In particular, our bound holds for
Sali{\'e} sums
$$
\cS_q(m,n) =  \sum_{x \in \Z_q^*} \(\frac{x}{q}\) \eq(m x +n \xbar)
$$
with all odd $q\geqslant1$, where $(\frac{\cdot}{q})$ denotes the Jacobi symbol mod $q$.
Bilinear forms with Sali{\'e} sums have been studied extensively in~\cite{DZ19,DKSZ20,KSSZ21,SSZ20,SSZ22} due to their applications to the moments
of $L$-functions of half-integral weight modular forms and to the distribution of modular square roots of primes.
In particular, the bounds in Theorem~\ref{thm:TypeI-complete-q}  applied to Sali{\' e} sums   improve~\cite[Theorem~2.2]{KSSZ21} for a wide range of parameters.
\end{rem}

\subsection{Type~II sums of incomplete Kloosterman sums}\label{sec:TypeII-incomplete-q}
The above estimates for Type~I sums $\Sq$ are essentially based on estimates for the following bilinear form with incomplete Kloosterman sums
$$
\WacIKqr=\sum_{m\in \cI}\sum_{\substack{k\in\Z_q^*\\ \langle ck\rangle_r\leqslant K}}\alpha_m \gamma_k \eq(a m \kbar),
$$
where $\bgamma$ is an arbitrary weight,
%%and we keep in mind $k\in\Z_q^*$ due to the appearance of $\kbar$
and we define $\langle n\rangle_r$ to be the (unique) integer $y\in[1,r]$ such that $n\equiv y\bmod r$. Hence a natural restriction is that $K\leqslant r$. Note that there are at most $Kq/r$ values of $k$ in the above average, thus a trivial bound could be
$$
|\WacIKqr|\leq \|\balpha\|_2 \|\bgamma\|_\infty M^{1/2}Kq r^{-1}
$$
for all $c\in\Z_r^*$ and $M,K\geqslant1$ with $K\leqslant r$.

\begin{theorem}\label{thm:TypeII-incomplete-q}
Let $q,r$ be positive integers with $r\mid q$, and $\cI$ an interval as in~\eqref{eq:Interval-IJ}.
For any $M,K\geqslant1$ with $K\leqslant r$, we have
$$
\WacIKqr\ll \|\balpha\|_2 \|\bgamma\|_\infty (M^{1/2}Kq^{1+o(1)}r^{-1} \Delta_2(M,K,q,r)
$$
uniformly in $a\in\Z_q^*$ and $c\in\Z_r^*$,
where we may take $\Delta_2(M,K,q,r)$ freely among
\begin{subequations}\begin{align}
&(Mq/r)^{-1/4}+M^{-1/2}+(Kq/r)^{-1/2},\label{eq:Delta21}\\
&M^{-1/2}(1+(K/r)^{-1/4}+K^{-1}r^{1/2})+(Kq/r)^{-1/2},\label{eq:Delta22}\\
&M^{-1/2}(1+K^{-1/2}r^{1/4}+K^{-1}r^{3/4})+(Kq/r)^{-1/2}.\label{eq:Delta23}
\end{align}
\end{subequations}
\end{theorem}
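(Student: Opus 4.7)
The plan is to reduce the bilinear form $\WacIKqr$ to the counting function $J_q(a,K)$ from~\eqref{eq:Jq(a,K)} via Cauchy--Schwarz and Poisson summation, and then substitute the three available bounds for $J_q(a,K)$ to recover the three admissible choices of $\Delta_2(M,K,q,r)$. To start, introduce a test function $\Phi$ as in Section~\ref{sec:Not} at scale $M$, dominating the indicator $\mathbf{1}_\cI$, and apply Cauchy--Schwarz in the $m$-variable to obtain
\begin{equation*}
|\WacIKqr|^2 \ll \|\balpha\|_2^2 \sum_{m\in\Z} \Phi\!\left(\frac{m-M_0-M/2}{M}\right) \Bigl|\sum_{\substack{k\in\Z_q^*\\ \langle ck\rangle_r \le K}} \gamma_k\, \eq(am\kbar)\Bigr|^2.
\end{equation*}
Opening the inner square, swapping the order of summation and applying Poisson summation to the resulting complete $m$-sum yields, via the rapid decay~\eqref{eq:Fouriertransform-decay} of $\widehat{\Phi}$, the pointwise estimate
\begin{equation*}
\sum_{m\in\Z}\Phi\!\left(\frac{m-M_0-M/2}{M}\right)\eq\bigl(am(k_1^{-1}-k_2^{-1})\bigr) \ll_A M\bigl(1+M|h|/q\bigr)^{-A},
\end{equation*}
where $h$ denotes the balanced residue of $a(k_1^{-1}-k_2^{-1})$ modulo $q$, so that only pairs $(k_1,k_2)$ with $|h|\ll q^{1+o(1)}/M$ contribute noticeably.

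Isolating the diagonal $k_1=k_2$ then produces a contribution $\|\balpha\|_2^2\|\bgamma\|_\infty^2\, M\cdot (Kq/r)$, which after taking square roots and normalising yields the common term $(Kq/r)^{-1/2}$ appearing in each of~\eqref{eq:Delta21}--\eqref{eq:Delta23}. The off-diagonal part is bounded, up to $q^{o(1)}$, by
\begin{equation*}
\|\balpha\|_2^2\|\bgamma\|_\infty^2\, M\sum_{0<|h|\ll q/M}\#\bigl\{(k_1,k_2)\in(\Z_q^*)^2:\ \langle ck_i\rangle_r\le K,\ a(k_1^{-1}-k_2^{-1})\equiv h\bmod{q}\bigr\},
\end{equation*}
and a dyadic decomposition in $|h|$ recasts this sum in terms of the counting function $J_q(a,K)$.

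The three alternative bounds for $\Delta_2$ then arise by substituting three distinct estimates for $J_q(a,K)$: a direct completion argument produces~\eqref{eq:Delta21}, while the sharper bounds~\eqref{eq:Delta22} and~\eqref{eq:Delta23} rely on the elementary point-counting estimates that the paper develops precisely to improve upon the approach of~\cite{Sh19}. The main obstacle is accordingly the careful bookkeeping of the off-diagonal: for each of the three available estimates for $J_q(a,K)$ one must verify that, after combining with the diagonal term and with the Poisson truncation $|h|\ll q/M$, the various dyadic contributions assemble precisely into the claimed expression for $\Delta_2$, with no loss beyond the harmless $q^{o(1)}$ factor absorbed into the $o(1)$ in the exponent.
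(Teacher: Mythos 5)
Your skeleton --- Cauchy--Schwarz with a smooth majorant in $m$, Poisson summation, truncation of the dual frequencies at $|h|\ll q^{1+o(1)}/M$, and separation of the diagonal $k_1=k_2$ to produce the common term $(Kq/r)^{-1/2}$ --- is exactly the paper's route. But there are two concrete gaps in how you finish. First, the counting function appearing in your off-diagonal display is not $J_q(a,K)$ from~\eqref{eq:Jq(a,K)} but the constrained variant $J_q(\cdot,K;r,c)$ of~\eqref{eq:Jq(a,K;r,c)}, in which $k_1,k_2$ range over all of $\Z_q^*$ subject to $\langle ck_i\rangle_r\le K$ rather than over $[1,K]$. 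You cannot simply ``recast this in terms of $J_q(a,K)$'': one needs Lemma~\ref{lem:Jq(a,K;r,c)}, which gives $J_q(a,K;r,c)\le (q/r)\,J_r(c^{-1}a,K)$ by fixing residues modulo $r$ and noting that the congruence then determines the lift of $k_2$. This passage to the modulus $r$ is precisely what makes the $r$-dependent terms $(K/r)^{-1/4}$, $K^{-1}r^{1/2}$, $K^{-1/2}r^{1/4}$, $K^{-1}r^{3/4}$ in~\eqref{eq:Delta22} and~\eqref{eq:Delta23} come out; without it the exponents of $r$ do not match.

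Second, your attribution of~\eqref{eq:Delta21} to ``a direct completion argument'', and more generally your framing of all three choices as ``substituting three distinct estimates for $J_q(a,K)$'', is wrong in a way that breaks the proof of the strongest bound. No pointwise bound for $J_r(\cdot,K)$, and no completion of the frequency sum to a full period, can produce the term $(Mq/r)^{-1/4}$: completing gives only $\sum_{n\bmod r}J_r(n,K)\le K^2$, whose contribution is dominated by the $M^{-1/2}$ term (this is exactly how the paper disposes of the easy case $q^{1+\varepsilon}/M>r$). The term $(Mq/r)^{-1/4}$ requires keeping the \emph{incomplete} sum over the $\approx q/M$ surviving Poisson frequencies and estimating $\sum_{1\le n\le q^{1+\varepsilon}/M}J_r\((ac)^{-1}n,K\)\le \(K^2(q/(Mr))^{1/2}+K\)q^{o(1)}$ \emph{on average}; this is Lemma~\ref{lem:Jq(a,K)-onaverage}, the Cilleruelo--Garaev-type divisor-counting argument, and it is one of the genuinely new inputs of the paper. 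Only~\eqref{eq:Delta22} and~\eqref{eq:Delta23} come from pointwise bounds (Lemmas~\ref{lem:Jq(a,K)-Heath-Brown} and~\ref{lem:Jq(a,K)-new} respectively, again applied to the modulus $r$). You also omit the case split on whether $q^{1+\varepsilon}/M\le r$, which is needed because the averaged lemma requires the frequency range not to exceed the modulus.
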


In the typical case $(r,c)=(q,1)$, for which we denote by $\WaIKq$ the corresponding 
bilinear sum,~\eqref{eq:Delta21} produces a non-trivial bound essentially in the full range $M,K\geqslant q^{\varepsilon}$, and~\eqref{eq:Delta22}  saves more than~\eqref{eq:Delta21} if $MK\gg q$. In turn,~\eqref{eq:Delta23} is  better than~\eqref{eq:Delta21}  and~\eqref{eq:Delta22} when $K$ is suitably large.

Besides applications to concluding Theorem~\ref{thm:TypeI-complete-q}, Theorem~\ref{thm:TypeII-incomplete-q} should be of independent interests and may indicate many other applications. In particular, Wu~\cite{Wu22} utilized such estimates for $\WaIKq$ to evaluate the fourth moment of Dirichlet $L$-functions with arbitrary moduli; see Section~\ref{sec: 4th Moment} for more details.

\subsection{New bilinear forms with arbitrary support}\label{sec:bilinearform-arbitraryset}
We now formulate some bilinear forms which generalize $\Sq$ or $\WaIKq$ to the case that one variable is supported on an arbitrary set. Since additive combinatorics in finite fields is employed, we now restrict our moduli to primes.

Let $\cK$ be an interval given by
\begin{equation}\label{eq:Interval-K}
\cK=\{K_0+1, \ldots, K_0+K\}
\end{equation}
for some integers $K_0$ and $K\geqslant1$. %\commI{$K_1 \to K_0$}
For each prime $p$ and an arbitrary set $\cM\subseteq\F_p$ of
cardinality $M$, define
$$
\WaMKp=\sum_{m\in\cM}\sum_{k\in\cK}\alpha_m\gamma_k\ep(am\kbar),
$$
where we also keep in mind the underlying restriction $k\in\F_p^*$.
In fact, it is contained implicitly in Shparlinski~\cite[Theorem~2.1]{Sh19} that
$$
\WaMKp\ll \|\balpha\|_\infty \|\bgamma\|_\infty  M^{3/4}(K^{7/8}p^{1/8}+K^{1/2}p^{1/4})p^{o(1)},
$$
which can also be generalized identically to the situation in $\Z_q$.

%%commB elaborated on the sentence just below
By combining the ``shift by $ab$'' trick of Vinogradov, Karatsuba and Friedlander--Iwaniec with techniques related to the Balog-Szemer\'{e}di-Gowers theorem, we obtain the following result.

\begin{theorem}\label{thm:TypeII-incomplete-arbitraryset1}
Let $p$ be a prime and let $r\in\Z^+$  be fixed. For an arbitrary set $\cM\subseteq\F_p$ of
cardinality $M$ and any interval $\cK\subseteq\F_p^*$ as in~\eqref{eq:Interval-K} of length  $K\geqslant p^{1/r}$, we have
$$
\WaMKp \ll\|\balpha\|_\infty \|\bgamma\|_\infty  MKp^{o(1)}
\(\frac{1}{M}+ \frac{p^{1+1/r}}{MK^2}\)^{7/24r}
$$
uniformly in $a\in \F_p^*$.
\end{theorem}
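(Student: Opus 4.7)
The plan is to combine the Vinogradov--Karatsuba--Friedlander--Iwaniec ``shift by $ab$'' trick with the Balog--Szemer\'edi--Gowers (BSG) theorem and an iteration of Bourgain--Glibichuk--Konyagin type sum-product estimates over $\F_p$. Since $\cM$ is arbitrary, Fourier analysis on the $m$-variable (as used, for instance, in the proof of Theorem~\ref{thm:TypeI-complete-q}) is unavailable; the role of additive combinatorics is to extract structural information about $\cM$ from a large value of the bilinear form, which then conflicts with the multiplicative shape of the inverses of the interval $\cK$.

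First, I apply Cauchy--Schwarz in the $k$-variable and expand the square to reach
$$
|\WaMKp|^2 \leq K \|\bgamma\|_\infty^2 \|\balpha\|_\infty^2 \sum_{d\in\cM-\cM} r_\cM(d)\, |T(ad)|,
$$
where $r_\cM(d) = \#\{(m_1,m_2)\in\cM^2 : m_1-m_2 = d\}$ and $T(x) = \sum_{k\in\cK} \ep(x\kbar)$. The diagonal $d=0$ yields $K^2M$, a harmless contribution; the substantive task is the off-diagonal part. To handle it, I apply the ``shift by $ab$'' trick, replacing the argument $ad$ by $a(d+bh)$ for $b, h$ running over short intervals and averaging over them. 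This amplifies the sum and introduces two new multiplicative variables that can be played against the interval structure of $\cK$.

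A further Cauchy--Schwarz then controls the off-diagonal by a quantity involving the additive energy $E_+(\cM)$ together with a mixed additive/multiplicative energy of the set of inverses of $\cK$. Because $\cM$ carries no a priori structure, I invoke BSG: whenever $E_+(\cM) \gtrsim M^3 p^{-o(1)}$, one may extract $\cM' \subseteq \cM$ of size $|\cM'| \gg M p^{-o(1)}$ and small doubling $|\cM' + \cM'| \ll |\cM'| p^{o(1)}$. Pl\"unnecke--Ruzsa inequalities then bound the iterated sumsets $k\cM'$ and $\cM'-\cM'+\cdots$ that appear at the next stage.

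Finally, the set of inverses of an interval of length $K\geq p^{1/r}$ is multiplicatively an interval but additively unstructured, so the Bourgain--Glibichuk--Konyagin sum-product estimate applies. Iterating the sum-product step $r$ times, with the hypothesis $K\geq p^{1/r}$ providing exactly the room needed for each iteration to avoid degeneration, produces the claimed saving factor $(1/M + p^{1+1/r}/(MK^2))^{7/24r}$. The principal technical obstacle is the careful bookkeeping of exponent losses through BSG and Pl\"unnecke--Ruzsa, balanced against the per-iteration gain of the sum-product estimate; the specific exponent $7/24r$ reflects the precise balance between these competing contributions.
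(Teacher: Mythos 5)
Your opening move (Cauchy--Schwarz in $k$, reducing to $\sum_{m_1,m_2}|\sum_{k\in\cK}\ep(a(m_1-m_2)k^{-1})|$) matches the paper, but from there your route diverges in ways that will not deliver the stated exponent. The paper does \emph{not} use the Balog--Szemer\'edi--Gowers theorem, Pl\"unnecke--Ruzsa, or any sum-product iteration. Instead it applies the Rudnev--Shkredov--Stevens energy decomposition (Lemma~\ref{lem:RSS-Iter}, iterating~\cite[Theorem~2.15]{RSS20}) with a free parameter $L$: this splits $\cM$ into one piece $\cA_0$ with $E(\cA_0)\ll M^3/L$ plus $O(L)$ pieces each of whose elements has $\gg M/L$ representations as $a-d$ with $a\in\cM$, $d\in\cD_j$, $\#\cD_j\ll LM$. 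The low-energy piece is handled by feeding $E(\cA_0)\ll M^3/L$ into a second-moment bound for the counting function $R(\lambda,\mu)$ via the Ayyad--Cochrane--Zheng estimate (Lemmas~\ref{lem:product-ACZKerr} and~\ref{lem:product-energy-ACZKerr}); the popular-difference pieces are handled by a further Cauchy--Schwarz splitting into sums over $\cM$ and over $\cD_j$, each attacked by the same machinery with the trivial energy bounds. Optimizing $L$ produces $7/24r$. Your branch ``$E(\cM)$ large $\Rightarrow$ BSG $\Rightarrow$ small doubling $\Rightarrow$ sum-product'' is essentially Bourgain's approach~\cite{Bo05}, which the paper explicitly contrasts with its own: that route yields only savings of the shape $p^{-c\sqrt{\varepsilon}}$ and cannot produce an explicit power saving like $\delta^{7/24r}$; moreover you do not explain how the small-doubling conclusion is converted back into a bound on the bilinear form, which is the entire difficulty of that branch.

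Two further concrete problems. First, you apply the ``shift by $ab$'' to the difference $d=m_1-m_2$, writing $a(d+bh)$; but $d$ ranges over $\cM-\cM$, which has no interval structure, so the shifted sum is not comparable to the original. The shift must be applied to the interval variable: $k\mapsto k+uv$ with $uv\leqslant K$, exploiting that $\cK$ is an interval so the shifted $k$ stays in an interval $\cH$ of length $\ll K$. Second, the parameter $r$ does not index a sum-product iteration: it is a H\"older exponent. After the shift one bounds the $2r$-th moment $\sum_{\lambda,\mu}|\sum_{u\sim U}\e(\xi u)\ep(\lambda(\mu+u)^{-1})|^{2r}\ll p(U^rp+U^{2r})$ by counting solutions of $\sum_{i\leqslant 2r}(-1)^i(\mu+u_i)^{-1}=0$ (diagonal tuples give $U^rp$; non-degenerate tuples fix $\mu$ up to $O(1)$ choices). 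The choice $U=p^{1/r}$ balances the two terms, and $K\geqslant p^{1/r}$ is needed only to ensure $V=K/U\geqslant 1$ --- not to ``provide room for $r$ iterations.'' Without this Karatsuba-type moment computation and the RSS decomposition, the exponent $7/24r$ cannot be reached.
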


We now present another variant of Theorem~\ref{thm:TypeII-incomplete-arbitraryset1}.

\begin{theorem}\label{thm:TypeII-incomplete-arbitraryset2}
Let $p$ be a prime and let $r\in\Z^+$ be fixed. For an arbitrary set $\cM\subseteq\F_p$ of
cardinality $M\leqslant p^{1/2}$ and any interval $\cK\subseteq\F_p^*$ as in~\eqref{eq:Interval-K} of length  $K\geqslant p^{1/r}$, we have
$$
\WaMKp \ll \|\balpha\|_\infty \|\bgamma\|_\infty  MKp^{o(1)}
\(\frac{1}{M^{2r}}+\frac{1}{K}+\frac{p^{1+1/r}}{M^{10/13}K^2}\)^{1/4r}
$$
uniformly in $a\in \F_p^*$.
\end{theorem}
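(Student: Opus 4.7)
The plan is to mimic the shift-by-$ab$ strategy of Theorem~\ref{thm:TypeII-incomplete-arbitraryset1}, but in the range $M\le p^{1/2}$ replace the Balog--Szemer\'edi--Gowers type step with a sharper additive-energy bound of Shkredov type, which becomes available precisely when the auxiliary set has cardinality at most $p^{1/2}$. For auxiliary scales $A,B$ to be optimised later, I would start from the Vinogradov--Karatsuba--Friedlander--Iwaniec identity
$$
\WaMKp=\frac{1}{AB}\sum_{\substack{a'\sim A\\ b\sim B}}\sum_{m\in\cM}\sum_{k}\alpha_m\gamma_{k+a'b}\ep\bigl(am(k+a'b)^{-1}\bigr)+O(\text{slippage}),
$$
with the inner sum taken over $k\in\cK-a'b$ and the endpoint slippage absorbed in the $p^{o(1)}$ factor.

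A Cauchy--Schwarz in $m$ removes $\alpha_m$ at the cost of $\|\balpha\|_\infty M^{1/2}$. I would then extend the outer variable from $\cM$ to all of $\F_p$ and apply H\"older of order $r$ to the inner sum, raising it to its $2r$-th power. Carrying out the orthogonality relation $\sum_{m\in\F_p}\ep(m\lambda)=p\cdot\mathbf 1_{\lambda\equiv 0}$ reduces the task to counting $2r$-tuples $(k_i,b_i)$ with
\begin{equation*}
\sum_{i=1}^r(k_i+a'b_i)^{-1}\equiv\sum_{i=r+1}^{2r}(k_i+a'b_i)^{-1}\pmod p,
\end{equation*}
averaged over $a'\sim A$. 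The ``pair-diagonal'' contribution, where the $k_i+a'b_i$ collapse pairwise, yields the $M^{-2r}$ term; a standard completion of the incomplete sum over $\cK$ (using the Fourier transform decay~\eqref{eq:Fouriertransform-decay}) gives the intermediate $K^{-1}$ term; and the main off-diagonal piece is controlled by the $r$-fold additive energy $E_r^+(\mathcal A)$ of the inverted set $\mathcal A=\{(k+a'b)^{-1}:k\in\cK,\,b\sim B\}$. Under the hypothesis $M\le p^{1/2}$ and a dyadic pigeonhole in $a'$, one may arrange $|\mathcal A|\le p^{1/2}$, at which point a Shkredov-type estimate of the form $E_r^+(\mathcal A)\ll |\mathcal A|^{2r-10/13+o(1)}$ applies. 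Inserting this bound and taking $2r$-th roots (with an extra square root coming from the initial Cauchy--Schwarz) produces the advertised exponent $1/(4r)$ and the main term $p^{1+1/r}/(M^{10/13}K^2)$.

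The main obstacle will be justifying that the Shkredov-type additive-energy bound applies with the specific \emph{bilinear} set $\mathcal A=\{(k+a'b)^{-1}\}$, which is not a pure multiplicative dilate of an interval but a perturbation parametrised by $a',b$. Either one first fixes $a'$ through a dyadic pigeonhole and works with a genuine shifted interval before inversion --- the regime where Shkredov's argument applies most cleanly --- or one establishes a direct energy estimate for this bilinear family. A secondary subtlety lies in reconciling the lower bound $K\geq p^{1/r}$ with the two independent H\"older/Cauchy--Schwarz passes, so that the three terms inside the parenthesis appear simultaneously and the exponent $10/13$ survives the final optimisation of $A$ and $B$.
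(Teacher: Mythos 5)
Your proposal takes the Cauchy--Schwarz in the wrong variable, and this is fatal to the stated bound. The paper squares out in $k$ (eliminating $\gamma_k$), so that the resulting count retains the differences $m_1-m_2$ with $m_1,m_2\in\cM$: after the ``shift by $ab$'' one is led to the representation function $R(\lambda,\mu)$ of~\eqref{eq:R(lambda,mu)} (with $\cA_0$ replaced by $\cM$), and the exponent $10/13$ in the third term is precisely the Macourt--Petridis--Shkredov--Shparlinski saving $D_p(\cM)\ll M^{84/13}p^{o(1)}$ of Lemma~\ref{lem:MPSS}, fed through Lemma~\ref{lem:MPSS-product-ACZKerr-mixed} into the second moment of $R$ restricted to $\lambda\ne 0$ (the main term $A^2C^4/p$ of that lemma produces the $1/K$ term, the error term $AC^{42/13}$ produces $p^{1+1/r}/(M^{10/13}K^2)$, and the excised $\lambda=0$ diagonal produces $M^{-2r}$). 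In your scheme you Cauchy--Schwarz in $m$, extend $m$ to all of $\F_p$, and apply orthogonality; at that point $\cM$ has vanished from the problem --- the congruence $\sum_{i\le r}(k_i+a'b_i)^{-1}\equiv\sum_{i>r}(k_i+a'b_i)^{-1}$ you arrive at involves only $k_i$ and $b_i$ --- so no power of $M$ beyond the $\|\balpha\|_2\le\|\balpha\|_\infty M^{1/2}$ already spent can reappear. Your assertions that the pair-diagonal yields $M^{-2r}$ and that the off-diagonal yields $M^{-10/13}$ are therefore unsupported: the quantities you are counting simply do not see $M$.

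Two further points. First, the hypothesis $M\le p^{1/2}$ is needed so that Lemma~\ref{lem:MPSS} applies to $\cM$ itself; it has nothing to do with the inverted set $\{(k+a'b)^{-1}\}$, whose cardinality is of order $KB$ and typically far exceeds $p^{1/2}$. Second, the ``Shkredov-type estimate $E_r^+(\cA)\ll|\cA|^{2r-10/13+o(1)}$'' you invoke is not a known result in this generality and is not what drives the theorem; the only higher-moment input in the paper is the elementary count of solutions to $\sum_{i}(-1)^i(\mu+u_i)^{-1}=0$ with $u_i\sim U$, which bounds the $2r$-th moment of the auxiliary sum $\sum_{u\sim U}\e(\xi u)\ep(\lambda(\mu+u)^{-1})$ over all $(\lambda,\mu)\in\F_p^2$ by $p(U^rp+U^{2r})$, and this is combined with the first and second moments of $R(\lambda,\mu)$ by H\"older with weights $(1-1/r,\,1/2r,\,1/2r)$. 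To repair your argument you would need to restructure it so that the $m$-variable survives the first Cauchy--Schwarz; as written, the approach cannot yield the claimed estimate.
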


The upper bounds for $\Sqa$ benefit from those for $\WacIKqr$ as mentioned above. Following this spirit, one can imagine Theorems~\ref{thm:TypeII-incomplete-arbitraryset1} and~\ref{thm:TypeII-incomplete-arbitraryset2} can be utilized to study the following bilinear form of Kloosterman sums
\begin{equation}\label{eq:Type-I-GenSet}
\SMp=\sum_{m\in\cM} \sum_{n\in\cJ}\alpha_m\cK_p(m,n),
\end{equation}
where $\cM\subseteq \F_p$ is an arbitrary set and $\cJ$ is an interval given by~\eqref{eq:Interval-IJ}. The sums $\SMp$ have also been estimated in~\cite[Theorem~2.4]{BaSh20} (also with higher dimensional Kloosterman sums). Our new bounds are stronger and extends the
range in which nontrivial bounds of such sums are available.

\begin{theorem}\label{thm:TypeI-complete-arbitraryset1}
Let $p$ be a prime and let $r\in\Z^+$ be fixed.
For an arbitrary set $\cM\subseteq\F_p$ of
cardinality $M$ and any interval $\cJ\subseteq\F_p^*$ as in~\eqref{eq:Interval-IJ} of length  $N\leqslant p^{1-1/r}$, we have
$$
\SMp \ll \|\balpha\|_\infty   MNp^{1/2+o(1)} \(\frac{p^{1/2}}{M^{7/24r}N}+\frac{p^{1/2-7(r-1)/24r^2}}{M^{7/24r}N^{1-7/12r}}\).
$$
\end{theorem}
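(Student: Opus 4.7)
The plan is to reduce $\SMp$ to a bilinear form of the shape $\WaMKp$, for which Theorem~\ref{thm:TypeII-incomplete-arbitraryset1} already provides a nontrivial bound. Opening the Kloosterman sum as $\cK_p(m,n) = \sum_{x \in \F_p^*} \ep(mx + n\bar x)$ and interchanging the order of summation yields
\[
\SMp = \sum_{x \in \F_p^*} \sum_{m \in \cM} \alpha_m \, \ep(mx) \, G(\cJ, \bar x),
\]
where $G(\cJ, y) := \sum_{n \in \cJ} \ep(yn)$ is a geometric sum of length $N$. The substitution $u = \bar x$, a bijection of $\F_p^*$, then produces the clean identity
\[
\SMp = \sum_{u \in \F_p^*} G(\cJ, u) \, T(\cM, u), \qquad T(\cM, u) := \sum_{m \in \cM} \alpha_m \, \ep(m \bar u).
\]
Restricted to $u$ in any interval, this is exactly a bilinear form of the shape $\WaMKp$ with $a = 1$ and weights $\gamma_u = G(\cJ, u)$.

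Next I split the range of $u$ dyadically by $|u| \sim H$, where $|u|$ denotes the distance of $u$ from $0$ in $\Z/p\Z$ and $H$ runs over dyadic values up to $p/2$; positive and negative residues are treated as two separate intervals of the form~\eqref{eq:Interval-K}. Using the standard bound $|G(\cJ, u)| \ll \min(N, p/|u|)$, each dyadic piece is a bilinear form of length $K \sim H$ with $\|\bgamma\|_\infty \ll \min(N, p/H)$. For scales $H \ge p^{1/r}$ I apply Theorem~\ref{thm:TypeII-incomplete-arbitraryset1} directly. The contribution from scales $H < p^{1/r}$ is handled by pooling all of them into a single interval of length $\asymp p^{1/r}$ (one for each sign) and invoking Theorem~\ref{thm:TypeII-incomplete-arbitraryset1} once more; the assumption $N \le p^{1-1/r}$ of the theorem being proved turns out to be precisely what keeps this short-$u$ block under the target.

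It remains to optimise over $H$. Two transition scales matter: $H \asymp p/N$, where $\min(N, p/H)$ changes regime, and $H \asymp p^{(r+1)/(2r)}$, where the two summands inside $\bigl(M^{-1} + p^{1+1/r}/(MH^2)\bigr)^{7/24r}$ in Theorem~\ref{thm:TypeII-incomplete-arbitraryset1} swap dominance. A routine case analysis identifies two extremal dyadic pieces. Scales $H$ beyond $\max(p/N, p^{(r+1)/(2r)})$ yield the uniform bound $\|\balpha\|_\infty M^{1-7/24r} p^{1+o(1)}$, which recovers the first summand of the claimed bound after factoring out $MNp^{1/2}$. Scales $H \asymp p/N$ yield $\|\balpha\|_\infty M^{1-7/24r} N^{7/12r} p^{1-7(r-1)/(24r^2)+o(1)}$, matching the second summand. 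The $O(\log p)$ dyadic values of $H$ merge into $p^{o(1)}$. The main technical obstacle I anticipate is verifying that the pooled short-$u$ block, in which the hypothesis $K \ge p^{1/r}$ of Theorem~\ref{thm:TypeII-incomplete-arbitraryset1} is only barely satisfied, is genuinely dominated by the stated bound; a short calculation shows that this hinges on equality being achieved in the standing restriction $N \le p^{1-1/r}$.
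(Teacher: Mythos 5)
Your argument is correct and follows essentially the same route as the paper: both open the Kloosterman sum, invert the summation variable so that the $n$-sum becomes a geometric-series weight, decompose the dual variable into short intervals according to its distance from $0$ modulo $p$, and apply Theorem~\ref{thm:TypeII-incomplete-arbitraryset1} to each block. The only (immaterial) difference is organizational: the paper uses $e$-adic blocks starting at length $p/N$, so that the hypothesis $K\geqslant p^{1/r}$ holds automatically from $N\leqslant p^{1-1/r}$, whereas you decompose dyadically at all scales and pool those below $p^{1/r}$ into one block --- your check that this pooled block is dominated by the second term of the target is the same use of $N\leqslant p^{1-1/r}$ in disguise.
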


Slightly modifying the argument of the proof of
Theorem~\ref{thm:TypeI-complete-arbitraryset1} we also obtain the following bound.

\begin{theorem}\label{thm:TypeI-complete-arbitraryset2}
With the notation as in Theorem~\ref{thm:TypeI-complete-arbitraryset1} and assuming $M\leqslant p^{1/2}$, we have
\begin{align*}
\SMp \ll \|\balpha\|_\infty  M&Np^{1/2+o(1)} \\
& \times \(\frac{p^{1/2}}{M^{1/2}N}+\frac{ p^{1/2-1/4r}}{N^{1-1/4r}}+\frac{p^{1/2-(r-1)/4r^2}}{M^{13/40r}N^{1-1/2r}}\).
\end{align*}
\end{theorem}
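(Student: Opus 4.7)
The approach will be a direct adaptation of the proof of Theorem~\ref{thm:TypeI-complete-arbitraryset1}, with the crucial change being that Theorem~\ref{thm:TypeII-incomplete-arbitraryset2} is invoked in place of Theorem~\ref{thm:TypeII-incomplete-arbitraryset1} at the final step. This substitution is responsible for both the extra hypothesis $M \leq p^{1/2}$ and for the altered exponent of $M$ appearing in the third summand of the stated bound.

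First, expanding $\cK_p(m,n)$ by its definition and interchanging summations leads to
\[
\SMp = \sum_{x \in \F_p^*} \sum_{m \in \cM} \alpha_m \ep(mx) \sum_{n \in \cJ} \ep(n \bar x).
\]
After the change of variables $k = \bar x$, this becomes
\[
\SMp = \sum_{k \in \F_p^*} T(k) \sum_{m \in \cM} \alpha_m \, \ep(m \bar k), \qquad T(k) = \sum_{n \in \cJ} \ep(nk),
\]
and the classical geometric-sum estimate gives $|T(k)| \ll \min(N, p/|k|)$, where $k$ is identified with its representative of smallest absolute value in $\{-(p-1)/2, \ldots, (p-1)/2\} \setminus \{0\}$. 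A dyadic decomposition of the range of $|k|$ then splits the sum into $O(\log p)$ pieces: on the block $|k| \asymp K$, which is a union of at most two intervals in $\F_p^*$ of length $O(K)$, one has $|T(k)| \ll \min(N, p/K)$, and the contribution of that block has precisely the shape of the bilinear form $\WaMKp$ bounded by Theorem~\ref{thm:TypeII-incomplete-arbitraryset2}.

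Next, I would apply Theorem~\ref{thm:TypeII-incomplete-arbitraryset2} to each dyadic block with $K \geq p^{1/r}$, using the weight $\gamma_k = T(k)$ so that $\|\bgamma\|_\infty \ll \min(N, p/K)$. The few scales with $K < p^{1/r}$ are handled trivially, and the hypothesis $N \leq p^{1-1/r}$ guarantees that the natural balancing scale $K \asymp p/N$ lies in the admissible range. Applying the subadditivity inequality $(a+b+c)^{1/4r} \leq a^{1/4r} + b^{1/4r} + c^{1/4r}$ to the bound from Theorem~\ref{thm:TypeII-incomplete-arbitraryset2} produces three contributions at each scale; after absorbing the $O(\log p)$ loss from the dyadic summation into $p^{o(1)}$, each contribution is maximized near $K \asymp p/N$, where the effective count $\min(N, p/K) \cdot K$ reaches its maximum value $\asymp p$. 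Substituting this choice and simplifying should then yield the three terms of the stated bound.

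The main technical matter, as in the proof of Theorem~\ref{thm:TypeI-complete-arbitraryset1}, lies in the bookkeeping of the optimization: one must verify that each of the three terms produced by Theorem~\ref{thm:TypeII-incomplete-arbitraryset2} is indeed essentially unimodal in the dyadic parameter $K$ with maximum at $K \asymp p/N$, and check carefully how the constraints $K \geq p^{1/r}$ and $M \leq p^{1/2}$ interact at the boundary of the admissible range. I do not expect any substantive obstacle beyond this routine, if somewhat delicate, balancing, since the logical skeleton of the argument is identical to that used for Theorem~\ref{thm:TypeI-complete-arbitraryset1}.
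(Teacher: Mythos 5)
Your overall route is the same as the paper's: the paper proves this result by repeating the proof of Theorem~\ref{thm:TypeI-complete-arbitraryset1} with Theorem~\ref{thm:TypeII-incomplete-arbitraryset2} in place of Theorem~\ref{thm:TypeII-incomplete-arbitraryset1}, and that proof is exactly your reduction: open the Kloosterman sum, sum the geometric progression over $n\in\cJ$, and split the resulting weight into blocks on which it has size $\ll e^{-i}N$ while the frequency variable runs over an interval of length $\asymp e^{i}p/N$, each block being a sum of the form $\WaMKp$.

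There is, however, one step that fails as written: the claim that ``the few scales with $K<p^{1/r}$ are handled trivially''. The trivial bound on such a block is $M\cdot K\cdot N$, so summed over dyadic $K<p^{1/r}$ these scales contribute $\ll MNp^{1/r+o(1)}$, which is \emph{not} dominated by the right-hand side of the theorem: for $r=2$ and $M=N=p^{1/2}$ the stated bound is $p^{237/160+o(1)}$ while $MNp^{1/r}=p^{3/2}$. The hypothesis $N\le p^{1-1/r}$ is there precisely so that no inadmissible scale occurs: since $|T(k)|\ll N$ is flat for $|k|\le p/N$, all such $k$ must be collected into a \emph{single} interval of length $\asymp p/N\ge p^{1/r}$ (this is the role of the set $\cX_0$ in~\eqref{eq:Xi,pm}), to which Theorem~\ref{thm:TypeII-incomplete-arbitraryset2} is then applied as well; the dyadic scales then run only over $K\ge p/N$. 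With that correction your argument matches the paper's. One further caution about the ``routine'' bookkeeping you defer: at $K=p/N$ with $\|\bgamma\|_\infty\ll N$, the term $p^{1+1/r}M^{-10/13}K^{-2}$ of Theorem~\ref{thm:TypeII-incomplete-arbitraryset2} contributes the factor $\left(N^{2}M^{-10/13}p^{-(1-1/r)}\right)^{1/4r}$, that is $M^{-5/26r}$ and not the $M^{-13/40r}$ appearing in the statement (note $(M^{10/13})^{1/4r}=M^{5/26r}$). This mismatch is between the paper's Theorems~\ref{thm:TypeII-incomplete-arbitraryset2} and~\ref{thm:TypeI-complete-arbitraryset2} themselves rather than something you introduce, but it does mean the deferred computation would not land on the exponent as printed.
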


Hence we beat the P{\'o}lya--Vinogradov barrier in the following sense.

\begin{cor}\label{coro:TypeI-complete-arbitraryset}
Let $\cM\subseteq\F_p$ be a set of
cardinality $M$ and let $\cJ\subseteq\F_p^*$ be an interval as in~\eqref{eq:Interval-IJ} of length $N$.
Then for any $\varepsilon>0$, there exist some $\delta,\eta > 0$ such that
$$
\SMp\ll\|\balpha\|_\infty  MNp^{1/2-\eta},
$$
provided that $M>p^\varepsilon$ and $N>p^{1/2-\delta}$.
\end{cor}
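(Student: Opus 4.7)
The plan is to invoke Theorem~\ref{thm:TypeI-complete-arbitraryset1} with a fixed integer $r = r(\varepsilon)$ chosen large enough in terms of $\varepsilon$, and to check that, under the hypotheses $M > p^\varepsilon$ and $N > p^{1/2-\delta}$ with $\delta$ sufficiently small, every term in $\Delta$ is at most $p^{-\eta}$ for some $\eta>0$ independent of $p$. Given $\varepsilon>0$, I would take $r := \lceil 2/\varepsilon\rceil$. The crucial positive quantity driving the saving is
\[
\frac{7\varepsilon}{24r} - \frac{7}{24r^2} \;=\; \frac{7}{24r}\left(\varepsilon - \frac{1}{r}\right) \;\geq\; \frac{7\varepsilon}{48r} \;>\; 0,
\]
which is precisely the combination that survives after substituting the lower bounds on $M$ and $N$ into the more delicate second term of $\Delta$.

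In the main range $N \leq p^{1-1/r}$, I would apply Theorem~\ref{thm:TypeI-complete-arbitraryset1} directly. Substituting $M > p^\varepsilon$ and $N > p^{1/2-\delta}$ into the first term of~$\Delta$ gives a factor at most $p^{\delta - 7\varepsilon/(24r)}$; substituting into the second term and simplifying $-7(r-1)/(24r^2) + 7/(24r)(1-(1/2-\delta)(1-7/12r) \text{-derivative})$ reduces it to $p^{7/(24r^2) - 7\varepsilon/(24r) + c_r \delta}$ for some explicit $c_r>0$ depending only on $r$. By the displayed inequality, both exponents are strictly negative once $\delta$ is smaller than a threshold depending only on $\varepsilon$ and $r$. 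One can then take, for example, $\eta = \varepsilon^2/200$ and $\delta$ a small enough multiple of $\varepsilon^2$, and the required bound $\SMp \ll \|\balpha\|_\infty MN p^{1/2-\eta}$ follows.

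The only remaining case is $N > p^{1-1/r}$, which lies outside the validity of Theorem~\ref{thm:TypeI-complete-arbitraryset1}. Here I would partition $\cJ$ into $O(p^{1/r})$ consecutive subintervals, each of length at most $L := \lceil p^{1-1/r}\rceil$, apply Theorem~\ref{thm:TypeI-complete-arbitraryset1} to each piece with length parameter $L$, and sum. The total is $\|\balpha\|_\infty MN p^{1/2+o(1)}\Delta(L)$, and a direct check shows that, as a power of $p$, $\Delta(L)$ with $L = p^{1-1/r}$ is in fact strictly smaller than the bound on $\Delta$ established in the main range, so the saving is preserved. Alternatively, in the subregime $M \leq p^{1/2}$ one can substitute Theorem~\ref{thm:TypeI-complete-arbitraryset2}, whose first term $p^{1/2}/(M^{1/2}N)$ already beats the trivial bound by $p^{\varepsilon/2 - \delta}$ and whose remaining terms are handled by the analogous algebra based on $13\varepsilon/(40r) - 1/(4r^2) > 0$.

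The main obstacle I anticipate is purely a matter of bookkeeping: verifying that a single triple $(r,\delta,\eta)$ simultaneously kills every term of $\Delta$ across all admissible $(M,N)$, without any of them absorbing the saving. This ultimately reduces to the elementary inequality $\varepsilon > 1/r$, which is exactly what allows the method to succeed as soon as $M$ is permitted to exceed an arbitrarily small power $p^\varepsilon$ and explains why Theorems~\ref{thm:TypeI-complete-arbitraryset1}--\ref{thm:TypeI-complete-arbitraryset2} break the P{\'o}lya--Vinogradov barrier for $N$ marginally below $p^{1/2}$.
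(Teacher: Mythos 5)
Your proposal is correct and follows exactly the route the paper intends: the corollary is stated as an immediate consequence of Theorem~\ref{thm:TypeI-complete-arbitraryset1}, and your choice $r=\lceil 2/\varepsilon\rceil$ together with the key inequality $\varepsilon>1/r$ is precisely the verification the paper leaves implicit. Your additional handling of the regime $N>p^{1-1/r}$ by subdividing $\cJ$ is a sensible (and strictly speaking necessary) completion that the paper glosses over; aside from one garbled intermediate expression in your second paragraph, the exponent bookkeeping is sound.
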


\subsection{General comments about the methods used}
\label{sec:methods}
In closing, we condense two new ingredients in this paper.

On one hand, all starting points have roots in analyzing the Type~II sum $\WacIKqr$, for which we reduce the problem to bounding $J_q(a,K)$ from above, individually and on average (see Section~\ref{sec:Jq(a,K)} for details). Regarding the averaged bound for $J_q(a,K)$, our tools include a variant of counting device due to Heath-Brown~\cite{HB78,HB86} and Cilleruelo and Garaev~\cite{CG11}. A new individual bound for $J_q(a,K)$ is based on squareroot cancellations among the products of two Kloosterman sums; see $T(x,t,z;q)$ as defined by~\eqref{eq:exponentialsum-T} and Lemma~\ref{lem:T-upperbound-q} for an upper bound.
These treatments of $J_q(a,K)$ form a vital part in this paper, and the relevant bounds should admit many applications on other occasions. We note that as in~\cite[Theorem~2.2]{Sh19} our method allows to obtain
stronger bounds for almost all $q$. In fact the first application of bounds on $J_q(a,K)$ to estimating bilinear forms with incomplete Kloosterman sums has been given by Heath-Brown~\cite[Section~3]{HB78}. In turn,  such estimates  have found a new application in the very recent work of Zhao~\cite{Zh22} on products of primes in arithmetic progressions.

On the other hand, we employ a recent result by Rudnev, Shkredov and Stevens~\cite{RSS20} on the decomposition of subsets of $\F_p$, with which we combine the ``shift by $ab$" trick of Vinogradov, Karatsuba and Friedlander--Iwaniec (see, for example~\cite{FI93,FM98,KMS17}) to study the new bilinear forms $\WaMKp$ and $\SMp$. Additive combinatorics then enters the picture and all details can be found in Section~\ref{sec:newbilinearforms-proofs}.
 Here we should also mention a pioneering work by Bourgain~\cite{Bo05}, who has employed tools from additive combinatorics (sum-product estimates) to study bilinear forms with incomplete Kloosterman sums, as well as a very recent work by Shkredov~\cite{Sh21} concerning bilinear forms with complete Kloosterman sums.

 Comparing with algebro-geometric methods, additive combinatorics could be an alternative approach, which sometimes can also provide very strong estimates. Note that although the results of Shkredov~\cite{Sh21}  do not seem to improve the best known estimates, there is certainly a strong potential for further  improvements. More precisely, the method of Shkredov~\cite{Sh21}
is based on the so-called  incidence bounds for hyperbolas. 
It is possible that the recent progress on such bounds due to Rudnev and Wheeler~\cite{RW22} can be used to improve the results within this approach. On the other hand, due to the nature
%%of this approach,
of the underlying technique, 
 it is not likely to work in residue rings modulo a composite number.

\subsection{Possible generalisations within our method}
\label{sec:gen}
To show its flexibility and without getting into details, we
note that at the cost of only simple typographical changes our bounds on the sums
$S_{p}(\balpha)$ extends (without any changes to Type~I bilinear forms
$$
\sum_{m \in \cI} \sum_{n \in \cJ} \alpha_m  \cK_q(\bxi, m,an) ,\qquad a \in \Z_q,
$$
 with more general sums
$$
\cK_q(\bxi, m,n) = \sum_{x \in \Z_q^*} \xi_x \eq(m x +n \xbar)
$$
with arbitrary complex weights $\bxi= \{\xi_x\}$, supported on $\Z_q^*$ and satisfying $\|\bxi\| \le 1$.

Another generalisation can involve the sum with more general binomials in the exponents,
that is, with $m x^s +n x^{-t}$ with some integers $s, t\in\Z$ instead of $mx +n \xbar$ as in~\cite{LSZ18b}. Such sums (with $(s,t) = (1, -2)$) appear in the investigation of the distribution of square-free numbers in arithmetic progressions, see~\cite{Nun16}.

\section{Applications}

\subsection{Moments of Dirichlet $L$-functions} \label{sec: 4th Moment}
In a recent work of Wu~\cite{Wu22}, our results have already been applied in deducing an asymptotic formula for the fourth moment of central values of Dirichlet $L$-functions. More precisely, he proved that
\begin{equation}\label{eq:fourthmoment}
\frac1{\varphi^*(q)}\mathop{\sum\nolimits^*}_{\chi \bmod q}\left|L\(\tfrac12,\chi\)\right|^4
=\prod_{p\mid q}\frac{(1-p^{-1})^3}{(1+p^{-1})}P_4(\log q)+O(q^{-\delta+o(1)})
\end{equation}
with $\delta=(1-6\vartheta)/14$, where $P_4$ is an explicit polynomial of degree $4$ and
$\vartheta$ denotes the exponent towards the Ramanujan--Petersson conjecture.
Here $q$ is a large positive integer with $q\not\equiv 2 \bmod4$, and $\chi$ runs over all primitive characters mod $q$ with $\varphi^*(q)$ denoting the number of all such characters.  Note that one may take $\vartheta=7/64$ thanks to
Kim and Sarnak~\cite{KiSa}.

Based on their deep observations in random matrix theory, Conrey, Farmer, Keating, Rubinstein and Snaith~\cite[\S 3.1 \& \S 4.3]{CFK+05} formulated the exact shape of $P_4$ and conjectured
that one may take $\delta=1/2$ in~\eqref{eq:fourthmoment}, which coincides with the square-root cancellation philosophy. The first instance establishing the existence of $\delta>0$ is due to Young~\cite{Yo11}, who proved that $\delta=(1-2\vartheta)/80$ is admissible for all large prime $q$.
In more recent works by Blomer, Fouvry,  Kowalski, Michel and Mili{\'c}evi{\'c}~\cite{BFKMM17a,BFKMM17b}, the dependence on the
Ramanujan-Petersson conjecture can be removed for prime moduli $q$, and one can even take a much better choice $\delta=1/20$. Note that new bounds on bilinear forms with Kloosterman sums~\cite{FKM14,KMS17,SZ16} play an important role
in their approach.

One of the ingredients in Wu~\cite{Wu22} is the application of our estimates for bilinear forms with incomplete Kloosterman sums as stated in Theorem~\ref{thm:TypeII-incomplete-q}, which play a crucial role in bounding off-diagonal terms with very unbalanced lengths of summations.
Ignoring the effect of the Ramanujan--Petersson conjecture (RPC), one notes that $\delta=(1-6\vartheta)/14$ in~\cite{Wu22} indicates the same power saving as in~\cite{BFKMM17b}, which shows $\delta=1/14$ for prime moduli under the RPC. Note that in~\cite{BFKMM17b},  a weaker result  with 
$\delta=1/16$ is claimed under the RPC, but
it turns out to be a mistake in calculations, which has been observed in~\cite[Theorem~1.1]{Zac19}.
 This means that, for general moduli, Theorem~\ref{thm:TypeII-incomplete-q} gains the same saving as the previous work for prime moduli. Power saving in~\cite{BFKMM17a,BFKMM17b} relies on an estimate of the following sum of Kloosterman sums
\begin{align*}
\mathop{\sum\sum}_{m_1,m_2,m_3,m_4}&W_1\left(\frac {m_1}{M_1}\right)W_2\left(\frac {m_2}{M_2}\right)\\
&\times W_3\left(\frac {m_3}{M_3}\right)W_4\left(\frac {m_4}{M_4}\right)\cK_p(m_1m_2,m_3m_4)
\end{align*}
with some compactly supported smooth functions $W_i(x)$, $i =1, \ldots , 4$. The estimate of this sum is based on both Type~I (see~\cite[Equation~(1.3)]{BFKMM17b}) and Type~II (see~\cite[Equation~(5.1)]{BFKMM17a}) sums of complete Kloosterman sums, which turns out to be quite different from 
Theorem~\ref{thm:TypeII-incomplete-q}. It is interesting to see that two different approaches lead to the same saving, 
while Theorem~\ref{thm:TypeII-incomplete-q} is available for general moduli and does not impose any smoothness conditions 
on the weights.

\subsection{Moments of short sums of Kloosterman sums}

A direct consequence of Theorem~\ref{thm:TypeI-complete-arbitraryset1} is the following bound on moments of  of short sums of  Kloosterman sums.

\begin{theorem}\label{thm:moments}
Let $r$ be a positive integer and let $\cJ\subseteq \F_p^{*}$ be an interval of length $N\leqslant p^{1-1/r}$.
For any fixed $\alpha\in[1,12r/7]$, we have
\begin{align*}
\sum_{\lambda \in \F_p^{*}}\left|\sum_{n\in \cJ}\cK_p(\lambda,n)\right|^{2\alpha}
&\ll p^{2\alpha+o(1)}N^{(12r-7\alpha)/(12r-7)}\\
&\qquad \qquad \times\left(1+\frac{N^2}{p^{1-1/r}}\right)^{7(\alpha-1)/(12r-7)}.
\end{align*}
\end{theorem}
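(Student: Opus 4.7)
The plan is to dyadically decompose by the magnitude of $F(\lambda):=\sum_{n\in \cJ}\cK_p(\lambda,n)$, bound each level set by Theorem~\ref{thm:TypeI-complete-arbitraryset1} with unimodular weights (yielding a ``large-exponent'' endpoint estimate), pair the result with a Parseval-type second moment (the ``small-exponent'' endpoint), and interpolate by H\"older's inequality.

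For a dyadic $V>0$ set $\cM_V:=\{\lambda\in\F_p^{*}:V\leq |F(\lambda)|<2V\}$ and $M_V:=\#\cM_V$. Writing $F(\lambda)=|F(\lambda)|\omega_\lambda$ with $|\omega_\lambda|=1$ and taking $\alpha_\lambda=\overline{\omega_\lambda}$ supported on $\cM_V$, the sum $\sum_\lambda \alpha_\lambda F(\lambda)$ has the shape $\SMp$ of~\eqref{eq:Type-I-GenSet} with $\cM=\cM_V$ and $\|\balpha\|_\infty=1$, and equals $\sum_{\lambda\in\cM_V}|F(\lambda)|\geq V M_V$. Using the identity
\[
p^{-7(r-1)/24r^2}\,N^{7/12r}=\bigl(N^2/p^{1-1/r}\bigr)^{7/24r}
\]
together with $1+x^{7/24r}\asymp (1+x)^{7/24r}$, Theorem~\ref{thm:TypeI-complete-arbitraryset1} collapses into
\[
V M_V\ll M_V^{1-7/24r}\,p^{1+o(1)}\bigl(1+N^2/p^{1-1/r}\bigr)^{7/24r},
\]
which rearranges to the endpoint estimate
\begin{equation}\label{eq:mom-endpoint1}
V^{24r/7}M_V\ll p^{24r/7+o(1)}\bigl(1+N^2/p^{1-1/r}\bigr).
\end{equation}

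The second endpoint comes from Parseval. Opening $\cK_p(\lambda,n)$ gives $F(\lambda)=\sum_{x\in\F_p^{*}}\ep(\lambda x)G(x)$ with $G(x):=\sum_{n\in\cJ}\ep(n\xbar)$; summing $|F(\lambda)|^2$ over $\lambda\in\F_p$ yields $p\sum_{x}|G(x)|^2=pN(p-N)$, and removing the $\lambda=0$ term produces
\begin{equation}\label{eq:mom-endpoint2}
\sum_V V^2 M_V\asymp \sum_{\lambda\in\F_p^{*}}|F(\lambda)|^2\ll p^2 N.
\end{equation}

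For $\alpha\in[1,12r/7]$, the exponent $\theta:=7(\alpha-1)/(12r-7)$ lies in $[0,1]$ and satisfies $(24r/7)\theta+2(1-\theta)=2\alpha$. Writing
\[
V^{2\alpha}M_V=(V^{24r/7}M_V)^{\theta}(V^2 M_V)^{1-\theta},
\]
summing over the $O(\log p)$ dyadic values of $V$ and applying H\"older's inequality gives
\[
\sum_\lambda |F(\lambda)|^{2\alpha}\ll \Big(\sum_V V^{24r/7}M_V\Big)^{\theta}\Big(\sum_V V^2 M_V\Big)^{1-\theta}\ll p^{2\alpha+o(1)}N^{1-\theta}\bigl(1+N^2/p^{1-1/r}\bigr)^{\theta},
\]
and since $1-\theta=(12r-7\alpha)/(12r-7)$ this is exactly the claimed bound. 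The only slightly technical step is the algebraic simplification reducing Theorem~\ref{thm:TypeI-complete-arbitraryset1} to the clean form~\eqref{eq:mom-endpoint1}; the rest is dyadic pigeonhole, orthogonality, and H\"older.
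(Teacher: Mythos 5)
Your proposal is correct and uses essentially the same ingredients as the paper's proof: the level-set application of Theorem~\ref{thm:TypeI-complete-arbitraryset1} with unimodular weights to get the endpoint estimate $V^{24r/7}M_V\ll p^{24r/7+o(1)}(1+N^2/p^{1-1/r})$, the orthogonality/Parseval bound $\sum_\lambda|F(\lambda)|^2\ll p^2N$, and H\"older interpolation with the exponent $\theta=7(\alpha-1)/(12r-7)$. The only (cosmetic) difference is the order of operations: the paper first interpolates between the full moments $\fM_1$ and $\fM_{12r/7}$ and then proves the endpoint by pigeonholing a single dominant dyadic level, whereas you establish the endpoint bound at every dyadic level and apply H\"older across the levels.
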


In particular, taking $\alpha = 12r/7$ in Theorem~\ref{thm:moments} we have
\begin{equation}\label{eq:alpha=12r/7}
\sum_{\lambda \in \F_p^{*}}\left|\sum_{n\in \cJ}\cK_p(\lambda,n)\right|^{24r/7}
\ll p^{24r/7+o(1)}\left(1+\frac{N^2}{p^{1-1/r}}\right).
\end{equation}
In fact, we show in the proof of Theorem~\ref{thm:moments}  that this is essentially the
only case which has to be established.

As we have mentioned, the sums considered in Theorem~\ref{thm:TypeII-incomplete-arbitraryset1} are related to work of Bourgain~\cite{Bo05} on short bilinear sums of incomplete Kloosterman sums. In particular~\cite[Theorem~A.1]{Bo05} is based on~\cite[Lemma~A.2]{Bo05} which states that for an interval $I$ of length $M<p^{1/2}$ we have
\begin{equation}
\label{eq:largevalues111}
\#\left\{\lambda \in \F_p : ~\left|\sum_{x\in I}\e_p(\lambda x^{-1})\right|\ge p^{-\varepsilon}M\right\}\le p^{1+\delta}M^{-2}.
\end{equation}
Analysing the proof of~\cite[Lemma~A.2]{Bo05}, one sees that Bourgain's argument~\cite{Bo05} allows the choice
$$\delta=(4+o(1))\sqrt{\varepsilon}.$$
It is possible to use Theorem~\ref{thm:TypeII-incomplete-arbitraryset1}, with the set $\cM$  which appears on the left hand side of~\eqref{eq:largevalues111}, to show that one may take
$$\delta=\left(2\sqrt{\frac{24}{7}}+o(1)\right)\sqrt{\varepsilon}\approx 3.7032 \sqrt{\varepsilon}$$
in~\eqref{eq:largevalues111}.

\subsection{Divisor function in a family of arithmetic progressions}
\label{sec:DIv-AP}

For integers $a$ and $q\ge 2$  with $\gcd(a,q)=1$,
consider the
{\it divisor sum\/}
$$
\SXaq=\sum_{\substack{n\le X\\n\equiv a\bmod q}}\tau(n).
$$
By standard heuristic arguments, the expected main term  for $\SXaq$ should be
$$\MXaq =
\frac{\varphi(q) }{q^2}X \(\log X+2\gamma-1\)
- \frac{2}q X\sum_{d\mid q}\frac{\mu(d)\log d}d,
$$
where $\varphi(q)$ is the {\it Euler function of $q$\/}.
The basic problem is to prove that the error term
$$
\RXaq = \SXaq  - \MXaq
$$
can be bounded by $O(X^{1-\delta}/q)$ with some constant $\delta>0$ and $q$ being as large as possible compared to $X$.
This was realized independently by Selberg and Hooley~\cite{Ho57}, as long as
$q\le X^{2/3-\varepsilon}$ for any $\varepsilon>0$.

Banks,   Heath-Brown and  Shparlinski~\cite{BHBS05} and then
Blomer~\cite[Theorem~1.1]{Blom08} established bounds on the second
moment of error  term  $\RXaq$  which are nontrivial in the essentially optimal range
$q\le X^{1-\varepsilon}$ with an arbitrary fixed $\varepsilon>0$.

Kerr and Shparlinski~\cite{KeS20} have considered a mixed scenario between
pointwise and average bounds on $\RXaq$.  Namely, given a set $\cA\subseteq\Z_q^*$, we define
$$
  \EAR =  \sum_{a \in \cA} \RXaq.
$$
Then by~\cite[Theorem~1.2]{KeS20}, for any integers $A$, $X$ and $q$ with
$$ A\le q  \mand X \ge q \ge X^{19/31},
$$
and any interval $\cA \subseteq \Z_q^{*}$ of length $A$, we have
$$
|\EAR|  \le  ( A X^{1/2}   q^{-1/2 } + A^{1/8}  X^{1/4} q^{1/2} + A^{1/2}  X^{1/4} q^{1/4} ) q^{o(1)}.
$$
%which is nontrivial for $q^{1 - \varepsilon} \ge A \ge q^{3/7 + \varepsilon}$,
We refer to~\cite{KeS20}  for more details.

Theorem~\ref{thm:TypeI-complete-q} allows us to derive the following bound for $\EAR$.

\begin{theorem}\label{thm:EAN}
Let $q$ be a positive integer and $I$ an interval of length $A<q$.
For all $X\geqslant q$ and any set $\cA=I\cap \Z_q^*$ with $q^3<AX^2/8$, we have
$$
|\EAR|  \le (E(A,N,q)+A^{2/3}X^{1/3})X^{o(1)},
$$
where
\begin{align*}
E(A,N,q)=\min\{q^{1/2}X^{1/4}+X^{1/2} ,~X^{1/2}&(A^{1/4}+Aq^{-1/2}),\\
&\quad X^{1/2}(1+Aq^{-1/4})\}.
\end{align*}  
\end{theorem}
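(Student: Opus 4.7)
The plan is to use the Voronoi summation formula on the divisor function to express $\RXaq$ in terms of complete Kloosterman sums to moduli $q/d$ for $d\mid q$, and then to invoke Theorem~\ref{thm:TypeI-complete-q} after summing over $a\in\cA$.

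After replacing the sharp cutoff $n\leq X$ by a smooth weight $W(n/X)$ supported in $[X^{1-\eps},X^{1+\eps}]$, I detect the congruence $n \equiv a \bmod q$ through
$$
\mathbf{1}_{n\equiv a\bmod q}=\frac{1}{q}\sum_{h\bmod q}\e_q(h(n-a)).
$$
The $h=0$ contribution, combined with a M\"obius handling of $(a,q)=1$, matches $\MXaq$ up to an admissible $O(X^{o(1)})$ error. For $h\ne0$, put $d=\gcd(h,q)$ and $h=dh'$ with $(h',q/d)=1$; Voronoi summation for $\tau$ at modulus $q/d$ then transforms
$$
\sum_n \tau(n) W(n/X)\,\e_{q/d}(h'n)
\;\longmapsto\;
\frac{1}{q/d}\sum_m\tau(m)\,\e_{q/d}(-\overline{h'}m)\,F(m;q/d),
$$
where $F(\cdot;q/d)$ is a Bessel integral of $W$, effectively supported on $m\ll (q/d)^2/X\cdot X^{o(1)}$ and satisfying, via the trivial bound $|F(m;q/d)|\ll X$, the $L^2$ estimate
$$
\sum_m \tau(m)^2\,|F(m;q/d)|^2 \ll (q/d)^2 X\cdot X^{o(1)}.
$$

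Substituting back and executing the $h'$-summation via the key identity
$$
\sum_{(h',q/d)=1}\e_{q/d}(-h'a)\,\e_{q/d}(-\overline{h'}m)=\cK_{q/d}(-a,-m),
$$
complete Kloosterman sums appear, yielding
$$
\RXaq=\sum_{d\mid q}\frac{d}{q^2}\sum_m\tau(m)\,F(m;q/d)\,\cK_{q/d}(-a,-m)+O(X^{o(1)}).
$$
Averaging over $a\in\cA=I\cap\Z_q^*$, with the coprimality condition inside the interval $I$ peeled off by M\"obius inversion (the non-principal divisor terms being smaller), the task reduces, for each $d\mid q$, to bounding a Type~I bilinear form
$$
\sum_m \tau(m)\,F(m;q/d)\sum_{n\in I'}\cK_{q/d}(m,n),
$$
with $I'$ an interval of length $\le A$ and $m$ ranging over an interval of length $M=(q/d)^2/X$.

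Theorem~\ref{thm:TypeI-complete-q} (applied with trivial multiplier, so with its $d=1$) together with the $L^2$ bound above, applied to the dominant $d=1$ term, gives
$$
|\EAR|\ll q^{1/2}A\cdot\Delta_1(q^2/X,\,A,\,q,\,1)\cdot X^{o(1)}.
$$
Substituting each of the three options \eqref{eq:Delta11}--\eqref{eq:Delta13} produces, respectively, the three summands in the $\min$ defining $E(A,N,q)$, each augmented by a residual $q^{1/2}A^{1/2}$ coming from the $N^{-1/2}$ component of $\Delta_1$. The hypothesis $q^3<AX^2/8$ is precisely what guarantees $q^{1/2}A^{1/2}\le A^{2/3}X^{1/3}$ (upon raising to the sixth power), which absorbs this residual into the stated bound. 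Contributions from $d>1$ decay in $d$ and only cost an extra factor $\tau(q)\ll q^{o(1)}$. The main obstacle is the clean matching of the $h=0$ piece together with all Voronoi main terms against the exact definition of $\MXaq$, particularly its secondary component $(2/q)X\sum_{d\mid q}\mu(d)\log(d)/d$; this is a familiar, if delicate, calculation, carried out in a similar vein in~\cite{KeS20}. A secondary technical point is to confirm that the M\"obius treatment of $(a,q)=1$ within $I$ preserves the strengths of all three exponents in~\eqref{eq:Delta11}--\eqref{eq:Delta13}, with the correction terms handled by the Weil bound applied pointwise.
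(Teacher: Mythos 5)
Your overall strategy --- Voronoi summation to convert $\RXaq$ into bilinear forms with complete Kloosterman sums to moduli $q/d$, then an application of Theorem~\ref{thm:TypeI-complete-q} after M\"obius removal of the coprimality condition --- is the same as the paper's, which itself follows~\cite{KeS20} with the new Type~I bound substituted in; and your arithmetic translating the three options~\eqref{eq:Delta11}--\eqref{eq:Delta13} at $M=q^2/X$, $N=A$ into the three summands of $E(A,N,q)$ plus a residual $q^{1/2}A^{1/2}$ is correct. The genuine gap is in your treatment of the smoothing and, consequently, of the length of the Voronoi dual sum. You replace the sharp cutoff $n\le X$ by a smooth weight at scale $\asymp X$ and accordingly truncate the dual variable at $m\ll (q/d)^2/X$. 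But the error in that replacement is, for each $a$, a $\tau$-weighted count of $n\equiv a\bmod q$ over an interval of length $\asymp X$ near the edge, with no oscillation available, hence of size $X^{1+o(1)}/q$ per class and $AX^{1+o(1)}/q$ after summing over $\cA$. Under the hypothesis $q^3<AX^2/8$ one has $AX/q> A^{2/3}X^{1/3}$, so this error is never absorbed by the stated bound (and it also exceeds $E(A,N,q)$ in the interesting ranges, e.g.\ $q\asymp X^{2/3}$). If instead you smooth at an admissible scale $Y$ (cost $AY/q$), the dual sum necessarily runs out to $m\ll d^2XY^{-2}\gg d^2/X$, and the entire range $d^2/X<m\le d^2XY^{-2}$, where the Bessel transform decays only like $X^{1/4}d^{1/2}m^{-3/4}$, must be estimated; this is precisely the paper's dyadic decomposition into the sums $E_{2,i}^{\pm}(d)$, whose contribution dominates that of the flat range $m\le d^2/X$ which is all you kept.

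Relatedly, you misidentify both the source of the term $A^{2/3}X^{1/3}$ and the role of the hypothesis $q^3<AX^2/8$. In the paper's proof that term arises from balancing the smoothing error $AY/q$ against the contribution $(qAX/Y)^{1/2}$ coming from the $N^{-1/2}$ piece of $\Delta_1$ evaluated at the top of the dual range $V(d)\asymp d^2XY^{-2}$, via the choice $Y=q(X/A)^{1/3}$; the hypothesis is exactly what guarantees this optimal $Y$ satisfies $Y\le X/2$. Your observation that $q^{1/2}A^{1/2}\le A^{2/3}X^{1/3}$ is equivalent to $q^3\le AX^2$ is arithmetically fine, but it only controls the $N^{-1/2}$ residual at $m\asymp q^2/X$, not the smoothing error nor the tail of the dual sum, so the coincidence of exponents does not close the argument. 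To repair the proof you must reintroduce the parameter $Y$, carry the dual sum out to $V(d)$ with the decaying weights $X^{1/4}d^{1/2}n^{-3/4}$, and optimize $Y$ at the end, as is done in the paper via the framework of~\cite{KeS20}.
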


Theorem~\ref{thm:EAN} is nontrivial only for $q^3\ll AX^2$ and improves the results of~\cite{KeS20} for this range of parameters. If we could ignore the term $A^{2/3}X^{1/3}$, then Theorem~\ref{thm:EAN} would 
improve~\cite{KeS20} in the full range due to the appearance of
$q^{1/2}X^{1/4}$. In the crucial range $q\approx X^{2/3}$, Theorem~\ref{thm:EAN} beats the Selberg--Hooley barrier on average for $A\gg q^{3/8+\varepsilon}$ for any fixed $\varepsilon>0$, while one requires $A\gg q^{3/7+\varepsilon}$
in~\cite{KeS20}.

\section{Exponential sums}

\subsection{Basic properties of Kloosterman sums}
We first present some materials for Kloosterman sums, some of which can be found
in~\cite[Section~4.3]{Iwa97}. The following elementary properties are well-known and trivial to verify:
$$
\cK_q(m,n)=\cK_q(n,m),
$$
and
\begin{equation}\label{eq:Kloosterman-transition}
\cK_q(cm,n)=\cK_q(m,cn) \quad \text{if}~\gcd(c,q)=1.
\end{equation}
We also need the following Selberg--Kuznetsov identity
\begin{equation}\label{eq:Selberg-Kuznetsov}
\cK_q(m,n)=\sum_{d\mid \gcd(m,n,q)}d \cK_{qd^{-1}}(mnd^{-2},1).
\end{equation}
Kloosterman sums also enjoy the twisted multiplicativity
$$
\cK_q(m,n)=\cK_{q_1}(mq_2^{-1},nq_2^{-1})\cK_{q_2}(mq_1^{-1},nq_1^{-1})
$$
for any $q_1,q_2$ satisfying $q=q_1q_2$ and $\gcd(q_1,q_2)=1$.

The Kloosterman sum $\cK_q(m,n)$ reduces to the Ramanujan sum
$$
c_q(m)=\sum_{x\in\Z_q^*}\eq(m x)
$$
if $n\equiv0 \bmod q$, see~\cite[\S~3.2]{IK04}  for a background on
Ramanujan sums. In particular, we frequently use the inequality
\begin{equation}\label{eq:Ramanujansum-upperbound}
|c_{q}(m)| \leqslant \gcd(q,m).
\end{equation}

\subsection{Fourier transforms of products of Kloosterman sums}
By virtue of Kloosterman sums, we define
\begin{equation}\label{eq:exponentialsum-T}
T(x,y,z;q)=\frac{1}{q}\sum_{t\in\Z_q}\cK_q(x,t)\cK_q(y,t)\eq(-zt).
\end{equation}
This can be regarded as the (normalized) Fourier transform of products of two Kloosterman sums. For the purpose of estimating bilinear forms in this paper, we would like to explore an upper bound for $T(x,y,z;q)$ which exhibits squareroot cancellations up to some harmless factors.

The Chinese Remainder Theorem, together with the twisted multiplicativity of Klooterman sums yields the following twisted multiplicativity
\begin{equation}\label{eq:T-multiplicativity}
T(x,y,z;q)=T(x{q}^{-1}_2,y{q}^{-1}_2,z;q_1)T(x{q}^{-1}_1,y{q}^{-1}_1,z;q_2)
\end{equation}
for any $q_1,q_2$ satisfying $q_1q_2=q$ and $\gcd(q_1,q_2)=1$. Hence the evaluation of
$T(x,y,z;q)$ reduces to the situation of prime power moduli.

Here is our upper bound for $T(x,y,z;q)$.
\begin{lemma}\label{lem:T-upperbound-q}
Let $q$ be a positive integer. For $x,y,z\in\Z$, we have
$$
T(x,y,z;q)\ll \gcd(x,y,q)^{1/2}\gcd\(x-y,z,\frac{q}{(x,y,q)}\)^{1/2}q^{1/2+o(1)}.
$$
\end{lemma}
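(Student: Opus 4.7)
The plan is to reduce to prime power moduli via multiplicativity, unfold $T$ into a one-variable rational exponential sum, strip common factors, and then bound the reduced sum via Weil's theorem and $p$-adic stationary phase.

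First, by the twisted multiplicativity \eqref{eq:T-multiplicativity} it suffices to treat $q=p^k$ a prime power: both gcd factors in the claimed bound split across the prime factorization of $q$ in view of $\gcd(x,y,q)=\prod_{p^k\|q}\gcd(x,y,p^k)$ and $q/\gcd(x,y,q)=\prod_{p^k\|q}p^{k-v_p(\gcd(x,y,p^k))}$. Opening the two Kloosterman sums in \eqref{eq:exponentialsum-T} via $\cK_q(x,t)=\sum_{u\in\Z_q^*}\eq(xu+t\bar u)$ and invoking orthogonality in $t$ yields the one-variable rational sum
\begin{equation*}
T(x,y,z;q)=\sum_{\substack{a\in\Z_q^*\\ z-a\in\Z_q^*}}\eq\bigl(x\bar a+y\,\overline{z-a}\bigr).
\end{equation*}

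Second, I would strip the common $p$-part. With $p^m=\gcd(x,y,p^k)$ and $x=p^m x'$, $y=p^m y'$, the exponential depends only on $a$ modulo $p^{k-m}$, so decomposing $a=a_0+p^{k-m}a_1$ gives
\begin{equation*}
T(x,y,z;p^k)=p^m\,T(x',y',z';p^{k-m}),
\end{equation*}
where $z'=z\bmod p^{k-m}$ and $\gcd(x',y',p)=1$. The task then reduces to showing, in the coprime case $\gcd(x,y,p)=1$ (WLOG $p\nmid x$), that
\begin{equation*}
|T(x,y,z;p^k)|\ll \gcd(x-y,z,p^k)^{1/2}\,p^{k/2+o(1)}.
\end{equation*}
A routine bookkeeping using $v_p(x-y)\ge m$ then recovers the lemma's bound, with slack to spare.

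The heart of the argument is the coprime bound. For $k=1$ the phase is the rational function $f(a)=((y-x)a+xz)/(a(z-a))$, and the Bombieri--Weil bound for rational exponential sums yields $|T|\ll p^{1/2}$ unless numerator and denominator share a root. A direct check shows degeneracy occurs exactly when $p\mid z$ and $p\mid(x-y)$; in that case $T$ collapses to a Ramanujan sum of size at most $p-1$, absorbed by the factor $\gcd(x-y,z,p)^{1/2}=p^{1/2}$ in the target bound. For $k\ge 2$ I would use $p$-adic stationary phase: writing $a=a_0+p^{\lceil k/2\rceil}a_1$ and Taylor expanding $f$, summation over $a_1$ forces $a_0$ onto the stationary locus $f'(a_0)\equiv 0\pmod{p^{\lfloor k/2\rfloor}}$ (with an extra quadratic Gauss-sum factor of size $p^{1/2}$ when $k$ is odd). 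Since $f'(a)=-x\bar a^2+y\,\overline{(z-a)}^2$, the stationary equation rearranges to the quadratic congruence
\begin{equation*}
x(z-a_0)^2\equiv y\,a_0^2\pmod{p^{\lfloor k/2\rfloor}},
\end{equation*}
which by Hensel has $O(1)$ solutions in the non-degenerate range and yields $|T|\ll p^{k/2}$, while degeneracy occurs precisely when $p\mid(x-y)$ and $p\mid z$, with an inflated solution count bounded by $\gcd(x-y,z,p^{\lfloor k/2\rfloor})$ that matches the target factor $\gcd(x-y,z,p^k)^{1/2}$.

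I expect the main obstacle to lie in the prime power case $k\ge 2$ for small primes (especially $p=2$) and odd $k$, where the Taylor expansion must be carried one order further and where pinning down the multiplicity of the stationary equation so as to match the $\gcd(x-y,z,p^k)^{1/2}$ factor on the nose, rather than losing extra powers of $p$, demands a careful case distinction rather than a crude count.
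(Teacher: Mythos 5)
Your overall architecture is sound and runs closely parallel to the paper's: the reduction to prime powers via \eqref{eq:T-multiplicativity}, the unfolding of $T$ by orthogonality into the complete one-variable sum $\sum_{a,\,z-a\in\Z_q^*}\eq\bigl(xa^{-1}+y(z-a)^{-1}\bigr)$, the stripping of the common $p$-part, and the prime case are all fine (for $q=p$ the paper's Lemma~\ref{lem:T-upperbound-p} does essentially what you do, after a change of variable that turns the sum into a single Kloosterman sum plus a Ramanujan sum). The problem is in the prime-power coprime case, exactly in the degenerate range you flag at the end, and it is not merely a matter of ``careful case distinction'': the bound you describe does not prove the lemma.

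Concretely, let $q=p^k$ with $k=2\kappa$, $\gcd(xy,p)=1$, $p$ odd, and $p^{\delta}=\gcd(x-y,z,q)$ with $0<\delta<\kappa$. Your stationary-phase step gives $|T|\le p^{\kappa}N$ with $N$ the number of units $a_0\bmod p^{\kappa}$ satisfying $x(z-a_0)^2\equiv ya_0^2\bmod p^{\kappa}$. A unit solution forces $p^{\min(v_p(z),\kappa)}\mid x-y$, so $\delta=v_p(z)$ here, and dividing $(x-y)a_0^2-2xza_0+xz^2$ by $p^{\delta}$ leaves a congruence modulo $p^{\kappa-\delta}$ with unit linear coefficient and unit discriminant $4xyz^2p^{-2\delta}$, hence $O(1)$ roots modulo $p^{\kappa-\delta}$ and $N\asymp p^{\delta}$ (this order is attained). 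The point count alone therefore yields only $|T|\ll \gcd(x-y,z,q)\,q^{1/2}$, whereas the lemma asserts $\gcd(x-y,z,q)^{1/2}q^{1/2+o(1)}$; your claim that the count $\gcd(x-y,z,p^{\lfloor k/2\rfloor})=p^{\min(\delta,\kappa)}$ ``matches the target factor $\gcd(x-y,z,p^k)^{1/2}=p^{\delta/2}$'' is false except when $\delta=0$ or $\delta\ge k$. For instance, with $q=p^4$ and $v_p(z)=v_p(x-y)=1$ your argument gives $p^{3}$ while the lemma claims $p^{5/2+o(1)}$. To close the gap you must extract further cancellation \emph{along} the stationary locus: writing its $\asymp p^{\delta}$ points as $a_0=\alpha_0+p^{\kappa-\delta}\beta$ and expanding $f$ to second order, the $\beta$-sum is a quadratic Gauss sum which saves the missing $p^{\delta/2}$. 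This is precisely the step the paper performs in a different guise: it evaluates $\cK_{p^k}(x,t)$ explicitly on the locus $t\equiv l^2x$ via Lemma~\ref{lem:Kloosterman-exact}, so that $T$ becomes a combination of the sums $G^*$ of \eqref{eq:Gausssum*}, and the factor $\gcd(x-y,z,q)^{1/2}$ is delivered by the Gauss-sum bound of Lemma~\ref{lem:Gausssum*-upperbound}, not by a count of stationary points. Once that Gauss-sum evaluation is incorporated (together with the routine $p=2$ and odd-$k$ adjustments you already anticipate), your route does recover the lemma, and is then essentially equivalent to the paper's.
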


The proof of Lemma~\ref{lem:T-upperbound-q} is given in Appendix~\ref{sec:Kloostermanappendix}, since the arguments are based on vanishings and exact expressions of Kloosterman sums, which might be of independent interests.

\section{Counting with reciprocals}\label{sec:Jq(a,K)}

\subsection{Notation}
For integers $a$ and $q$, we are interested in the counting function
\begin{equation}\label{eq:Jq(a,K)}
J_q(a,K)=\#\{(k_1, k_2)\in[1,K]^2:~k_1^{-1}-k_2^{-1}\equiv a \rmod q\},
\end{equation}
which turns out to be an important object and occupies a central role in our later arguments.
There is a trivial bound
%%\begin{equation}\label{eq:Jq(a,K)-trivialbound}
$$
J_a(a,K)\leqslant K(K/q+1)
$$
%%\end{equation}
for all $K>1$, which reaches the true order of magnitude as $K\geqslant q$. The situation now becomes subtle as long as $K<q$, in which case we need,  for the sake of later applications, to beat the above bound in as wide a range of parameters as possible.

\subsection{Pointwise bounds}
The following lemma gives a uniform bound for $J_q(a,K)$, which is in fact contained implicitly in a series of papers of Heath-Brown; see~\cite[Page~367]{HB78} or~\cite[Page~46]{HB86} for instance.

\begin{lemma}\label{lem:Jq(a,K)-Heath-Brown}
Let $q$ be a positive integer and let $a\in\Z$. For any $K\leqslant q$,
we have
$$
J_q(a,K)\le \(\frac{K^{3/2}}{q^{1/2}}+\frac{K^2\gcd(a,q)}{q}+1\)q^{o(1)}.
$$
\end{lemma}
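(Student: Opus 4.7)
The plan is to follow the general strategy of Heath-Brown~\cite{HB78, HB86}, combining an algebraic identity with a divisor-function estimate.

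First, I would separate the contribution from $d = \gcd(a, q)$. Multiplying the congruence $k_1^{-1} - k_2^{-1} \equiv a \pmod{q}$ by $k_1 k_2$ (a unit modulo $q$) yields $k_2 - k_1 \equiv a k_1 k_2 \pmod{q}$; reducing modulo $d$ forces $k_1 \equiv k_2 \pmod{d}$. Partitioning $(k_1, k_2)$ by their common residue class modulo $d$, the remaining congruence becomes $k_1^{-1} - k_2^{-1} \equiv (a/d) \pmod{q/d}$ with $\gcd(a/d, q/d) = 1$. Summing the contributions over the $d$ residue classes is what produces the $K^2 \gcd(a, q)/q$ term in the bound, up to $q^{o(1)}$ factors.

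Second, in the coprime case $\gcd(a, q) = 1$, I would exploit the identity
\[
(a k_1 - 1)(a k_2 + 1) \equiv -1 \pmod{q},
\]
obtained from $k_2 - k_1 \equiv a k_1 k_2 \pmod{q}$ by multiplying through by $a$ and completing. Setting $u = a k_1 - 1$ and $v = a k_2 + 1$, the counting then reduces to counting lattice points on the modular hyperbola $uv \equiv -1 \pmod{q}$, with $u$ and $v$ confined to arithmetic progressions of common difference $a$ and length $K$.

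Third, I would bound this lattice count by lifting to the integer equation $uv = qs - 1$ for $s \in \mathbb{Z}$. Each solution corresponds to a divisor factorization of $qs - 1$, whose number is at most $\tau(|qs - 1|) \leq q^{o(1)}$. The main technical obstacle is that the effective range of $s$ depends on the integer representative of $a$ (which can be as large as $q/2$ after symmetrizing $J_q(a,K) = J_q(-a,K)$), so a direct divisor count is too weak when $a$ is large. To obtain the $K^{3/2}/q^{1/2}$ bound uniformly in $a$, one applies a Cauchy--Schwarz argument to the counting function over $s$, exploiting the arithmetic progression structure of $(u, v)$ to average out the dependence on the size of $a$, in the spirit of the divisor/energy estimates of Heath-Brown. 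The $+1$ term simply absorbs trivial and boundary contributions.
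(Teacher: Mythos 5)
First, a point of reference: the paper does not actually prove this lemma --- it is quoted as being ``contained implicitly'' in Heath-Brown's work~\cite{HB78,HB86} --- so your proposal has to stand on its own, and it does not close. Already the reduction to the coprime case is wrong as stated: from $k_2-k_1\equiv ak_1k_2\pmod q$ one does get $d\mid k_1-k_2$ with $d=\gcd(a,q)$, but the claimed congruence $k_1^{-1}-k_2^{-1}\equiv a/d\pmod{q/d}$ does not follow (dividing $\bar k_1-\bar k_2\equiv a\pmod q$ by $d$ gives $(\bar k_1-\bar k_2)/d\equiv a/d\pmod{q/d}$, and $(\bar k_1-\bar k_2)/d$ is not a difference of inverses modulo $q/d$; take $q=4$, $a=2$, $(k_1,k_2)=(1,3)$ for a counterexample). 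Nor is it explained why summing over the $d$ residue classes produces $K^2d/q$ rather than $K^2/d$ times whatever each class contributes; the honest treatment (e.g.\ writing $k_2=k_1+dh$ and counting roots of the resulting quadratic congruence in $k_1$ modulo $q/d$, with the gcd of its discriminant entering) is genuinely more delicate.

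The fatal gap, though, is in your third step, which is where the main term $K^{3/2}q^{-1/2}$ has to come from. With $u=ak_1-1$, $v=ak_2+1$ one has $uv=-(aqs'+1)$ where $qs'=k_2-k_1-ak_1k_2$, so the number of admissible $s'$ is of order $1+|a|K^2/q$ even for the symmetric representative $|a|\le q/2$; the divisor count then gives $q^{o(1)}(1+|a|K^2/q)$, which beats $K^{3/2}q^{-1/2}$ only when $|a|\lesssim (q/K)^{1/2}$. The ``Cauchy--Schwarz over $s$'' you invoke does not rescue the large-$|a|$ case: writing $J=\sum_s r(s)\le(\#\{s:r(s)>0\})^{1/2}(\sum_s r(s)^2)^{1/2}$ and bounding $\sum_s r(s)^2$ by the multiplicative energy of the two progressions yields at best $q^{o(1)}(|a|^{1/2}K^2q^{-1/2}+K)$, which never reaches $K^{3/2}q^{-1/2}$ and does not even improve on the trivial bound $K$ unless $|a|<q/K^2$. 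The missing idea is a rational approximation of $a$ modulo $q$: by Dirichlet's pigeonhole choose $t,u$ with $au\equiv t\pmod q$, $|t|\le T$, $0<u\le 2q/T$, multiply the congruence through by $u$ to get $u(k_2-k_1)-tk_1k_2=qm$ with $|m|\ll(uK+TK^2)/q$, take $T=(q/K)^{1/2}$ so that $|m|\ll K^{3/2}q^{-1/2}$, and only then apply the divisor bound to the identity $(tk_1-u)(tk_2+u)=-tqm-u^2$. This is exactly the Cilleruelo--Garaev/Karatsuba device the paper deploys in the proof of Lemma~\ref{lem:Jq(a,K)-onaverage}; without it, or an equivalent substitute, your argument does not produce the stated bound.
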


By virtue of explicit evaluations of Kloosterman sums, we also have the following alternative upper bound for $J_q(a,K)$, which is sharper than Lemma~\ref{lem:Jq(a,K)-Heath-Brown} as long as $K>q^{2/3}$.

\begin{lemma}\label{lem:Jq(a,K)-new}
Let $q$ be a positive integer and  let $a\in\Z$. For any $K\leqslant q$,
we have
$$
J_q(a,K)\le \(\frac{K^2}{q}+\frac{K\gcd(a,q)^{1/2}}{q^{1/2}}+q^{1/2}\)q^{o(1)}.
$$
\end{lemma}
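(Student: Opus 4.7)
The plan is to reduce the counting problem to a double sum involving the exponential sum $T(s_1,s_2,a;q)$ defined in \eqref{eq:exponentialsum-T}, and then apply Lemma~\ref{lem:T-upperbound-q}. First, using the standard orthogonality relation in $\Z_q$ to detect the congruence $k_1^{-1}-k_2^{-1}\equiv a\bmod q$, I would write
$$
J_q(a,K)=\frac{1}{q}\sum_{t\in\Z_q}\eq(-at)|S(t)|^2,\qquad S(t)=\sum_{\substack{k\in[1,K]\\\gcd(k,q)=1}}\eq(tk^{-1}).
$$
Next I complete $S(t)$ via Fourier inversion to express it as a combination of complete Kloosterman sums, $S(t)=q^{-1}\sum_{s\in\Z_q}B(s)\cK_q(s,t)$ with $B(s)=\sum_{j=1}^{K}\eq(-sj)$. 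Substituting this into the expression for $|S(t)|^2$ and interchanging the sums over $t$ and $(s_1,s_2)$ converts the inner $t$-sum into $T(s_1,s_2,a;q)$, giving the master identity
$$
J_q(a,K)=\frac{1}{q^2}\sum_{s_1,s_2\in\Z_q}B(s_1)\overline{B(s_2)}\,T(s_1,s_2,a;q).
$$

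I would then apply Lemma~\ref{lem:T-upperbound-q} in the form $|T|\ll D^{1/2}\gcd(s_1-s_2,a,q/D)^{1/2}q^{1/2+o(1)}$, where $D=\gcd(s_1,s_2,q)$, bound the last gcd factor by $g_D:=\gcd(a,q/D)$ when convenient, and exploit the key structural inequality $D\cdot g_D\le D(q/D)=q$. Organizing the double sum by $D\mid q$ (writing $s_i=D\sigma_i$), the contribution splits naturally into a ``diagonal'' piece $s_1=s_2$ and an ``off-diagonal'' piece $s_1\ne s_2$. The diagonal piece is controlled through the Parseval-type bound $\sum_{s:D\mid s}|B(s)|^2\le qK/D+K^2$: the $qK/D$-piece combined with $\sum_{D\mid q}g_D^{1/2}/D^{1/2}\ll g^{1/2}q^{o(1)}$ produces $Kg^{1/2}/q^{1/2}$, while the $K^2$-piece combined with $\sum_{D\mid q}D^{1/2}g_D^{1/2}\ll q^{1/2+o(1)}$ (via $Dg_D\le q$) produces $K^2/q$. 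For the off-diagonal piece I would use the direct estimate $\sum_{\sigma\in\Z_{q/D}}|B(D\sigma)|\ll K+(q/D)\log q$, yielding again a main $K^2/q$ term and, from the $(q/D)^2$ component, the desired $q^{1/2}$ term after the same divisor sum identity.

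The main obstacle will be the off-diagonal regime when $g=\gcd(a,q)$ is large. Here the bound $\gcd(s_1-s_2,a,q/D)\le g_D$ is wasteful, and crudely plugging it in gives an unwanted $g^{1/2}$ factor. To circumvent this one should use the sharper decomposition $\gcd(D(\sigma_1-\sigma_2),a,q/D)=h_D\cdot\gcd(\sigma_1-\sigma_2,g_D/h_D)$ with $h_D=\gcd(D,a,q/D)$, and split the resulting divisor sum $\sum_{D\mid q}g_D^{1/2}/D^{3/2}$ into the ranges $D\le q/g$ (using $g_D\le g$) and $D>q/g$ (using $g_D\le q/D$); in both ranges the sum is bounded by $q^{o(1)}$ after the $q^{1/2}$ prefactor from Lemma~\ref{lem:T-upperbound-q}, producing exactly the $q^{1/2}$ contribution. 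Together with the diagonal bounds this assembles the three claimed terms $K^2/q$, $Kg^{1/2}/q^{1/2}$ and $q^{1/2}$, each with an admissible $q^{o(1)}$ loss.
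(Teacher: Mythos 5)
Your master identity is correct and is, up to the choice of cutoff, the same reduction the paper uses: the paper majorizes the indicator of $[1,K]$ by a smooth function $\Phi$ and applies Poisson summation in $k_1$ and $k_2$, which produces exactly your bilinear form in $T(s_1,s_2,a;q)$ but with the frequencies effectively truncated at $|s_i|\le q^{1+\varepsilon}/K$ and with flat weights, whereas your sharp cutoff leaves you with the Dirichlet-kernel weights $B(s)$ spread over all of $\Z_q$. Both roads lead to the same place, and your diagonal analysis (Parseval for the $qK/D$ piece, the inequality $D\,g_D\le q$ for the $K^2$ piece) is correct and recovers the first two terms of the lemma.

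The off-diagonal treatment, however, has a genuine error at exactly the point you flag as the main obstacle. The sum you reduce to, $q^{1/2}\sum_{D\mid q}g_D^{1/2}D^{-3/2}$ with $g_D=\gcd(a,q/D)$, is \emph{not} $q^{1/2+o(1)}$: its $D=1$ term is already $q^{1/2}\gcd(a,q)^{1/2}$, and your proposed split at $D=q/g$ does not help, since for $D\le q/g$ the bound $g_D\le g$ only gives $\sum_{D\le q/g}g_D^{1/2}D^{-3/2}\le g^{1/2}\sum_{D}D^{-3/2}\asymp g^{1/2}$. As written, the argument therefore proves the lemma with $q^{1/2}$ replaced by $q^{1/2}\gcd(a,q)^{1/2}$, which is strictly weaker whenever $\gcd(a,q)>1$. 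The fix is already latent in your own ``sharper decomposition'': use $\gcd(D(\sigma_1-\sigma_2),a,q/D)\le h_D\,\gcd(\sigma_1-\sigma_2,g_D)$ with $h_D=\gcd(D,a,q/D)$, and then actually \emph{average} the factor $\gcd(\sigma_1-\sigma_2,g_D)^{1/2}$ over the difference $\sigma_1-\sigma_2$ (group by $e\mid g_D$ and count how many differences $e$ divides; against the $(q/D)^2$ off-diagonal count this averaging costs only $q^{o(1)}$). What remains is $q^{1/2}\sum_{D\mid q}h_D^{1/2}D^{-3/2}\le q^{1/2}\sum_{D\mid q}D^{-1}\ll q^{1/2+o(1)}$, since $h_D\le D$. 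It is the divisor sum in $h_D$, not in $g_D$, that closes the argument; with that substitution your proof goes through and matches the paper's bound.
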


\begin{proof}
Let $\Phi$ be a non-negative smooth function which dominates the indicator function of $[-1,1]$.
Trivially we write
$$
J_q(a,K)\ll \mathop{\sum\sum}_{{k}^{-1}_1-{k}^{-1}_2\equiv a\bmod q}
\Phi\(\frac{k_1}{K}\)\Phi\(\frac{k_2}{K}\).
$$
Applying Poisson summation to each of $k_1, k_2 \bmod{q}$, we find
$$
J_{q}(a,K)\ll\frac{K^2}{q^2}
\mathop{\sum\sum}_{m,n\in\Z}T(-m,n,a;q)\widehat{\Phi}\(\frac{mK}{q}\)\widehat{\Phi}\(\frac{nK}{q}\),
$$
where $T(-m,n,a;q)$ is defined by~\eqref{eq:exponentialsum-T}.
In view of the decay~\eqref{eq:Fouriertransform-decay}, we may truncate the $m$ and $n$-sums at $H=q^{1+\varepsilon}/K$ with an arbitrary $\varepsilon>0$ at a cost of a negligible error term. Hence
$$
J_{q}(a,K)\ll\frac{K^2}{q^2}
\mathop{\sum\sum}_{0\leqslant |m|,|n|\leqslant H}|T(m,n,a;q)|+1.
$$
The result now follows immediately from Lemma~\ref{lem:T-upperbound-q}.
\end{proof}

\subsection{Bounds on average}
We may also improve Lemmas~\ref{lem:Jq(a,K)-Heath-Brown} and~\ref{lem:Jq(a,K)-new} on average, provided that the length of averaging is not too short.

\begin{lemma}\label{lem:Jq(a,K)-onaverage}
Let $q$ be a positive integer and let  $\gcd(a,q)=1$. For any positive integers $N,K\leqslant q$, we have
$$
\sum_{1\leqslant n\leqslant N} J_q(an,K)\le \(\frac{K^2N^{1/2}}{q^{1/2}}+K\)q^{o(1)}.
$$

\end{lemma}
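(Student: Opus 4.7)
The plan is to exploit the averaging over $n$ via Cauchy--Schwarz, then reduce the resulting fourth-moment count to an exponential sum amenable to Lemma~\ref{lem:T-upperbound-q}.

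First, Cauchy--Schwarz yields
$$
\Bigl(\sum_{n=1}^N J_q(an,K)\Bigr)^2 \le N \sum_{n=1}^N J_q(an,K)^2.
$$
Expanding the square, $J_q(an,K)^2$ counts quadruples $(k_1,k_2,k_3,k_4)\in[1,K]^4$ with $k_1^{-1} - k_2^{-1}\equiv k_3^{-1}-k_4^{-1}\equiv an\pmod q$. Since $\gcd(a,q)=1$ and $N\le q$, for any such quadruple the value of $n$ is determined modulo $q$ (so at most one valid $n\in[1,N]$ exists), giving
$$
\sum_{n=1}^N J_q(an,K)^2 \le \sum_{m\in a\cdot[1,N]\bmod q} J_q(m,K)^2.
$$

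Next I would Fourier-expand the indicator of the arithmetic progression $a\cdot[1,N]\subseteq\Z_q$ and apply Parseval. Setting $A_h = \sum_{k\in[1,K],\,\gcd(k,q)=1}\eq(hk^{-1})$, the identity $J_q(m,K) = q^{-1}\sum_h|A_h|^2\eq(-hm)$ yields
$$
\sum_{m\in a\cdot[1,N]\bmod q} J_q(m,K)^2 = \frac{1}{q^2}\sum_{\ell}F_a(\ell)\sum_h|A_h|^2|A_{h-\ell}|^2,
$$
where $F_a(\ell)=\sum_{y=1}^N\eq(\ell ay)$. The term $\ell=0$ gives a diagonal contribution $NE/q$ with $E = \sum_m J_q(m,K)^2$, and the remaining $\ell\neq 0$ provide the Fourier error.

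Thirdly, I would apply Poisson summation (completing the sum, following the proof of Lemma~\ref{lem:Jq(a,K)-new}) to express each $A_h$ as an incomplete Kloosterman sum. The fourfold product $|A_h|^2|A_{h-\ell}|^2$ then becomes a sum of products of four Kloosterman sums; collecting by the $h$-variable reduces the whole expression to a bilinear combination of the $T(\cdot,\cdot,\cdot;q)$ sums defined in \eqref{eq:exponentialsum-T}, to which Lemma~\ref{lem:T-upperbound-q} supplies a square-root cancellation of size $q^{1/2+o(1)}$ for generic frequency configurations.

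The main obstacle will be the careful bookkeeping of the diagonal contributions in the resulting multiple sum: one must isolate the truly cancelling off-diagonal frequencies---which, via Lemma~\ref{lem:T-upperbound-q}, should contribute at most $K^2N^{1/2}/q^{1/2}\cdot q^{o(1)}$---from the partly-degenerate diagonal strata, which ought to produce precisely the $K$-term of the claimed bound rather than a spurious $KN^{1/2}$. Composite moduli $q$ introduce extra $\gcd$ factors in Lemma~\ref{lem:T-upperbound-q} which are absorbed by standard divisor-sum estimates into the $q^{o(1)}$.
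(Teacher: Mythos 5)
Your proposal has a genuine gap, and it appears already at the first step. Applying Cauchy--Schwarz over $n$ gives $J^2\leqslant N\sum_{n\leqslant N}J_q(an,K)^2$ with $J=\sum_{n\leqslant N}J_q(an,K)$, and the diagonal quadruples $(k_1,k_2)=(k_3,k_4)$ contribute exactly $J$ to that second moment (indeed $J_q(an,K)^2\geqslant J_q(an,K)$ for non-negative integers). Writing $X$ for the off-diagonal count, you therefore only obtain $J^2\leqslant N(J+X)$, i.e.\ $J\ll N+\sqrt{NX}$, and the floor $N$ cannot be removed by this arrangement. That is weaker than the claim in a wide range: for $K=q^{1/4}$ and $N=q^{1/2}$ the lemma asserts $J\leqslant q^{1/4+o(1)}$, while your method cannot go below $N=q^{1/2}$, which is already the trivial bound $K^2$. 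A second, independent problem is the main ($\ell=0$) term $NE/q$: to recover the lemma you would need the additive energy $E=\sum_m J_q(m,K)^2$ of the set $\{k^{-1}:k\leqslant K\}$ to satisfy $E\ll(K^4/q+K^2)q^{o(1)}$, an optimal energy bound that is not known and does not follow from Lemma~\ref{lem:T-upperbound-q}: that lemma controls the Fourier transform of a product of \emph{two} Kloosterman sums, whereas $\sum_h|A_h|^2|A_{h-\ell}|^2$ produces products of four, and the asserted reduction ``collecting by $h$'' to a bilinear combination of the sums \eqref{eq:exponentialsum-T} with square-root cancellation is not justified. Using the tools actually available here (the pointwise bound of Lemma~\ref{lem:Jq(a,K)-Heath-Brown} applied to $\max_mJ_q(m,K)$) one only gets $E\ll(K^{7/2}q^{-1/2}+K^4q^{-1}+K^2)q^{o(1)}$, and the term $K^{7/2}q^{-1/2}$ exceeds $K^4/q$ throughout the range $K<q$.

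The paper's proof arranges the second moment so that its diagonal produces the $K$-term rather than an $N$-term, and it uses no Fourier analysis of Kloosterman sums at all. One first thickens $n$ into a difference: every $n\in[1,N]$ has $\gg N$ representations as $n_1-n_2$ with $n_1,n_2\in[1,2N]$, whence $J\ll I/N$ where $I$ counts solutions of $k_1^{-1}-k_2^{-1}\equiv a(n_1-n_2)\pmod q$. Then $I=\sum_{\lambda\in\Z_q}I(\lambda)^2$ with $I(\lambda)=\#\{(k,n)\in[1,K]\times[1,2N]:\,k^{-1}+an\equiv\lambda\bmod q\}$, so the relevant $\ell^1$-mass is $\sum_\lambda I(\lambda)\ll KN$, which after division by $N$ yields exactly the term $K$. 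The only remaining input is a \emph{pointwise} bound $I(\lambda)\leqslant(1+K(N/q)^{1/2})q^{o(1)}$, proved by the Cilleruelo--Garaev pigeonhole device: choose $u,t$ with $a^{-1}\lambda u\equiv t\bmod q$, $|t|\leqslant T$, $0<|u|\leqslant 2q/T$, reduce the congruence to $k(t-nu)=b+qr$ with $|r|$ small, count via the divisor function, and optimise $T=(qN)^{1/2}$. If you want to salvage your scheme, the essential correction is to take the second moment over the hybrid variable $\lambda=k^{-1}+an$ (after doubling the $n$-variable), not over $n$, and to replace the energy of inverses by this pointwise concentration estimate.
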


\begin{proof}
Denote by $J$ the sum in question, which counts the number of solutions to
$$
k_1^{-1}- k_2^{-1}\equiv an \bmod q
$$
with $1\leqslant k_1,k_2\leqslant K$ and $1\leqslant n \leqslant N$. Moreover, denote by $I$ the number of solutions to
$$
k_1^{-1}- k_2^{-1}\equiv a(n_1-n_2) \bmod q
$$
in $1\leqslant k_1,k_2\leqslant K$ and $1\leqslant n_1,n_2\leqslant 2N$.
Clearly, we have
\begin{equation}\label{eq:J-I}
J\ll I/N
\end{equation}
since for each $1\leqslant n\leqslant N$ we have
$$
\#\{(n_1,n_2) \in [1, 2N]^2:~ n=n_1-n_2\} \gg N.
$$

The problem now reduces to bounding $I$ from above. For each $\lambda\in\Z_q$, define
\begin{equation}\label{eq:I(lambda)}
I(\lambda)=\#\{(k,n)\in [1,K]\times [1,2N]:~k^{-1}+an\equiv \lambda \bmod q\},
\end{equation}
so that
\begin{equation}
\label{eq:Idef1}
I=\sum_{\lambda \in \Z_q}I(\lambda)^2.
\end{equation}

Since
\begin{equation}\label{eq:I(lambda)-l1norm}
\sum_{\lambda \in \Z_q}I(\lambda)\ll KN,
\end{equation}
it suffices to produce an upper bound for $I(\lambda)$ uniformly in $\lambda \in \Z_q$.

Recall the definition~\eqref{eq:I(lambda)}. It is useful to note that $I(\lambda)$ counts the number of solutions to the congruence
\begin{equation}\label{eq:I(lambda)-2}
(ak)^{-1}+n\equiv a^{-1}\lambda \bmod q, \  1\leqslant k \leqslant K, \   1\leqslant n \leqslant 2N.
\end{equation}
We now adopt some ideas of Cilleruelo and Garaev~\cite[Theorem~1]{CG11}. By the Dirichlet pigeonhole principle, we  can choose integers $t$ and $u$ satisfying
$$
a^{-1}\lambda u\equiv t\bmod q, \quad 0\leqslant |t|\leqslant T, \quad 0 < |u|\leqslant \frac{2q}{T},
$$
where $T>2$ is a parameter to be determined later.
In this way, we infer from~\eqref{eq:I(lambda)-2} that
$$
k(t-nu)\equiv a^{-1}u\bmod q.
$$
Denote by  $a^{-1}u\equiv b\bmod q$ with $1\le b\le q-1$.
We conclude there exists some integer $r$ such that
$$
0\neq k(t-nu)=b+qr,\quad |r|\leqslant 2R
$$
with
$$
R=\frac{KT+KNq/T+q}{q}=1+\frac{KT}{q}+\frac{KN}{T}.
$$

Collecting the above arguments, we derive that
$$
I(\lambda)
\leqslant \mathop{\sum\sum\sum}_{\substack{k\leqslant K,~n\leqslant2N,~|r|\leqslant 2R\\ k(t-nu)= b+qr} }1
\ll\sum_{|r|\leqslant 2R}\tau(|b+qr|)\le Rq^{o(1)}.
$$
%where we choose an appropriate $b$ with $1\leqslant b\leqslant q$ such that $a^{-1}u\equiv b\bmod q$ and $b+qr\neq0$.
Taking $T=(qN)^{1/2}$, we find
$$
I(\lambda)\le (1+K(N/q)^{1/2})q^{o(1)},
$$
from which and~\eqref{eq:I(lambda)-l1norm} it follows that
$$
I\le KN(1+K(N/q)^{1/2})q^{o(1)}.
$$
This readily proves the lemma in view of~\eqref{eq:J-I}.
\end{proof}

We note that Karatsuba~\cite[Lemma~8]{Kar}  has previously estimated  the quantity $I$ defined by~\eqref{eq:Idef1} as
\begin{equation}\label{eq:I-Kar}
I \le KN(1 + N/K +K^{1/2}  N^{3/2} q^{-1/2})q^{o(1)},
\end{equation}
which implies a sharper bound compared to Lemma~\ref{lem:Jq(a,K)-onaverage} for smaller values of $N$. However, for such ranges of parameters, the bound~\eqref{eq:I-Kar} is comparable to an application of Lemma~\ref{lem:Jq(a,K)-Heath-Brown}.

\subsection{A slight extension}
Let $q$ be a positive integer and $r\mid q.$ Define
\begin{equation}\label{eq:Jq(a,K;r,c)}
\begin{split}
J_q(a,K&; r,c)\\
& =\#\left\{(k_1,k_2)\in(\Z_q^*)^2:
\begin{array}{c}
k_1^{-1}-k_2^{-1}\equiv a \rmod q,\\
\langle ck_1\rangle_r,\langle ck_2\rangle_r\leqslant K
\end{array}\right\}.
\end{split} 
\end{equation}

Clearly, we have $J_q(a,K;q,1)=J_q(a,K)$ for all $K\leqslant q$.
In general, we may conclude the following inequality, illustrating an upper bound for $J_q(a,K;r,c)$ in terms of the original counting function~\eqref{eq:Jq(a,K)}.

\begin{lemma}\label{lem:Jq(a,K;r,c)}
Let $q$ be a positive integer and $r\mid q$. For all $K\geqslant1$ and $c\in\Z_r^*,$ we have
$$
J_q(a,K;r,c)\leqslant \frac{q}{r}J_r(c^{-1}a,K).
$$
\end{lemma}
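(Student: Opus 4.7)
The plan is to construct an explicit map $\pi$ from the pairs counted by $J_q(a,K;r,c)$ to those counted by $J_r(c^{-1}a,K)$, and then to show that every fibre of $\pi$ has size at most $q/r$. Concretely, I define
\[
\pi\colon (k_1,k_2)\longmapsto (t_1,t_2):=(\langle ck_1\rangle_r,\langle ck_2\rangle_r).
\]
Since $r\mid q$ and $k_i\in\Z_q^*$, one has $\gcd(k_i,r)=1$, so $t_i\in\Z_r^*$, and the hypothesis $\langle ck_i\rangle_r\le K$ places $t_i$ in $[1,K]$.

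First I would verify that $\pi$ lands in the target set. Reducing the congruence $k_1^{-1}-k_2^{-1}\equiv a\bmod q$ modulo $r$ (legal since $r\mid q$) and using the identity $k_i^{-1}\equiv c\,t_i^{-1}\bmod r$ gives
\[
c(t_1^{-1}-t_2^{-1})\equiv a\bmod r,\qquad\text{i.e.,}\qquad t_1^{-1}-t_2^{-1}\equiv c^{-1}a\bmod r,
\]
so $(t_1,t_2)$ is indeed counted by $J_r(c^{-1}a,K)$.

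The main (and essentially only non-trivial) step is the fibre bound. Given $(t_1,t_2)$ in the image, any preimage must satisfy $k_i\equiv c^{-1}t_i\bmod r$. Since $r\mid q$, each residue class modulo $r$ contains exactly $q/r$ residues modulo $q$, giving at most $q/r$ choices for $k_1\in\Z_q^*$. Once $k_1$ is fixed, the constraint $k_2^{-1}\equiv k_1^{-1}-a\bmod q$ determines $k_2$ at most uniquely in $\Z_q^*$. A short check, using the already-derived congruence $t_1^{-1}-t_2^{-1}\equiv c^{-1}a\bmod r$, confirms that this $k_2$ automatically reduces to $c^{-1}t_2\bmod r$, so the condition $\langle ck_2\rangle_r=t_2\le K$ is preserved and no further restriction on the fibre size is needed.

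Summing the bound $q/r$ over all admissible targets $(t_1,t_2)$ then gives $J_q(a,K;r,c)\le (q/r)\,J_r(c^{-1}a,K)$. The only real pitfall is the bookkeeping required to check that every inverse appearing lives in the correct unit group, which follows routinely from $r\mid q$, $k_i\in\Z_q^*$, and $c\in\Z_r^*$; otherwise the argument is a clean fibre-counting estimate.
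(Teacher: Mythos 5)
Your proof is correct and is essentially the paper's argument in different clothing: the paper splits $k_i=u_i+rv_i$ with $u_i\bmod r$, $v_i\bmod q/r$, fixes $u_1,u_2,v_1$ and observes that $v_2$ is then determined at most uniquely, which is exactly your fibre bound for the reduction map $(k_1,k_2)\mapsto(\langle ck_1\rangle_r,\langle ck_2\rangle_r)$, followed by the same change of variable by $c$. No gaps; the remark that $k_2$ automatically reduces to $c^{-1}t_2\bmod r$ is not even needed for the upper bound, since containment of the fibre in a set of size $q/r$ already suffices.
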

\proof
Note that $J_q(a,K;r,c)$ is exactly the number of quadruples $(u_1,u_2,v_1,v_2)$ with
$$
u_1,u_2\bmod r,~v_1,v_2\bmod{q/r},~\gcd((u_1+rv_1)(u_2+rv_2),q)=1,
$$
$$
(u_1+rv_1)^{-1}-(u_2+rv_2)^{-1}\equiv a \bmod q,
$$
$$
\langle cu_1\rangle_r,\langle cu_2\rangle_r\leqslant K.
$$
The congruence condition also indicates that
$u_1^{-1}-u_2^{-1}\equiv a \bmod r.$
For each fixed $u_2,u_2\bmod r$ and $v_1\bmod{q/r}$, there should exist at most one
$v_2\bmod{q/r}$ with
$$
\gcd(u_2+rv_2,q)=1, ~
(u_1+rv_1)^{-1}-(u_2+rv_2)^{-1}\equiv a \bmod q.
$$
Therefore, we infer
$$
J_q(a,K;r,c)\leqslant \frac{q}{r}\#\left\{(u_1,u_2)\in(\Z_r^*)^2:
\begin{array}{c}
u_1^{-1}-u_2^{-1}\equiv a \bmod r,\\
\langle cu_1\rangle_r,\langle cu_2\rangle_r\leqslant K
\end{array}\right\}.
$$
Since $c\in\Z_r^*$, we complete the proof after a change of variable,
\endproof

\section{Bilinear forms with Kloosterman sums over intervals}

\subsection{Preliminary comments}
 Following the approach of~\cite{Sh19}, we relate Type~I
bilinear forms with complete  Kloosterman sums to Type~II
bilinear forms with incomplete  Kloosterman sums. Hence we first prove
Theorem~\ref{thm:TypeII-incomplete-q}, from which Theorem~\ref{thm:TypeI-complete-q} is derived.

\subsection{Proof of Theorem~\ref{thm:TypeII-incomplete-q}}\label{sec:TypeII-incomplete-proof}
Without loss of generality, we assume $\bgamma$ is bounded.
By the Cauchy--Schwarz inequality
$$
|\WacIKqr|^2
\leqslant \|\balpha\|_2^2\sum_{m\in \Z}\Phi\(\frac{m-M_0}{M}\)\left|\sum_{\langle ck\rangle_r\leqslant K}
\gamma_k\eq(amk^{-1})\right|^2,
$$
where $\Phi$ is a non-negative smooth function which dominates the indicator function of $[-1,1]$
(and $M_0$ is as in~\eqref{eq:Interval-IJ}).
Squaring out and interchanging summations we get
$$
|\WacIKqr|^2
\leqslant \|\balpha\|_2^2\mathop{\sum\sum}_{\langle ck_1\rangle_r,\langle ck_2\rangle_r\leqslant K}
\gamma_{k_1}\overline{\gamma}_{k_2}\Sigma(k_1,k_2)
$$
with
$$
\Sigma(k_1,k_2)=\sum_{m\in \Z}\Phi\(\frac{m-M_0}{M}\)\eq(am(k_1^{-1}-k_2^{-1})).
$$

From Poission summation it follows that
$$
\Sigma(k_1,k_2)=M\sum_{n\equiv a(k_2^{-1}-k_1^{-1})\bmod q}\widehat{\Phi}\(\frac{Mn}{q}\)\eq(-nM_0).
$$
In view of the decay~\eqref{eq:Fouriertransform-decay}, the contributions from those $n$ with $|n|>q^{1+\varepsilon}/M$ are negligible, so that
$$
\Sigma(k_1,k_2)
=M\sum_{\substack{0\leqslant |n|\leqslant q^{1+\varepsilon}/M\\ n\equiv a(k_2^{-1}-k_1^{-1})\bmod q}}\widehat{\Phi}\(\frac{Mn}{q}\)\eq(-nM_0)+O(K^{-2}),
$$
giving
$$
\WacIKqr^2
\ll\|\balpha\|_2^2M\sum_{1\leqslant |n|\leqslant q^{1+\varepsilon}/M}J_q\(a^{-1}n,K;r,c\)+\|\balpha\|_2^2MKq/r,
$$
where $J_q(a^{-1}n,K;r,c)$ is defined as~\eqref{eq:Jq(a,K;r,c)}.
Note that the first term does not appear unless $M\leqslant q^{1+2\varepsilon}$, which we henceforth assume.
In view of Lemma~\ref{lem:Jq(a,K;r,c)}, we further have
\begin{align*}
\WacIKqr^2
\ll \|\balpha\|_2^2Mq/r\sum_{1\leqslant |n|\leqslant q^{1+\varepsilon}/M}J_r\((ac)^{-1}n,K\)& \\
+\|\balpha\|_2^2M&Kq/r.
\end{align*}  

To conclude upper bounds for $\WacIKqr$, it suffices to invoke individual or averaged estimates for $J_r(\cdot,K)$. We proceed on a case by case basis depending on if $q^{1+\varepsilon}/M\le r$ or not.

Suppose first that $q^{1+\varepsilon}/M>r$. We have
$$
\WacIKqr^2
\ll \|\balpha\|_2^2\frac{q^{2+\varepsilon}}{r^2}\sum_{n=1}^{r}J_r((ac)^{-1}n,K)+\|\balpha\|_2^2MKq/r.
$$
%It follows from~\eqref{eq:Jq(a,K)-trivialbound} that 
From the trivial observation 
$$
\sum_{n=1}^{r}J_r((ac)^{-1}n,K) = 
\sum_{n=1}^{r}J_r(n,K)  \le K^2
$$
we conclude that 
$$
\WacIKqr^2
\ll \|\balpha\|_2^2\frac{q^{2+\varepsilon}K^2}{r^2}+\|\balpha\|_2^2MKq/r.
$$
Due to the presence of terms $M^{-1/2}$ in~\eqref{eq:Delta21},~\eqref{eq:Delta22} and~\eqref{eq:Delta23} we see that Theorem~\ref{thm:TypeII-incomplete-q} is satisfied in this case.

Suppose next that $q^{1+\varepsilon}/M\le r$ and note $K\leqslant r$.
Since $\varepsilon>0$ is arbitrary, it follows from
Lemmas~\ref{lem:Jq(a,K)-onaverage}  and~\ref{lem:Jq(a,K;r,c)} that
$$
|\WacIKqr|^2
\le \|\balpha\|_2^2(K^2M^{1/2}(q/r)^{3/2}+MKq/r)q^{o(1)},
$$
which gives the choice~\eqref{eq:Delta21}. The remaining two choices~\eqref{eq:Delta22} and~\eqref{eq:Delta23} are produced by applying Lemmas~\ref{lem:Jq(a,K)-Heath-Brown}
and~\ref{lem:Jq(a,K)-new} in place of
%%\ccc{Lemma~\ref{lem:Jq(a,K;r,c)},}
Lemma~\ref{lem:Jq(a,K)-onaverage},
 respectively.

We now complete the proof of Theorem~\ref{thm:TypeII-incomplete-q}.

\subsection{Proof of Theorem~\ref{thm:TypeI-complete-q}}\label{sec:TypeI-complete-proof}

We now prove Theorem~\ref{thm:TypeI-complete-q}
as a consequence of Theorem~\ref{thm:TypeII-incomplete-q}. We assume $N\leqslant q$, otherwise we may choose $\Delta_1(M,N,q,d)=q^{1/2}/N$, which is better than $N^{-1/2}$.

Following the approach in~\cite[Theorem~2.1]{Sh19} we put $I=\rf{\log (N/2)}$ and consider $2(I+1)$  sets
\begin{equation}\label{eq:Xi,pm}
\begin{split}
\cX_0&=\{x\in\Z_q^*:a_1x\equiv y\bmod{q_1}\text{~for some~}y\in[1,q_1/N]\}, \\
\cX_i& = \{x\in\Z_q^*: a_1x\equiv y\bmod{q_1}\\
& \qquad \qquad \qquad \qquad \text{~for some~}y\in(e^{i-1}q_1/N,e^iq_1/N]\},
\end{split}
\end{equation}
for $i = 1, \ldots, I$, where $a_1=a/d$, $q_1=q/d$  with $d=\gcd(a,q)$.
By~\cite[Equation~(5.3)]{Sh19} we have
$$
\Sqa\ll \sum_\pm\sum_{0\leqslant i\leqslant I}|S_i^\pm|,
$$
where for $i=0,\ldots,I$,
$$
S_i^{\pm}=\sum_{m\in\cI} \sum_{x \in \cX_i} \alpha_m\gamma_x\eq(\pm m\xbar)
$$
with some  complex weights $\gamma_x$ satisfying
$$
\gamma_x \ll e^{-i} N, 
$$
and $\sum_\pm$ means that sum includes both $S_i^+$ and $S_i^-$. 
Each sum $S_i ^{\pm}$ is of the type $ \WacIKqr$
with $K= e^{i} q_1/N$ and $r=q_1$.
We are then in a position to apply Theorem~\ref{thm:TypeII-incomplete-q}, and the first choice in~\eqref{eq:Delta21} yields
$$
S_i^{\pm}\ll  \|\balpha\|_2 M^{1/2}q((dM)^{-1/4}+(N/q)^{1/2})q^{o(1)}.
$$

Summing over $i$, we obtain the choice~\eqref{eq:Delta11} of $\Delta_1(M,N,q,d)$ for Theorem~\ref{thm:TypeI-complete-q}.
The other two choices~\eqref{eq:Delta12} and~\eqref{eq:Delta13} of $\Delta_1(M,N,q,d)$ correspond to applications of Theorem~\ref{thm:TypeII-incomplete-q} with the remaining choices~\eqref{eq:Delta22} and~\eqref{eq:Delta23} of
$\Delta_2(M,K,q,r)$.

\section{Some results from additive combinatorics}
\label{sec:newbilinearforms-proofs}

\subsection{Preliminary comments}
We next present some preliminaries for the proof of Theorems~\ref{thm:TypeII-incomplete-arbitraryset1} and~\ref{thm:TypeII-incomplete-arbitraryset2}. The results of this section are specialised to $\F_p$ instead of $\Z_q$ for two reasons. The first is our use of various results from additive combinatorics (see Lemma~\ref{lem:MPSS}) and the second is that we require uniform estimates for counting zeros of polynomials in one variable over finite fields (see~\eqref{eq:muui} below).

 To begin with, we state a series of results on the decomposition of subsets and counting problems in $\F_p$.

\subsection{Energy and special partitions of sets}
For $\cA,\cB\subseteq \F_p$, as usual we define the \emph{difference set}
$$
\cA-\cB=\{a-b:~(a,b)\in\cA\times\cB\}
$$
and the \emph{additive energy}
$$
E(\cA ,\cB)=\#\{(a_1,a_2,b_1,b_2)\in  \cA^2 \times \cB^2:~a_1-a_2=b_1-b_2\}.
$$
We also write  $E(\cA,\cA)=E(\cA)$ for abbreviation.

We quote a special case of  Rudnev,  Shkredov and   Stevens~\cite[Theorem~2.15]{RSS20} with $k=2$ therein.

\begin{lemma}\label{lem:RSS}
Let $L\geqslant1$. For any subset $\cA\subseteq \F_p$ with
$$
E(\cA)\ge \frac{(\#\cA)^3}{L},
$$
there must exist some $\cB\subseteq \cA$ and $\cD\subseteq \cA-\cA$ satisfying
$$
\# \cB\ge \frac{\# \cA}{16L} \mand  \# \cD\le 16L\# \cA,
$$
such that for each $b\in \cB$ we have
$$
\# \{ (a, d)\in \cA\times  \cD:~b=a-d\}\ge \frac{\# \cA}{4L}.
$$
\end{lemma}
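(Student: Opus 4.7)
The plan is a double popularity argument on differences. Write $n=\#\cA$ and, for $d\in\F_p$, set $r(d)=\#\{(a,a')\in\cA^2:a-a'=d\}$, so that $\sum_d r(d)=n^2$ and $E(\cA)=\sum_d r(d)^2$.

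First I would extract the popular differences by setting
$$\cD=\{d\in\cA-\cA:r(d)\geq n/(8L)\}.$$
Since each element of $\cD$ contributes at least $n/(8L)$ to $\sum_d r(d)=n^2$, one gets immediately $\#\cD\leq 8Ln\leq 16Ln$. To see that $\cD$ also captures nearly all of the energy, bound the tail by
$$\sum_{d\notin\cD}r(d)^2\leq \frac{n}{8L}\sum_d r(d)=\frac{n^3}{8L},$$
and combine with the hypothesis $E(\cA)\geq n^3/L$ to obtain $\sum_{d\in\cD}r(d)^2\geq 7n^3/(8L)$. Using the crude bound $r(d)\leq n$, this upgrades to
$$\sum_{d\in\cD}r(d)\geq \frac{1}{n}\sum_{d\in\cD}r(d)^2\geq \frac{7n^2}{8L}.$$

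Next I would carry out a second popularity step, now over $\cA$. For $b\in\cA$ put $N(b)=\#\{(a,d)\in\cA\times\cD:a-d=b\}$; then
$$\sum_{b\in\cA}N(b)=\sum_{d\in\cD}\#\{a\in\cA:a-d\in\cA\}=\sum_{d\in\cD}r(d)\geq \frac{7n^2}{8L}.$$
Take $\cB=\{b\in\cA:N(b)\geq n/(4L)\}$. For $b\in\cB$ use the trivial bound $N(b)\leq n$ (since $a\in\cA$ determines $d=a-b$), and for $b\in\cA\setminus\cB$ use $N(b)<n/(4L)$. Splitting the sum gives
$$n\,\#\cB+\frac{n^2}{4L}>\frac{7n^2}{8L},$$
whence $\#\cB\geq 5n/(8L)\geq n/(16L)$, as required. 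By construction every $b\in\cB$ satisfies the lower bound $\#\{(a,d)\in\cA\times\cD:b=a-d\}\geq n/(4L)$ asked for in the lemma.

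The only point that requires genuine care is the coupled choice of the two thresholds $n/(8L)$ and $n/(4L)$: raising either one shrinks the surviving energy (or $N$-sum), while lowering it weakens the popularity conclusion. Crucially, the inequality $r(d)\leq n$ must be invoked in the first popularity step when converting $\sum r(d)^2$ into $\sum r(d)$, so that the final loss in $\#\cB$ is only of order $L$ rather than $L^2$. Beyond this calibration, the argument is entirely elementary double counting and uses nothing from additive combinatorics beyond the identity $E(\cA)=\sum_d r(d)^2$.
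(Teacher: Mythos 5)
Your argument is correct: the identity $E(\cA)=\sum_d r(d)^2$, the tail estimate $\sum_{d\notin\cD}r(d)^2\le \frac{n}{8L}\sum_d r(d)$, the passage from $\sum_{d\in\cD}r(d)^2$ to $\sum_{d\in\cD}r(d)$ via $r(d)\le n$, and the final split $\sum_{b\in\cA}N(b)\le n\,\#\cB+\frac{n^2}{4L}$ are all valid, and they deliver $\#\cD\le 8Ln$ and $\#\cB\ge 5n/(8L)$, which are in fact slightly stronger than the constants $16L$ claimed in the statement. Note, however, that the paper does not prove this lemma at all: it is quoted verbatim as the special case $k=2$ of Rudnev, Shkredov and Stevens~\cite[Theorem~2.15]{RSS20}, whose general form concerns the higher energies $E_k(\cA)=\sum_d r(d)^k$ and requires a more involved iterative argument. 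So your route is genuinely different in the sense that you replace an external citation by a short, self-contained double popularity (double counting) argument, which is exactly what the $k=2$ case reduces to. What the citation buys the paper is uniformity over all $k$ and consistency with the source of Lemma~\ref{lem:RSS-Iter}; what your proof buys is transparency, explicit (and marginally better) constants, and independence from~\cite{RSS20}. The only cosmetic point is that your displayed inequality $n\,\#\cB+\frac{n^2}{4L}>\frac{7n^2}{8L}$ should formally be a non-strict inequality when $\cB=\cA$, but this does not affect the conclusion.
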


Lemma~\ref{lem:RSS} guarantees the existence of $\cB\subseteq\cA$ and $\cD\subseteq \cA-\cA$ such that $\cA\cap(b+\cD)$ is suitably large for any $b\in\cB$, provided that the energy $E(\cA)$ can be bounded from below. This means a certain number of elements in $\cA$ can be represented by $b+d$ with $d\in\cB$ and $d\in\cD$, which  creates more room for possible cancellations in bilinear forms to be studied later.

Iterating Lemma~\ref{lem:RSS} gives the following decomposition of an arbitrary set $\cA\subseteq \F_p$.

\begin{lemma}\label{lem:RSS-Iter}
For any $L\geqslant1$ and $\cA\subseteq \F_p$, there exist disjoint sets $\cA_0,\cA_1,\dots,\cA_J\subseteq \F_p$ satisfying
$$
\cA=\bigsqcup_{0\leqslant j\leqslant J}\cA_j
$$
with $J\ll L$, such that both of the following hold:
\begin{enumerate}[$(i)$]
\item
$$
E(\cA_0)\ll\frac{(\# \cA)^3}{L},\quad \min_{1\leqslant j\leqslant J}\#\cA_j\gg\frac{\#\cA}{L};
$$
\item
for each $1\leqslant j\leqslant J$, there exists $\cD_j\subseteq \cA-\cA$ satisfying
$$
\#\cD_j\ll L\#\cA,
$$
such that for each $a_j\in \cA_j$, we have
\begin{equation}\label{eq:shiftedcounting-lowerbound}
\# \{ (a,d_j) \in \cA \times \cD_j :~a_j=a-d_j\}\gg \frac{\#\cA}{L}.
\end{equation}
\end{enumerate}
\end{lemma}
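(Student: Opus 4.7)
The plan is to peel off structured pieces of $\cA$ by iterating Lemma~\ref{lem:RSS} on a shrinking residual set. I will initialize $\cR^{(0)}:=\cA$. At stage $j$: if $E(\cR^{(j)})\leq(\#\cA)^3/L$, I declare $\cA_0:=\cR^{(j)}$ and stop; otherwise I apply Lemma~\ref{lem:RSS} to $\cR^{(j)}$ to extract a structured subset $\cA_{j+1}\subseteq\cR^{(j)}$ together with its associated difference set, which will play the role of $\cD_{j+1}\subseteq\cR^{(j)}-\cR^{(j)}\subseteq\cA-\cA$, and then pass to $\cR^{(j+1)}:=\cR^{(j)}\setminus\cA_{j+1}$.

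The main obstacle I need to navigate is that Lemma~\ref{lem:RSS} phrases all of its conclusions in terms of the cardinality of its input set, namely $\#\cR^{(j)}$, whereas conditions (i) and (ii) demand bounds expressed uniformly in the original $\#\cA$ and the fixed parameter $L$. To bridge this I will always invoke Lemma~\ref{lem:RSS} on $\cR^{(j)}$ not with $L$ itself but with the sharp parameter
\[
L_j \;:=\; \frac{(\#\cR^{(j)})^3}{E(\cR^{(j)})},
\]
so that its hypothesis becomes an equality. The trivial inequality $E(\cR^{(j)})\leq(\#\cR^{(j)})^3$ gives $L_j\geq 1$, while the running assumption $E(\cR^{(j)})>(\#\cA)^3/L$ combined with $\#\cR^{(j)}\leq\#\cA$ forces $L_j<L$. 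Rewriting the output of Lemma~\ref{lem:RSS} in the form $\#\cR^{(j)}/L_j=E(\cR^{(j)})/(\#\cR^{(j)})^2$ and substituting these two bounds collapses $\#\cA_{j+1}\geq \#\cR^{(j)}/(16L_j)$ to $\#\cA_{j+1}\geq \#\cA/(16L)$; collapses $\#\cD_{j+1}\leq 16L_j\#\cR^{(j)}$ to $\#\cD_{j+1}\leq 16L\#\cA$; and collapses the shift-counting lower bound $\#\cR^{(j)}/(4L_j)$ to $\gg \#\cA/L$. Since $\cR^{(j)}\subseteq\cA$, the shift count over $\cR^{(j)}\times\cD_{j+1}$ is bounded above by the count over $\cA\times\cD_{j+1}$, so \eqref{eq:shiftedcounting-lowerbound} follows.

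Finally, as the sets $\cA_1,\dots,\cA_J$ are pairwise disjoint subsets of $\cA$ each of cardinality at least $\#\cA/(16L)$, we must have $J\leq 16L$; this simultaneously forces termination in finitely many steps and yields $J\ll L$. At termination, either $\cR^{(J)}$ is empty or its energy has already dropped below $(\#\cA)^3/L$, so condition (i) holds for $\cA_0$. The routine-but-essential maneuver is the rescaling via the optimal $L_j$; everything else is an iterative bookkeeping of energy and cardinalities.
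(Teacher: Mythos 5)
Your proof is correct and follows essentially the same route as the paper: both iterate Lemma~\ref{lem:RSS} on the shrinking residual set, stopping once the energy drops below $(\#\cA)^3/L$, and both rescale the parameter at each step so that the hypothesis of Lemma~\ref{lem:RSS} matches the running energy lower bound. Your choice $L_j=(\#\cR^{(j)})^3/E(\cR^{(j)})$ is slightly sharper than the paper's $L(\#\cR^{(j)}/\#\cA)^3$, but the two are equivalent for the purpose of collapsing all conclusions to bounds in $\#\cA$ and $L$, and your bookkeeping of disjointness, the inclusion $\cD_{j+1}\subseteq\cA-\cA$, and the termination bound $J\leqslant 16L$ is all in order.
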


\begin{proof}
Given an arbitrary $L\ge 1$, we assume
$$
E(\cA)\ge\frac{(\#\cA)^3}{L},
$$
since otherwise the proposition holds trivially with $J=0$ and $\cA_0=\cA$. By Lemma~\ref{lem:RSS}, there exist
$\cA_1\subseteq\cA$ and $\cD_1\subseteq \cA-\cA$ such that
$$
\#\cA_1\gg\frac{\#\cA}{L}
$$
and (ii) also holds for $j=1$.

We now write $\cB_1 =\cA\setminus\cA_1$, and assume that
$$
E(\cB_1 )\ge\frac{(\#\cA)^3}{L},
$$
since otherwise the proposition holds with $J=1$ and $\cA_0=\cB_1 $.
We now apply Lemma~\ref{lem:RSS} with
$$
\cA\leftarrow \cB_1 ,\quad L\leftarrow L\(\frac{\#\cB_1 }{\#\cA}\)^3,
$$
getting that there exist some $\cA_2\subseteq\cB_1 $ and $\cD_2\subseteq \cB_1 -\cB_1 $ satisfying
$$
\# \cA_2\gg \frac{\# \cB_1 }{L} \(\frac{\#\cA}{\#\cB_1 }\)^3\geqslant \frac{\# \cA}{L}
$$
and
$$
\# \cD\ll  L\(\frac{\#\cB_1 }{\#\cA}\)^3\# \cB_1 \leqslant L\#\cA,
$$
such that for each $a_2\in \cA_2$ we have
$$
\# \{ (a, d_2)\in \cB_1 \times  \cD_2:~a_2=a-d_2\}\gg \frac{\# \cB_1 }{L} \(\frac{\#\cA}{\#\cB_1 }\)^3\geqslant \frac{\# \cA}{L},
$$
which yields~\eqref{eq:shiftedcounting-lowerbound} trivially.
Further iterating, if $\cA_2\neq \cB_1 $, completes the proof.
\end{proof}

\subsection{Counting with products}

For a set $\cA\subseteq \F_p$, define
$$
D_p(\cA)=\#\{(a_1,\ldots, a_8)\in\cA^8:(a_1-a_2)(a_3-a_4) =(a_5-a_6)(a_7-a_8)\}.
$$
This quantity mixes the addition and multiplication in $\Fp$, and has been studied widely in additive combinatorics. Trivially we have
$$
D_p(\cA)\leqslant (\#\cA)^7.
$$
The best known bound up to now is due to Macourt, Petridis, Shkredov and Shparlinski~\cite[Theorem~4.3]{MPSS20}.

\begin{lemma}\label{lem:MPSS}
For any subset $\cA \subseteq \F_p$ of cardinality $A\leq p^{1/2}$,  we have
$$
D_p(\cA)\ll A^{84/13}p^{o(1)}.
$$
\end{lemma}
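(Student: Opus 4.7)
The plan is to recognize $D_p(\cA)$ as (essentially) the multiplicative energy of the representation function $r(b) = \#\{(a,a') \in \cA^2 : a-a' = b\}$, i.e.
\[
D_p(\cA) \;=\; \sum_{b_1b_2 = b_3b_4} r(b_1)\,r(b_2)\,r(b_3)\,r(b_4),
\]
and then to estimate this weighted multiplicative energy via an incidence bound in $\F_p^2$. First I would split off the contribution $b_1b_2=b_3b_4=0$: since $r(0) = A$ and $\sum_b r(b) = A^2$, the diagonal contribution is $O(A^6)$, which is safely below the target $A^{84/13}$. So it suffices to estimate the contribution where all $b_i$ are nonzero.

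For the off-diagonal part I would use a dyadic decomposition. Let $\cB_j = \{b \in \F_p^* : r(b) \sim 2^j\}$; then $\#\cB_j \cdot 2^{2j} \le E(\cA) \le A^3$, while $\#\cB_j \cdot 2^j \le A^2$ and $\#\cB_j \le A^2$. Fixing dyadic levels $2^{j_1},\dots,2^{j_4}$, the corresponding piece of $D_p(\cA)$ is bounded by
\[
2^{j_1+j_2+j_3+j_4}\,\cdot\,\#\{(b_1,b_2,b_3,b_4) \in \cB_{j_1}\times\cB_{j_2}\times\cB_{j_3}\times\cB_{j_4} : b_1b_2 = b_3b_4\}.
\]
The inner count is a point--line incidence count: viewing $(b_1,b_3)$ as a point of $\cP = \cB_{j_1}\times\cB_{j_3}$ and each pair $(b_2,b_4)$ as the line $L_{b_2,b_4}: b_2 X = b_4 Y$ through the origin, the equation $b_1b_2 = b_3b_4$ is exactly the incidence $(b_1,b_3)\in L_{b_2,b_4}$.

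At this point I would invoke the Stevens--de~Zeeuw point--line incidence bound in $\F_p^2$ (or equivalently a Rudnev point--plane bound applied to the lift in $\F_p^3$), which, for $|\cP|,|\cL| \le p^{4/3}$, gives
\[
I(\cP,\cL) \ll |\cP|^{11/15}|\cL|^{11/15} + |\cP| + |\cL|.
\]
The hypothesis $A \le p^{1/2}$ ensures $|\cB_{j_i}| \le A^2 \le p$, so the incidence bound is applicable throughout. Inserting this estimate into the dyadic sum, inserting the three available size constraints on $\#\cB_{j}$ ($\le A^2$, $\le A^3/2^{2j}$, and $\le A^2/2^j$), summing over the $O(\log^4 A)$ dyadic choices, and optimising the interpolation among these constraints should yield the bound $D_p(\cA) \ll A^{84/13} p^{o(1)}$.

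The main obstacle is the interpolation/optimisation in the last step: the exponent $84/13$ is not robust and arises only from a carefully balanced choice among the three available pointwise/global bounds on $\#\cB_j$ together with the $11/15$ exponent of Stevens--de~Zeeuw. A subtler issue is that lines through the origin can be concurrent, which inflates the incidence count; I would handle this either by pulling out the contribution from the origin by hand (it gives terms of size at most $A^6$, again harmless) or by replacing the point--line setup with the Rudnev point--plane bound, where concurrent planes can be controlled directly via the non-degeneracy hypothesis. Throughout, the $p^{o(1)}$ factor absorbs the dyadic logarithmic losses.
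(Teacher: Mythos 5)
First, a point of reference: the paper does not actually prove this lemma --- it is quoted directly from Macourt, Petridis, Shkredov and Shparlinski~\cite[Theorem~4.3]{MPSS20} --- so there is no internal proof to compare against, and your argument has to be judged on its own terms. Your opening reductions are fine: the identity $D_p(\cA)=\sum_{b_1b_2=b_3b_4}r(b_1)r(b_2)r(b_3)r(b_4)$, the $O(A^6)$ bound for the terms with $b_1b_2=b_3b_4=0$, and the three constraints $\#\cB_j\le A^2$, $2^j\#\cB_j\le A^2$, $2^{2j}\#\cB_j\le E(\cA)\le A^3$ are all correct.

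The core step, however, fails. The incidence model is degenerate: every line $L_{b_2,b_4}\colon b_2X=b_4Y$ passes through the origin, and $L_{b_2,b_4}$ depends only on the ratio $b_2/b_4$, so your line family is a weighted pencil of concurrent lines. For a pencil through a point not belonging to $\cP$ (here the origin is excluded since $b_1,b_3\ne 0$), each point of $\cP$ lies on exactly one line of the pencil, so the \emph{unweighted} incidence count is trivially at most $\#\cP$; what you actually need is the weighted count $\sum_{L}w(L)\,\#(\cP\cap L)$ with $w(L)=\#\{(b_2,b_4):\ b_2/b_4=\text{slope of }L\}$, and the Stevens--de~Zeeuw bound says nothing about this. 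The obstruction is not merely technical. Once you discard the fact that $\cB_j\subseteq\cA-\cA$ and retain only cardinalities, the count $\#\{(b_1,b_2,b_3,b_4)\in\cB_{j_1}\times\cdots\times\cB_{j_4}:\ b_1b_2=b_3b_4\}$ can genuinely be of order $B_1B_2B_3$ (take the $\cB_{j_i}$ to be geometric progressions), and substituting this worst case into your dyadic sum returns exactly the trivial bound: for instance with $j_1=\dots=j_4=j$ and $\#\cB_j=B$ one gets $2^{4j}B^3=(2^jB)^3\,2^j\le A^6\cdot A=A^7$. Any bound of the shape you propose would, if correct, contradict the geometric-progression example, which is precisely why the weighted/concurrent degeneracy cannot be waved away. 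So no estimate that sees the level sets only through their cardinalities can reach $A^{84/13}$; the additive structure of the differences $a_i-a_j$ must be kept in play, e.g.\ by working with the original eight variables via Rudnev's point--plane incidence theorem, or by exploiting that sets of popular differences of $\cA$ have constrained multiplicative energy through sum--product estimates. This is the route taken in~\cite{MPSS20}, and it is where the exponent $84/13$ actually comes from.
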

We recall, the following bound on the number of solutions of multiplicative
congruences, which follows from  a result of
Ayyad, Cochrane and Zheng~\cite[Theorem~1]{ACZ96}; see also Kerr~\cite{Ke17} for a stronger statement.

\begin{lemma}\label{lem:product-ACZKerr}
Suppose $\cA,\cB\subseteq\F_p^*$ are two intervals with $\#\cA=A$ and $\#\cB=B$. Then we have
$$
\mathop{\sum\sum\sum\sum}_{\substack{(a_1,a_2,b_1,b_2)\in\cA^2\times\cB^2\\ a_1b_1= a_2b_2}}1
\le \(\frac{AB}{p}+1\)ABp^{o(1)}.
$$
\end{lemma}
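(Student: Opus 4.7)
\bigskip
\noindent\textbf{Proof plan.}
The plan is to use a Cauchy--Schwarz reduction to the diagonal case $\cA = \cB$ followed by a lift of the multiplicative congruence to an integer equation, after which the classical divisor bound bears the main burden.

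First I would rewrite the congruence $a_1b_1 \equiv a_2 b_2 \bmod p$ as $a_1/a_2 \equiv b_2/b_1 \bmod p$ and regroup by the common ratio $c \in \F_p^*$, giving
$$
\mathop{\sum\sum\sum\sum}_{\substack{(a_1,a_2,b_1,b_2)\in\cA^2\times\cB^2\\ a_1b_1= a_2b_2}} 1 \;=\; \sum_{c\in\F_p^*} R_\cA(c)\, R_\cB(c^{-1}),
$$
where $R_\cX(c) = \#\{(x_1,x_2)\in\cX^2 : x_1 \equiv c x_2 \bmod p\}$. The Cauchy--Schwarz inequality then gives
$$
\sum_{c} R_\cA(c) R_\cB(c^{-1}) \;\leq\; \Bigl(\sum_c R_\cA(c)^2\Bigr)^{1/2}\Bigl(\sum_c R_\cB(c)^2\Bigr)^{1/2},
$$
so it is enough to prove the bound $\sum_c R_\cX(c)^2 \ll (X^2/p + 1) X^2 p^{o(1)}$ for a single interval $\cX\subseteq\F_p^*$ of length $X$; the identity $(A^2/p+1)(B^2/p+1) = (AB/p+1)^2 + (A-B)^2/p$ together with an elementary case analysis (depending on the sizes of $A, B$ relative to $p^{1/2}$) then recovers the stated joint bound.

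For a single interval $\cX$ of length $X$, I would interpret the diagonal count as $\sum_\lambda r(\lambda)^2$ with $r(\lambda) = \#\{(x_1,x_2)\in\cX^2 : x_1x_2\equiv\lambda\bmod p\}$, and lift the congruence $x_1x_2 \equiv x_3 x_4 \bmod p$ to an integer equation $x_1 x_2 - x_3 x_4 = kp$. The $k=0$ term is the usual divisor count $\#\{(x_1,x_2,x_3,x_4)\in\cX^4 : x_1x_2=x_3x_4\}\le X^2 p^{o(1)}$ via $\tau(n) \ll p^{o(1)}$. For $k\neq 0$, for each fixed $(x_3,x_4)$ the integer $x_3x_4 + kp$ is determined and its number of divisor pairs in $\cX$ is again bounded by $\tau(x_3x_4+kp) \ll p^{o(1)}$.

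The main obstacle will be controlling the effective range of $k$. Writing $\cX \subseteq [M_0+1, M_0+X]$, the products lie in an interval of length $O(M_0 X + X^2)$, so $k$ a priori ranges over up to $O(M_0 X/p + X^2/p + 1)$ values; for intervals placed near the middle of $\{1,\dots,p-1\}$ this is of size roughly $X$, and a pure divisor-sum argument would only give the insufficient bound $X^3 p^{o(1)}$. Closing this gap to the sharp exponent requires input beyond the divisor bound: one replaces the pointwise $\tau$-estimate on each fibre $k$ by a Fourier expansion and invokes a Weil-type cancellation estimate for the bilinear exponential sum $\sum_{a,b \in\cX}\e_p(tab)$, exactly as in the arguments of Ayyad--Cochrane--Zheng~\cite{ACZ96} and its refinement by Kerr~\cite{Ke17}. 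Invoking those results in the single-interval case, combining via the Cauchy--Schwarz reduction above, and reconciling the two forms of the bound through the identity noted earlier, completes the proof.
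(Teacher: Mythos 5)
There is a genuine gap: your Cauchy--Schwarz reduction to the single-interval (diagonal) case is lossy precisely when $A$ and $B$ are of very different sizes, and no subsequent identity or case analysis can repair it. After Cauchy--Schwarz you are left with the bound
$$
\sum_{c} R_\cA(c)R_\cB(c^{-1}) \le AB\left(\Bigl(\frac{A^2}{p}+1\Bigr)\Bigl(\frac{B^2}{p}+1\Bigr)\right)^{1/2}p^{o(1)},
$$
and for this to be $\le (AB/p+1)ABp^{o(1)}$ you would need $A^2/p+B^2/p \ll (A^2B^2/p^2 + AB/p + 1)p^{o(1)}$, which fails whenever $\min(A,B) \le p^{1/2-\delta}$ and $\max(A,B)\ge p^{1/2+\delta}$. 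Concretely, take $A = p^{0.9}$ and $B = p^{0.1}$: your route gives at best $AB\cdot A/\sqrt{p} = p^{1.4}$, whereas the lemma asserts $(AB/p+1)AB = p^{1+o(1)}$ (and indeed the true count is of that order). The identity $(A^2/p+1)(B^2/p+1) = (AB/p+1)^2 + (A-B)^2/p$ that you invoke only makes the problem visible: the extra term $(A-B)^2/p$ genuinely dominates in this regime, and since the loss occurs at the Cauchy--Schwarz step itself, "reconciling the two forms of the bound" afterwards is not possible. Your diagnosis of the elementary part is otherwise sound --- you are right that lifting to the integer equation $x_1x_2 - x_3x_4 = kp$ and applying the divisor bound fibrewise cannot reach the stated exponent, and that Weil-type cancellation is required.

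The fix is simply to drop the symmetrization: the theorem of Ayyad, Cochrane and Zheng that the paper cites (\cite[Theorem~1]{ACZ96}, refined in \cite{Ke17}) already bounds the number of solutions of $x_1x_2\equiv x_3x_4 \bmod p$ with the four variables ranging over four \emph{arbitrary} intervals of possibly different lengths $H_1,\dots,H_4$, giving $H_1H_2H_3H_4/p + O((H_1H_2H_3H_4)^{1/2}p^{o(1)})$. Applying this with $H_1=H_3=A$ and $H_2=H_4=B$ yields $(AB/p+1)ABp^{o(1)}$ directly, which is exactly how the paper proves the lemma (it is stated as an immediate consequence of that result, with no intermediate reduction).
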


We remark that using the result of Cochrane and Shi~\cite[Theorem~2]{CS10} one can extend Lemma~\ref{lem:product-ACZKerr}  to congruences with arbitrary moduli.

\begin{lemma}\label{lem:product-energy-ACZKerr}
Suppose $\cA,\cB\subseteq\F_p^*$ are two intervals with $\#\cA=A$ and $\#\cB=B$. For any subset $\cC\subseteq\F_p$, we have
$$
\mathop{\sum\sum\sum\sum}_{\substack{(a_1,a_2,b_1,b_2)\in\cA^2\times\cB^2\\ a_1b_1=a_2b_2}}E(a_1\cC,a_2\cC)
\le E(\cC) \(\frac{AB}{p}+1\)ABp^{o(1)}.
$$
\end{lemma}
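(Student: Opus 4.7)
The plan is to reduce the lemma to Lemma~\ref{lem:product-ACZKerr} by exploiting the dilation invariance of additive energy. The key observation is that since $\cA, \cB \subseteq \Fp^*$, the ratio $\mu = a_2/a_1$ is a well-defined nonzero element of $\Fp$, and a direct change of variables in
$$
E(a_1\cC, a_2\cC) = \#\{(c_1, c_2, c_3, c_4) \in \cC^4 : a_1(c_1 - c_2) = a_2(c_3 - c_4)\}
$$
shows that $E(a_1\cC, a_2\cC) = E(\cC, \mu\cC)$, depending only on the ratio $\mu$ and not on $a_1, a_2$ separately.

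Next, I would write out the standard expansion
$$
E(\cC, \mu\cC) = \sum_{t \in \Fp} r_{\cC - \cC}(t)\, r_{\mu\cC - \mu\cC}(t),
$$
where $r_{X - X}(t) = \#\{(x,x') \in X^2 : x - x' = t\}$ is the usual representation function of the difference set. Applying the Cauchy--Schwarz inequality together with the trivial dilation invariance $E(\mu\cC) = E(\cC)$ then yields the uniform bound
$$
E(a_1\cC, a_2\cC) = E(\cC, \mu\cC) \leqslant \sqrt{E(\cC)\, E(\mu\cC)} = E(\cC).
$$

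Finally, since this estimate is independent of $(a_1, a_2, b_1, b_2)$, I can pull $E(\cC)$ out of the sum to obtain
$$
\mathop{\sum\sum\sum\sum}_{\substack{(a_1,a_2,b_1,b_2)\in\cA^2\times\cB^2\\ a_1b_1=a_2b_2}} E(a_1\cC, a_2\cC) \leqslant E(\cC) \cdot \#\{(a_1,a_2,b_1,b_2) \in \cA^2 \times \cB^2 : a_1 b_1 = a_2 b_2\},
$$
and an application of Lemma~\ref{lem:product-ACZKerr} to the remaining cardinality finishes the argument. There is no substantive obstacle here: once one notices that the energy is invariant under dilation of the summands (so that the dependence on $a_1, a_2$ collapses entirely before invoking the multiplicative energy estimate), the proof is a one-line consequence of the previous lemma.
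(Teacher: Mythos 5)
Your proposal is correct and is essentially identical to the paper's own proof: both bound $E(a_1\cC,a_2\cC)\leqslant\sqrt{E(a_1\cC)E(a_2\cC)}=E(\cC)$ via Cauchy--Schwarz and dilation invariance, then pull this factor out and apply Lemma~\ref{lem:product-ACZKerr} to the remaining count. The intermediate reduction to the single ratio $\mu=a_2/a_1$ is harmless but not needed.
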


\begin{proof}
The Cauchy--Schwarz inequality implies
$$
E(a_1\cC,a_2\cC)^2
\leqslant E(a_1\cC) E(a_2\cC)=E(\cC)^2
$$
for any $a_1,a_2\in\F_p^*$. The lemma follows directly from Lemma~\ref{lem:product-ACZKerr}.
\end{proof}

We now derive a bound for hybrid counting problems as an alternative of Lemmas~\ref{lem:MPSS} and~\ref{lem:product-ACZKerr}.

\begin{lemma}\label{lem:MPSS-product-ACZKerr-mixed}
Suppose $\cA\subseteq\F_p^*$ is an interval with $\#\cA=A$, and $\cC\subseteq\F_p$ is an arbitrary subset with $\#\cC=C$. Denote by $N$ the number of solutions
$(a_1,a_2,c_1,c_2,c_3,c_4)\in\cA^2\times\cC^4$ to
$$
a_1(c_1-c_2)=a_2(c_3-c_4)\in\F_p^*.
$$
For $C\leqslant p^{1/2}$, we have
\begin{equation}\label{eq:N-asymptotic}
N =\frac{A^2C^4}{p}+O(AC^{42/13}p^{o(1)}).
\end{equation}
\end{lemma}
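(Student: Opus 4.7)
\noindent
The plan is to reparametrize the defining equation by the ratio $\lambda=a_2/a_1\in\F_p^*$, extract a main term by an averaging argument, and then estimate the residual by Cauchy--Schwarz combined with Lemmas~\ref{lem:product-ACZKerr} and~\ref{lem:MPSS}. First I would introduce $r(d)=\#\{(c,c')\in\cC^2:c-c'=d\}$ and, for each $\lambda\in\F_p^*$,
\[
R(\lambda)=\#\{(a_1,a_2)\in\cA^2:a_2=\lambda a_1\},\qquad \widetilde{E}(\lambda)=\sum_{d\in\F_p^*}r(d)r(\lambda d).
\]
Dividing the equation $a_1(c_1-c_2)=a_2(c_3-c_4)$ by $a_1\neq 0$ transforms the nonzero condition into $c_3\neq c_4$ and yields the decomposition $N=\sum_{\lambda\in\F_p^*}R(\lambda)\widetilde{E}(\lambda)$. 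A direct count gives $\sum_\lambda R(\lambda)=A^2$ and $\sum_\lambda \widetilde{E}(\lambda)=(C^2-C)^2$.

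Next, with $\overline{R}=A^2/(p-1)$ and $\overline{E}=(C^2-C)^2/(p-1)$, the zero-mean identities lead to
\[
N=\frac{A^2(C^2-C)^2}{p-1}+\sum_{\lambda\in\F_p^*}\bigl(R(\lambda)-\overline{R}\bigr)\bigl(\widetilde{E}(\lambda)-\overline{E}\bigr).
\]
The leading constant equals $A^2C^4/p+O(A^2C^3/p)$, and under the hypothesis $C\leq p^{1/2}$ the secondary term is absorbed into $O(AC^{42/13}p^{o(1)})$. Cauchy--Schwarz reduces the residual to
\[
\Bigl|\sum_\lambda(R-\overline{R})(\widetilde{E}-\overline{E})\Bigr|^2\leq\Bigl(\sum_\lambda R(\lambda)^2\Bigr)\Bigl(\sum_\lambda \widetilde{E}(\lambda)^2\Bigr).
\]
The first factor equals the multiplicative energy $\#\{(a_1,a_2,a'_1,a'_2)\in\cA^4:a_1a'_2=a_2a'_1\}$ of the interval $\cA$, bounded by $(A^4/p+A^2)p^{o(1)}$ via Lemma~\ref{lem:product-ACZKerr}. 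A short bijective argument identifies
\[
\sum_\lambda \widetilde{E}(\lambda)^2=\#\bigl\{(c_i)_{i=1}^8\in\cC^8:(c_1-c_2)(c_7-c_8)=(c_3-c_4)(c_5-c_6),\text{ all four differences nonzero}\bigr\},
\]
which is dominated by $D_p(\cC)\ll C^{84/13}p^{o(1)}$ thanks to Lemma~\ref{lem:MPSS} (using $C\leq p^{1/2}$). Combining the two factors yields the desired error $O(AC^{42/13}p^{o(1)})$ in the typical range $A\leq p^{1/2}$.

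The main obstacle will be the careful bookkeeping of degenerate contributions: one has to verify that $A^2(C^2-C)^2/(p-1)$ matches $A^2C^4/p$ up to an acceptable error, that the near-cancellation $\sum_\lambda(R-\overline{R})^2=\sum_\lambda R^2-A^4/(p-1)$ still leaves a bound of the right order after Lemma~\ref{lem:product-ACZKerr} is applied, and that contributions from configurations with $c_i=c_j$ in $\widetilde{E}(\lambda)^2$ do not inflate the estimate by more than $D_p(\cC)$. Each of these checks reduces to a routine polynomial comparison in $A$, $C$ and $p$ under the standing hypothesis $C\leq p^{1/2}$.
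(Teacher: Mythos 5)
Your decomposition is, up to a Fourier transform, identical to the paper's proof: writing $R(\lambda)=\frac{1}{p-1}\sum_{\chi}\bar\chi(\lambda)\bigl|\sum_{a\in\cA}\chi(a)\bigr|^2$ and applying Parseval shows that your quantity $\sum_{\lambda}(R(\lambda)-\overline{R})^2$ is exactly $\frac{1}{p-1}\sum_{\chi\ne\chi_0}\bigl|\sum_{a\in\cA}\chi(a)\bigr|^4$, and your $\sum_\lambda \widetilde E(\lambda)^2$ is the corresponding fourth moment of $\sum_{c_1,c_2}\chi(c_1-c_2)$; the paper runs the same Cauchy--Schwarz in character space. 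The treatment of the main term, the identification of $\sum_\lambda\widetilde E(\lambda)^2$ with the restricted count inside $D_p(\cC)$, and the appeal to Lemma~\ref{lem:MPSS} are all correct.

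The genuine gap is in how you bound the first Cauchy--Schwarz factor. You replace $\sum_\lambda(R-\overline{R})^2$ by $\sum_\lambda R^2$ and invoke Lemma~\ref{lem:product-ACZKerr}, obtaining $(A^4/p+A^2)p^{o(1)}$; this forces you to assume $A\le p^{1/2}$ to kill the term $A^4/p$, but the lemma carries no such hypothesis, and in the paper's application (the bound for $\Sigma^*$ in the proof of Theorem~\ref{thm:TypeII-incomplete-arbitraryset2}, where $\cA$ is the dilation interval of length $V$, which can be as large as $p^{1-1/r}$) the interval genuinely exceeds $p^{1/2}$. Nor does your fallback identity $\sum_\lambda(R-\overline{R})^2=\sum_\lambda R^2-A^4/(p-1)$ rescue this: Lemma~\ref{lem:product-ACZKerr} is only an upper bound with a $p^{o(1)}$ slack, so $\sum_\lambda R^2-A^4/(p-1)$ could still be as large as $A^4p^{o(1)-1}$, which is not $O(A^2p^{o(1)})$ for large $A$. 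What is actually needed is the statement that the \emph{nontrivial-character} fourth moment satisfies $\frac{1}{p-1}\sum_{\chi\ne\chi_0}\bigl|\sum_{a\in\cA}\chi(a)\bigr|^4\le A^2p^{o(1)}$ with the main term already removed; this is~\cite[Theorem~2]{ACZ96}, which is precisely the input the paper uses and is not derivable from the upper-bound form recorded in Lemma~\ref{lem:product-ACZKerr}. With that substitution your argument closes without any restriction on $A$.
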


\begin{proof}  Denote by $\cX$ be the set of all multiplicative characters in $\F_p^*$ and
by $\cX^*$ the set of all non-trivial ones. Orthogonality yields
$$
N=\frac{1}{p-1}\sum_{\chi\in\cX}\left|\sum_{a\in\cA} \chi(a)\right|^2 \left|\mathop{\sum\sum}_{c_1,c_2\in \cC} \chi(c_1-c_2)\right|^2.
$$
The contribution from the trivial character is
$$
\frac{A^2C^2(C-1)^2}{p-1}=\frac{A^2C^4}{p}+O\(\frac{A^2C^3}{p}\),
$$
which gives the main term in~\eqref{eq:N-asymptotic}. Denote by $N_1$ the remaining contribution. The Cauchy--Schwarz inequality implies
$$
N_1^2 \leqslant \frac{1}{p-1}\sum_{\chi\in\cX^*}\left| \sum_{a\in\cA} \chi(a)\right|^4 \times \frac{1}{p-1}\sum_{\chi\in\cX^*} \left|\mathop{\sum\sum}_{c_1,c_2\in \cC}
\chi(c_1-c_2)\right|^4.
$$
The estimate of Ayyad, Cochrane and Zheng~\cite[Theorem~2]{ACZ96} yields
$$
\frac{1}{p-1}\sum_{\chi\in\cX^*} \left|\sum_{a\in\cA} \chi(a)\right|^4\leqslant A^2p^{o(1)},
$$
from which we obtain
$$
N_1^2\leqslant D_p(\cC) A^2p^{o(1)}.
$$
Then the error term in~\eqref{eq:N-asymptotic} readily follows from Lemma~\ref{lem:MPSS}.
\end{proof}

\section{Bilinear forms with Kloosterman sums over arbitrary sets}
\subsection{Proof of Theorem~\ref{thm:TypeII-incomplete-arbitraryset1}}
Without loss of generality, we assume $\balpha,\bgamma$ are both bounded by $1$.
Define
$$
\delta=\frac{1}{M}+\frac{p^{1+1/r}}{MK^2},
$$
and note that we may assume $\delta<1$ since otherwise Theorem~\ref{thm:TypeII-incomplete-arbitraryset1} becomes trivial.

Let $L\ge 1$ be some parameter and apply Lemma~\ref{lem:RSS-Iter} to decompose $\cM$ as the union of disjoint subsets  $\cA_0,\cA_1,\dots,\cA_J\subseteq \cM$, which admit the same properties as in Lemma~\ref{lem:RSS-Iter}.
In this way, we may write
\begin{equation}
\label{eq:W-sumSj}
\WaMKp=\sum_{o\leqslant j\leqslant J}S_j,
\end{equation}
where
\begin{equation}
\label{eq:Sj}
S_j=\sum_{m\in \cA_j}\sum_{k\in\cK}\alpha_m\gamma_k\ep(amk^{-1}).
\end{equation}
We estimate $S_0$ and $S_j$ ($1\leqslant j\leqslant J$) by different methods.

The Cauchy--Schwarz inequality gives
\begin{equation}\label{eq:S0-Sigma}
|S_0|^2 \leqslant  K \Sigma,
\end{equation}
where
$$
\Sigma=\sum_{m_1, m_2 \in \cA_0}\left| \sum_{k\in\cK}\ep(a(m_1-m_2) k^{-1})\right|.
$$
Note that $\cK$ is an interval given by~\eqref{eq:Interval-K}. We introduce the ``shift by $ab$'' trick as in~\cite{FI93,FM98,KMS17}, which implies that for some $\xi\in\R$,  we have
%% and obtain
$$
\Sigma\ll \frac{\log p}{UV}\sum_{m_1, m_2 \in \cA_0}\sum_{k\in\cH}\sum_{v\sim V}
\left|  \sum_{u\sim U}\e(\xi u)  \ep(a(m_1-m_2)(k + uv)^{-1})\right|
$$
%%for some $\xi\in\R$, 
where $\cH$ is another
interval of length at most $2K$, and $U,V$ are to be optimized later subject to the constraint $UV\leqslant K$
(we recall the definition of  $\e(z) = \exp(2 \pi i z)$ in Section~\ref{sec:Not}).

To group variables, we put
\begin{equation}\label{eq:R(lambda,mu)}
R(\lambda,\mu)=\#\left\{(k,m_1,m_2,v):\begin{array}{c}
a(m_1-m_2)=v\lambda\in\F_p,\\
k=v\mu\in\F_p,\\
k\in\cH, m_1, m_2 \in \cA_0,v \sim V
\end{array}\right\}\end{equation}
for $\lambda,\mu\in\F_p$.
Now we can write
$$
\Sigma\ll\frac{\log p}{UV}\mathop{\sum\sum}_{\lambda,\mu\in\F_p}R(\lambda, \mu)\left|  \sum_{u\sim U}e(\xi u)
\ep(\lambda (\mu + u)^{-1})\right| .
 $$
By H{\"o}lder's inequality, we derive that
\begin{equation}\label{eq:Sigma-123}
\Sigma\ll\frac{\log p}{UV} \Sigma_1^{1-1/r} (\Sigma_2\Sigma_3)^{1/2r},
\end{equation}
where
\begin{align*}
\Sigma_1& =\mathop{\sum\sum}_{\lambda,\mu\in\F_p}R(\lambda, \mu),\\\Sigma_2&=\mathop{\sum\sum}_{\lambda,\mu\in\F_p}R(\lambda, \mu)^2,\\
\Sigma_3 &=\mathop{\sum\sum}_{\lambda,\mu\in\F_p}\left|\sum_{u\sim U}e(\xi u)
\ep(\lambda (\mu + u)^{-1})\right|^{2r}.
 \end{align*}

Clearly,
\begin{equation}\label{eq:Sigma1}
\Sigma_1\ll M^2KV,
\end{equation}
and $\Sigma_2$ counts all solutions to the system of congruences
\begin{align*}
(m_1-m_2)v_2&= (m_3-m_4)v_1\in\F_p,\\
k_1v_2&= k_2v_1\in\F_p
\end{align*}
in $k_1,k_2 \in \cH$, $m_1, m_2, m_3,m_4 \in \cA_0$ and $v_1,v_2 \sim V$. In the language of additive energies, we also have
$$
\Sigma_2=\mathop{\sum\sum\sum\sum}_{\substack{k_1,k_2\in\cH,~v_1,v_2\sim V\\ k_1v_2\equiv k_2v_1\bmod p}}E(v_1\cA_0,v_2\cA_0).
$$
From Lemma~\ref{lem:product-energy-ACZKerr} and the prescribed bound
$$
E(\cA_0)\ll\frac{M^3}{L}
$$
as in Lemma~\ref{lem:RSS-Iter}, we infer
\begin{equation}\label{eq:Sigma2}
\Sigma_2\le \frac{M^3KV}{L}\(\frac{KV}{p}+1\)p^{o(1)}.
\end{equation}

Appealing to the orthogonality of additive characters, we find
$$\Sigma_3\le p\widetilde \Sigma_3,
$$
where
$\widetilde \Sigma_3$ counts all solutions to the equation
\begin{equation}
\label{eq:muui}
\sum_{1\leqslant i\leqslant 2r} (-1)^i (\mu + u_i)^{-1}=0
\end{equation}
in $u_1,\ldots, u_{2r}\sim U$ and $\mu\in\F_p$. If the  set $\{u_1, \ldots, u_{2r}\}$ can be partitioned into $r$ pairs of equal elements  $u_i = u_j$,  then there are $p$ possible values for $\mu$ and thus the total contribution from such solutions is $O(U^{r}p)$. For other choices of $u_1,\ldots,u_{2r}$
there are obviously at most $2r-1$ possible values of $\mu$ and thus to total contribution from such solutions is $O(U^{2r})$.
Therefore, we find
\begin{equation}\label{eq:Sigma3}
\Sigma_3\leqslant p\widetilde \Sigma_3\ll p(U^{r}p + U^{2r}).
\end{equation}

Inserting the bounds~\eqref{eq:Sigma1}, \eqref{eq:Sigma2} and~\eqref{eq:Sigma3} to~\eqref{eq:Sigma-123}, we obtain
$$
\Sigma^{2r}
 \le\frac{(M^2KV)^{2r-2}}{(UV)^{2r}}\frac{M^3KV}{L}\(\frac{KV}{p}+1\)(U^{r}p + U^{2r})p^{1+o(1)}.
$$
Taking
\begin{equation}\label{eq:U,V-choice}
U=p^{1/r},\quad V=K/U,
\end{equation}
we find $V\geqslant1$ since we assume $K\geqslant p^{1/r}$,
and the above bound can be simplified to
$$
\Sigma^{2r}\le \frac{M^{4r-1}K^{2r-2}}{L}(K^2+p^{1+1/r})p^{o(1)},
$$
which after the substitution in~\eqref{eq:S0-Sigma} implies that
\begin{equation}\label{eq:S0-upperbound}
S_0\ll MKL^{-1/4r}p^{o(1)}\delta^{1/4r}  .
\end{equation}

We now turn to consider $S_j$ for $1 \leqslant j\leqslant J$. Recalling~\eqref{eq:Sj}, by the Cauchy--Schwarz inequality
\begin{equation}
\label{eq:Sj-Pij}
|S_j|^2\leqslant\# \cA_j  \Pi_j,
\end{equation}
where
$$
\Pi_j=\sum_{m\in \cA_j} \left|\sum_{k\in\cK}\gamma_k\ep(amk^{-1})\right|^2.
$$
In view of Lemma~\ref{lem:RSS-Iter}~(ii), we obtain
$$
\Pi_j\ll \frac{L}{M}\sum_{m\in\cM}\sum_{d\in \cD_j}\left|\sum_{k\in\cK}\gamma_k\ep(a(m-d)k^{-1})\right|^2.
$$
Squaring out and switching summations, it follows that
$$
\Pi_j \ll \frac{L}{M}\mathop{\sum\sum}_{k_1,k_2\in\cK}
\left| \sum_{m\in\cM}\ep(am(k_1^{-1}-k^{-1}_2))\right| \left| \sum_{d\in \cD_j}\ep(ad(k_1^{-1}-k^{-1}_2)) \right| .
$$
Again by the Cauchy--Schwarz inequality, we infer
\begin{equation}\label{eq:Pij-Pij1Pij2}
\Pi_j \ll \frac{L}{M} (\Pi_{j,1} \Pi_{j,2})^{1/2}
\end{equation}
with
\begin{align*}
\Pi_{j,1}&=\mathop{\sum\sum}_{k_1,k_2\in\cK}
\left|\sum_{m\in\cM}\ep(am(k_1^{-1}-k^{-1}_2))\right|^2,\\
\Pi_{j,2}&=\mathop{\sum\sum}_{k_1,k_2\in\cK}
\left|\sum_{d\in \cD_j}\ep(ad(k_1^{-1}-k^{-1}_2))\right|^2.
\end{align*}

The treatments to $\Pi_{j,1}$ and $\Pi_{j,2}$ are quite similar, and we only give the details for the former one. In fact,
$$
\Pi_{j,1}=\mathop{\sum\sum}_{m_1,m_2\in\cM}
\left|\sum_{k\in\cK}\ep(a(m_1-m_2)k^{-1})\right|^2.
$$
Following the above arguments, the ``shift by $ab$'' trick in~\cite{FI93,FM98,KMS17} yields
$$
\Pi_{j,1}\ll\frac{K\log p}{U^2V}\mathop{\sum\sum}_{m_1,m_2\in\cM}
\sum_{k\in\cH}\sum_{v\sim V}\left|\sum_{u\sim U}e(\xi u)\ep(a(m_1-m_2)(k+uv)^{-1})\right|^2
$$
for some $\xi\in\R$ and $U,V\geqslant$ with $UV\leqslant K$, where $\cH$ is an
interval of length at most $2K$. Therefore,
$$
\Pi_{j,1}\ll\frac{K\log p}{U^2V}\mathop{\sum\sum}_{\lambda,\mu\in\F_p}
T(\lambda,\mu)\left|\sum_{u\sim U}e(\xi u)\ep(\lambda (\mu+u)^{-1})\right|^2,
$$
where
$T(\lambda, \mu)$ counts all solutions to the system of equations
$$
a(m_1-m_2) = v\lambda\in\F_p,\qquad  k= v\mu\in\F_p
$$
in $k\in\cH$, $m_1,m_2\in\cM$ and $v\sim V$. Following the previous arguments regarding $S_0$, we would like to apply H\"older's inequality, for which the first and second moments of $T(\lambda, \mu)$ need to be under control. In fact,
$$
\mathop{\sum\sum}_{\lambda,\mu\in\F_p}T(\lambda,\mu)\ll M^2KV
$$
and
\begin{align*}
\mathop{\sum\sum}_{\lambda,\mu\in\F_p}T(\lambda,\mu)^2
&\le E(\cM)KV(KV/p + 1)p^{o(1)}\\
&\le M^3KV(KV/p + 1)p^{o(1)}
\end{align*}
by Lemma~\ref{lem:product-energy-ACZKerr} and the trivial bound $E(\cM)\leqslant M^3$.
Therefore, H\"older's inequality yields
\begin{align*}
(\Pi_{j,1})^r
\le\frac{(MK)^{2r-1}}{U^{2r}V} &\(\frac{KV}{p}+1\)p^{o(1)}\\
& \times   \mathop{\sum\sum}_{\lambda,\mu\in\F_p} \left| \sum_{u\sim U}e(\xi u)\ep(\lambda (\mu+u)^{-1})\right|^{2r}.
\end{align*}
As argued above, the last sums over $\lambda,\mu,u$ contribute at most $p(U^{r}p+U^{2r})$, so that
\begin{equation}\label{eq:Pij1-upperbound}
(\Pi_{j,1})^r
\le (MK)^{2r-1}\(K+\frac{p^{1+1/r}}{K}\)p^{o(1)}
\end{equation}
upon the choice of $U$ and $V$ as in~\eqref{eq:U,V-choice}.
%%commB choices of -> choice of
A similar argument shows
\begin{equation}\label{eq:Pij2-upperbound}
(\Pi_{j,2})^r \le (LMK)^{2r-1}\(K+\frac{p^{1+1/r}}{K}\)p^{o(1)},
\end{equation}
for which we use $E(\cD)\leqslant (LM)^3$ in view of Lemma~\ref{lem:RSS-Iter}~(ii).

Combining~\eqref{eq:Pij1-upperbound}, \eqref{eq:Pij2-upperbound} and~\eqref{eq:Pij-Pij1Pij2}, we obtain
$$
\Pi_j \le MK^2L^{2-1/2r}p^{o(1)} \delta^{1/r}.
$$
Summing over $j$, we derive from~\eqref{eq:Sj-Pij} that
$$
\sum_{1\leqslant j\leqslant J}|S_j|^2\leqslant M^2K^2L^{2-1/2r}p^{o(1)} \delta^{1/r}.
$$
Note that $J\ll L$ as in Lemma~\ref{lem:RSS-Iter}, hence Cauchy--Schwarz yields
\begin{equation}\label{eq:sumSj-upperbound}
\sum_{1\leqslant j\leqslant J}|S_j|\leqslant MKL^{3/2-1/4r}p^{o(1)} \delta^{1/2r}.
\end{equation}

Inserting~\eqref{eq:S0-upperbound} and~\eqref{eq:sumSj-upperbound} into~\eqref{eq:W-sumSj}, we obtain
$$
\WaMKp\ll MKp^{o(1)}(L^{-1/4r} \delta^{1/4r}+L^{3/2-1/4r} \delta^{1/2r}).
$$
Now Theorem~\ref{thm:TypeII-incomplete-arbitraryset1} follows immediately by choosing $L=\delta^{-1/6r}$.

\subsection{Proof of Theorem~\ref{thm:TypeII-incomplete-arbitraryset2}}
We also assume $\balpha,\bgamma$ are both bounded and proceed as in the proof of Theorem~\ref{thm:TypeII-incomplete-arbitraryset1}, however we do not decompose the set $\cM$.
Similarly,  for any $U,V\geqslant1$ with $UV\leqslant K/2$, we have
\begin{equation}\label{eq:W-Xi}
|\WaMKp|^2\leqslant K \Xi,
\end{equation}
where
$$
\Xi \ll\frac{\log p}{UV}\mathop{\sum\sum}_{\lambda,\mu\in\F_p} R(\lambda,\mu) \left| \sum_{u\sim U}e(\xi u) \ep(\lambda (\mu + u)^{-1})\right|
$$
for some $\xi\in\R$,
where $R(\lambda, \mu)$ is defined as in~\eqref{eq:R(lambda,mu)} with $\cA_0$ replaced by $\cM$.

As argued above, we also need to bound the first and second moments of $R(\lambda,\mu)$ when applying H\"older's inequality.
In particular, we would like to bound the second moment by virtue of Lemma~\ref{lem:MPSS-product-ACZKerr-mixed} in place of Lemma~\ref{lem:product-energy-ACZKerr}. To do so, we pick out the contribution from the terms with $\lambda=0$, so that
\begin{equation}\label{eq:Xi-Xi1}
\Xi \ll \Xi_1+MKp^{o(1)}
\end{equation}
with
$$
\Xi_1=\frac{\log p}{UV}\sum_{\lambda\in\F_p^*}\sum_{\mu\in\F_p} R(\lambda,\mu) \left| \sum_{u\sim U}e(\xi u) \ep(\lambda (\mu + u)^{-1})\right|.
$$
Now H\"older's inequality yields
\begin{equation}\label{eq:Xi1-Sigma*}
\Xi_1^{2r}\le \frac{(M^2KV)^{2r-2}}{(UV)^{2r}}(pU^r+U^{2r})p^{1+o(1)} \Sigma^*,
\end{equation}
where $\Sigma^*$ counts all solutions to the system of equations
\begin{align*}
(m_1-m_2)v_2&= (m_3-m_4)v_1\in\F_p^*,\\
k_1v_2&= k_2v_1\in\F_p
\end{align*}
in $k_1,k_2\in\cH$ with $\#\cH\leqslant 2K$, $m_1,m_2,m_3,m_4 \in \cM$, $v_1,v_2 \sim V$
with  $m_1\neq m_2$ and $m_3 \neq m_4$.
From Lemma~\ref{lem:MPSS-product-ACZKerr-mixed} it follows that
$$
\Sigma^*\le K\(\frac{M^4V^2}{p}+M^{42/13}V\)p^{o(1)}.
$$
After a substitution in~\eqref{eq:Xi1-Sigma*} we infer
$$
\Xi_1^{2r}
\le M^{4r}K^{2r}\(\frac{1}{K}+\frac{p^{1+1/r}}{M^{10/13}K^2}\)p^{o(1)},
$$
and combined with~\eqref{eq:Xi-Xi1} we obtain
%%from which and -> and combined with
$$
\Xi \le M^2K\(\frac{1}{K}+\frac{p^{1+1/r}}{M^{10/13}K^2}\)^{1/2r}p^{o(1)}+MKp^{o(1)}
$$
Inserting this into~\eqref{eq:W-Xi}, Theorem~\ref{thm:TypeII-incomplete-arbitraryset2} follows readily.

\subsection{Proofs of Theorems~\ref{thm:TypeI-complete-arbitraryset1} and~\ref{thm:TypeI-complete-arbitraryset2}}
We may argue as in the proof of Theorem~\ref{thm:TypeI-complete-q}, and it suffices to bound the Type~II sum
$$
Y=\sum_{m\in\cM} \sum_{x \in \cX_{i,\pm}} \alpha_m\gamma_x\ep(m\xbar)
$$
for $0\leqslant i\leqslant \rf{\log (N/2)}$, where $\cX_{i,\pm}$ is defined by~\eqref{eq:Xi,pm} with $q$ replaced by $p$.
Note that $N\leqslant p^{1-1/r}$ guarantees $e^{i} p /N \geqslant p^{1/r}$ for each $i$, and we conclude from Theorem~\ref{thm:TypeII-incomplete-arbitraryset1} that
$$
Y\ll \|\balpha\|_\infty   Mp^{1+o(1)}\(\frac{1}{M}+\frac{N^2}{ M p^{1-1/r}}\)^{7/24r}.
$$
This proves Theorem~\ref{thm:TypeI-complete-arbitraryset1}, and the proof of Theorem~\ref{thm:TypeI-complete-arbitraryset2} can also be completed if employing Theorem~\ref{thm:TypeII-incomplete-arbitraryset2} instead of Theorem~\ref{thm:TypeII-incomplete-arbitraryset1}.

\section{Proofs of applications}
\subsection{Proof of Theorem~\ref{thm:moments}}

It suffices to prove~\eqref{eq:alpha=12r/7}, and the general case follows from H\"older's inequality. To this end,
we put
$$
\fm(\lambda)=\sum_{n\in \cJ}\cK_p(\lambda,n)
$$
and consider the $2\alpha$-th moment
$$
\fM_{\alpha}=\sum_{\lambda \in \F_p^*}|\fm(\lambda)|^{2\alpha}.
$$
H\"older's inequality yields
$$
\fM_\alpha
\leqslant \fM_1^{\alpha_1}(\fM_{12r/7})^{7\alpha_2/12r}
$$
with $\alpha_1+\alpha_2=\alpha$,  $\alpha_1+7\alpha_2/12r=1$ and $\alpha_1,\alpha_2\geqslant0$.
Equivalently, we have
$$
\alpha_1=\frac{12r-7\alpha}{12r-7}\geqslant0,\mand \alpha_2=\frac{12r(\alpha-1)}{12r-7}\geqslant0
$$
since $1\leqslant \alpha\leqslant 12r/7$.
Note that the orthogonality of additive characters gives
$$
\fM_1\leqslant\sum_{\lambda \in \F_p}|\fm(\lambda)|^2\ll p^2N.
$$
Hence
$$
\fM_\alpha
 \leqslant(p^2N)^{(12r-7\alpha)/(12r-7)}(\fM_{12r/7})^{7(\alpha-1)/(12r-7)},
$$
and the general case then follows from~\eqref{eq:alpha=12r/7} readily.

We now turn to prove~\eqref{eq:alpha=12r/7}. Performing a dyadic decomposition and pigeonhole principle, there exist some $V>0$ and a subset $\Lambda\subseteq \F_p^*$ given by
$$
\Lambda=\{\lambda\in \F_{p}^*:~V\leqslant|\fm(\lambda)|<2V\},
$$
such that
\begin{equation}
\label{eq:moments1}
\fM_{12r/7}\ll p^{o(1)}V^{24r/7}\#\Lambda.
\end{equation}
On the other hand,
$$
\sum_{\lambda \in \Lambda}|\fm(\lambda)|\geqslant  V\#\Lambda,
$$
from which and Theorem~\ref{thm:TypeI-complete-arbitraryset1} it follows that
$$
V(\#\Lambda)^{7/24r}\ll p^{1+o(1)}\left(1+\frac{N^{7/12r}}{p^{7(r-1)/24r^2}} \right).
$$
Alternatively, we have
$$
V^{24r/7}\#\Lambda\ll p^{24r/7+o(1)}\left(1+\frac{N^2}{p^{1-1/r}}\right),
$$
from which and~\eqref{eq:moments1} we obtain~\eqref{eq:alpha=12r/7}, completing the proof of Theorem~\ref{thm:moments}.

\subsection{Proof of Theorem~\ref{thm:EAN}}

We essentially follow the proof of~\cite[Theorem~1.2]{KeS20} but apply Theorem~\ref{thm:TypeI-complete-q}   instead of~\eqref{eq:TypeI-complete-Shparlinski} as used in~\cite{KeS20}.

We now sketch some details.
First we fix some sufficiently small $\varepsilon > 0$ and for each positive divisor $d\mid q$ we define
$$
U(d) = d^2 X^{-1}  \mand  V(d) = d^2 XY^{-2}(qX)^{2\varepsilon},
$$
where $Y\in[1,X/2]$ is to be chosen later.
In particular, we have
$$
U(d) \le V(d).
$$

Then by~\cite[Equation~(2.9)]{KeS20}
 \begin{equation}
\label{eq: EAB Eab}
 |\EAR| \le \EaR+ O\(A(Y/q + 1)(Yq)^{\eps}\),
 \end{equation}
 where
\begin{equation}
\label{eq: EaR}
\EaR =  \frac{1}{q} \sum_{d \mid q} \sum_\pm  \left|  \sum_{a \in \cA}
\sum_{1\leqslant n\leqslant V(d)}  w_d(n) \cK_d(\pm n, a)\right|
 \end{equation}
with some (explicit) weights $w_d(n)$ satisfying
$$
|w_d(n) | \le
 \begin{cases} X^{1+\eps+o(1)} d^{-1}, & \text{if\ } n \le U(d), \\
 X^{1/4+o(1)}  d^{1/2} n^{-3/4}, & \text{if\ }  U(d) < n \le V(d).
 \end{cases}
$$
Next, for each $d \mid q$ we define the  integer $\ell(d)$   by the conditions
$$
 2^{\ell(d)-1} U(d)  \le V(d) <  U(d) 2^{\ell(d)},
$$
and set
$$
V_i(d) = \min\left\{ 2^i  U(d) , V(d) \right\}, \quad i = 0, \ldots, \ell(d).
$$
Hence, we derive from~\eqref{eq: EaR} that
\begin{equation}
\label{eq: E1E2}
\EaR \le \frac{1}{q} \sum_{d \mid q} \sum_\pm\(|E_1^{\pm}(d) |+ \sum_{i=0}^{\ell(d)-1}|E_{2,i}^{\pm}(d)|\),
 \end{equation}
where
\begin{align*}
E_1^{\pm}(d)&=\sum_{a \in \cA}
\sum_{n \le  U(d)}  w_d(n) \cK_d(\pm n, a) \\
E_{2,i}^{\pm}(d)&=\sum_{a \in \cA}
\sum_{V_i(d) \le n < V_{i+1}(d) }  w_d(n)  \cK_d(\pm n, a),
\end{align*}
see also~\cite[Equation~(3.11)]{KeS20}.

In order to apply Theorem~\ref{thm:TypeI-complete-q}, we have to remove the coprime condition, since $\cA=(B,B+A]\cap \Z_q^*$ is not exactly an interval. To do so, we appeal to the M\"obius inversion, getting
\begin{align*}
E_1^{\pm}(d)&=\sum_{s\mid q}\mu(s)\sum_{\substack{B<a\leqslant B+A\\ s\mid a}}
\sum_{n \le  U(d)}  w_d(n) \cK_d(\pm n, a)\\
&=\sum_{s\mid q}\mu(s)\sum_{B/s<a\leqslant (B+A)/s}
\sum_{n \le  U(d)}  w_d(n) \cK_d(\pm n, sa).
\end{align*}
For $s>A$, there is at most one element in the $a$-sum, which contributes to $E_1^{\pm}(d)$
at most $XU(d)d^{-1/2+o(1)}\le d^{3/2}q^{o(1)}$. For $s\leqslant A$, the above transformations allow us to apply Theorem~\ref{thm:TypeI-complete-q} directly. We then arrive at
\begin{equation} \label{eq: E1 bound}
\begin{split}
E_1^{\pm}(d)
&\ll X^{\varepsilon}Ad^{3/2}\sum_{s\mid q}s^{-1}\Delta_1(d^2/X,A/s,d,\gcd(s,d))+d^{3/2}q^{o(1)}\\
&\ll (qX)^{\varepsilon}Ad^{3/2}\sum_{s|d}\frac{1}{s}\Delta_1(d^2/X,A/s,d,s)+d^{3/2}q^{o(1)} \\
&\ll (qX)^{\varepsilon}Ad^{3/2}\Delta_1(d^2/X,A,d,1)+d^{3/2}q^{o(1)}.
\end{split}
\end{equation}

Similar arguments also work for $E_{2,i}^{\pm}(d)$. In fact, we may obtain
\begin{equation} \label{eq: E2 mess}
\begin{split}
E_{2,i}^{\pm}(d)
&\ll X^{1/4}d^{1/2}V_i(d)^{-3/4}\\
&\qquad \qquad \(AV_i(d)d^{1/2}\Delta_1(V_i(d),A,d,1)+ V_i(d)d^{1/2+o(1)}\)\\
&\ll X^{1/4}dV_i(d)^{1/4}\(A\Delta_1(V_i(d),A,d,1)+1\)q^{o(1)}\\
&\ll AX^{1/4}dV_i(d)^{1/4}\Delta_1(V_i(d),A,d,1)+X^{1/2+\varepsilon}Y^{-1/2}d^{3/2}.
\end{split}
\end{equation}
Note that
\begin{align*}
V_i(d)^{1/4} \Delta_1(V_i(d) ,A,d,1) \le (2^i U(d))^{1/4} & \Delta_1(2^i U(d) ,A,d,1)\\
&\qquad + V(d)^{1/4} A^{-1/2},
\end{align*}
from which it follows that
\begin{equation} \label{eq: E2 fin}
\begin{split}
E_{2,i}^{\pm}(d)
\ll (qX)^\varepsilon\bigl( (2^i)^{1/4} Ad^{3/2}& \Delta_1(2^i d^2/X,A,d,1)\\
& \qquad \qquad \quad +(AX/Y)^{1/2}d^{3/2}\bigr)
\end{split}
\end{equation}
(in particular, we can drop the term $X^{1/2+\varepsilon}Y^{-1/2}d^{3/2}$ in~\eqref{eq: E2 mess}).

Clearly the bound~\eqref{eq: E2 fin} on  $E_{2,i}^{\pm}(d)$
dominates that of~\eqref{eq: E1 bound} on $E_1^{\pm}(d)$, which after the
substitution in~\eqref{eq: E1E2} and using the well-known bound on the divisor function
 (see, for example~\cite[Equation~(1.81)]{IK04}), yields
\begin{equation} \label{eq: E* bound}
\EaR  \ll  (qX)^{\varepsilon} \((qX)^{1/2}\Delta(X,A,q)+(qAX/Y)^{1/2}\)
\end{equation}
where
$$
\Delta(X,A,q)=\min\left\{X^{-1/4}+q^{-1/2},~A^{1/4}q^{-1/2}+Aq^{-1},~q^{-1/2}+Aq^{-3/4}\right\}.
$$
We note that $\Delta(X,A,q)$ is derived from $\Delta_1(q^2/X,A,q,1)$ where we drop the last term
$A^{-1/2}$ (which is already incorporated in~\eqref{eq: E2 fin} and thus in~\eqref{eq: E* bound})
and then we pull out the factor $X^{1/2} A^{-1}$.

Therefore, we now  infer from~\eqref{eq: EAB Eab} and~\eqref{eq: E1E2} that
$$
\EAR\ll  (qX)^{\varepsilon}\((qX)^{1/2}\Delta(X,A,q)+(qAX/Y)^{1/2}+A+AY/q\).
$$
Taking $Y =q(X/A)^{1/3}$ to balance the last two terms, we find
$$
\EAR\ll  (qX)^{\varepsilon}\((qX)^{1/2}\Delta(X,A,q)+A^{2/3}X^{1/3}\).
$$
Note that the above choice of $Y$ satisfies $Y\leqslant X/2$ since we assume $q^3<AX^2/8$.
This completes the proof of Theorem~\ref{thm:EAN}.

\section{Comments}
%\commI{I keep this comment for now}
%\commX{I revised this part a lot: some discussed are reworded; some arguments with percent signs have been removed; the discussion on $\RIJq$ with extra smooth weight functions are removed since the arguments are standards and should be familiar to readers}

Our results can be used in the same problems as the results of previous works~\cite{BFKMM17a, BFKMM17b, FKM14, KMS17, KMS20, Sh19, SZ16}. For example, using Theorem~\ref{thm:TypeI-complete-q} one can improve some results of~\cite{KeS20} on average values of the divisor function over some families of short arithmetic progressions.
% Perhaps they can also be used to improve the result of Blomer, Fouvry, Kowalski, Michel
% and  Mili{\'c}evi{\'c}~\cite[Theorem~1.8]{BFKMM17b} on sums of Kloosterman sums
% over prime values of parameters.

Theorems~\ref{thm:TypeII-incomplete-arbitraryset1} and~\ref{thm:TypeII-incomplete-arbitraryset2} can be further improved as long as $K$ is not too large. In fact, in the corresponding proofs, the variable $\mu$ is supported
on a set of cardinality at most $KV$. Therefore, the bound~\eqref{eq:Sigma3} can be improved as
$$
\Sigma_3\leqslant p\widetilde \Sigma_3 \ll p( K U^{r}V + U^{2r})
$$
for $KV\le p$.
This leads to the optimal choice
$$U = K^{2/(r+2)} \mand V = K^{(r-1)/(r+1)}.
$$
Hence, under the condition
$$
K \le p^{(r+1)/2r},
$$
instead of those in Theorems~\ref{thm:TypeII-incomplete-arbitraryset1} and~\ref{thm:TypeII-incomplete-arbitraryset2}  we now obtain better bounds (assuming $\|\balpha\|_\infty,\|\bgamma\|_\infty\leqslant1$)
$$
\WaMKp \ll  KMp^{o(1)} \(\frac{p}{K^{2r/(r+1)}  M} \)^{1/4r}
%\(\frac{1}{M}  + \frac{p}{K^{2r/(r+1)}  M} \)^{1/4r}
$$
and
$$
\WaMKp\ll    KMp^{o(1)}
 \(\frac{1}{M^{2r}}+ \frac{1}{K}    + \frac{p}{K^{2r/(r+1)}  M^{10/13}} \)^{1/4r},
$$
respectively.    In turn, this leads to corresponding modifications of Theorems~\ref{thm:TypeI-complete-arbitraryset1} and~\ref{thm:TypeI-complete-arbitraryset2}.

As in~\cite{Sh19}, we can also apply our results to the double sums
$$
\RIJq = \sum_{m\in \cI} \sum_{n \in \cJ}\cR_q(m,n,1),
$$
where $\cR_q(m,n,1)$ is the double Kloosterman sum given by
$$
\cR_q(m,n,\ell )=\mathop{\sum\sum}_{x,y\in\Z_q^*} \eq(mx+ny+\ell \xbar\ybar).
$$

The elementary arguments turn out to be very powerful in the studies on bilinear forms with Kloosterman sums, and the proof relies heavily on the exact shapes of such sums. The methods from $\ell$-adic cohomology employed in~\cite{FKM14,FKM15,KMS17,KMS20} are quite deep and applicable to a large family of functions rather than Kloosterman sums only. It should be very meaningful and exhilarating to investigate if the above two approaches can be combined, and  lead to stronger results than either of them separately.

\appendix

\section{Evaluations of Kloosterman sums}\label{sec:Kloostermanappendix}
\subsection{Quadratic Gauss sums}
Before presenting results related to the evaluation of Kloosterman sums, we require some facts about quadratic Gauss sums
$$
G(m,n;q)
=\sum_{a\in\Z_q}\eq(ma^2+na).
$$
The following statements are well-known (see, for example,~\cite[\S~6]{Es62}).
\begin{lemma}\label{lem:Gausssum} Suppose $d=\gcd(m,q)$.
\begin{enumerate}[$(i)$]
\item If $d=1$, then $|G(m,n;q)|\leqslant2\sqrt{q}$.
\item $G(m,n;q)$ vanishes unless $d\mid n$, in which case we have
$$
G(m,n;q)=d G\(\frac{m}{d},\frac{n}{d};\frac{q}{d}\).
$$
\end{enumerate}
\end{lemma}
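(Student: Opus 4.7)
The plan is to treat both parts by elementary manipulation of exponentials: part (ii) reduces to a factorisation after a change of variables, and part (i) follows from computing $|G|^2$ directly.

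For part (ii), I would write $q = dq'$ and $m = dm'$ with $\gcd(m', q') = 1$, and parametrise $a \in \Z_q$ by $a = b + q'c$ with $b \in \{0, \ldots, q'-1\}$ and $c \in \{0, \ldots, d-1\}$. Expanding $ma^2 + na$ modulo $q$, the cross term $2mbq'c$ disappears because $2mq'/q = 2m'$ is an integer, and the term $mq'^2c^2$ disappears because $mq'^2/q = m'q'$ is an integer. Only the $nq'c$ contribution survives in the $c$-variable, producing $\e(nc/d)$. This gives the factorisation
$$
G(m,n;q) = \left(\sum_{b=0}^{q'-1}\eq(mb^2+nb)\right)\left(\sum_{c=0}^{d-1}\e(nc/d)\right).
$$
The second factor equals $d$ if $d \mid n$ and $0$ otherwise, proving the vanishing statement. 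When $d \mid n$, setting $n' = n/d$ and observing $\eq(mb^2+nb) = \mathbf{e}_{q'}(m'b^2+n'b)$ identifies the first factor with $G(m/d, n/d; q/d)$, which yields the claimed identity.

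For part (i), I would compute the squared modulus
$$
|G(m,n;q)|^2 = \sum_{a,b \in \Z_q}\eq\bigl(m(a^2-b^2) + n(a-b)\bigr)
$$
and substitute $a = b + c$ to separate the variables:
$$
|G(m,n;q)|^2 = \sum_{c \in \Z_q}\eq(mc^2 + nc)\sum_{b \in \Z_q}\eq(2mbc).
$$
The inner geometric sum over $b$ equals $q$ when $q \mid 2mc$ and vanishes otherwise. Since $\gcd(m, q) = 1$, this condition reduces to $q \mid 2c$, which has at most two solutions in $\Z_q$: $c = 0$ always, and $c = q/2$ when $q$ is even. Each such $c$ contributes at most $q$ in absolute value, so $|G(m,n;q)|^2 \leq 2q$, giving $|G(m,n;q)| \leq \sqrt{2q} \leq 2\sqrt{q}$.

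The argument has no substantive obstacle; the only mild point is that the even-$q$ case in (i) prevents one from obtaining the sharper $|G| = \sqrt{q}$ bound familiar for odd $q$ via completing the square (which requires $2m$ to be invertible modulo $q$). The factor $2$ in the stated bound absorbs this loss, and is sufficient for all downstream applications in the paper.
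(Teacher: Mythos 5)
Your proof is correct. The paper does not actually prove this lemma; it is stated as well-known with a citation to Estermann, so your self-contained elementary argument is a genuine addition rather than a variant of an existing proof. Both halves check out: in (ii), with $q=dq'$, $m=dm'$ and $a=b+q'c$, the cross term contributes $\eq(2mbq'c)=\e(2m'bc)=1$ and the square term contributes $\eq(mq'^2c^2)=\e(m'q'c^2)=1$, so the sum factors as claimed and the $c$-sum detects $d\mid n$; the identification of the $b$-sum with $G(m/d,n/d;q/d)$ is immediate (and the coprimality $\gcd(m',q')=1$ you mention, while true, is not even needed for this part). In (i), the substitution $a=b+c$ gives $a^2-b^2=2bc+c^2$, the inner geometric sum over $b$ vanishes unless $q\mid 2mc$, i.e.\ unless $q\mid 2c$ since $\gcd(m,q)=1$, leaving at most the two values $c=0$ and (for even $q$) $c=q/2$, whence $|G|^2\le 2q$. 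This actually yields the slightly sharper constant $\sqrt{2}$ in place of $2$; the stated bound follows a fortiori, and your closing remark correctly identifies the even-modulus case as the only obstruction to the exact evaluation $|G|=\sqrt{q}$.
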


In fact, we   encounter the following modified quadratic Gauss sums
\begin{equation}\label{eq:Gausssum*}
G^*(m,n;q) =\sum_{a\in\Z_q^*}\eq(ma^2+na)
\end{equation}
in subsequent evaluations of Kloosterman sums. To associate the above two sums, we may appeal to the
M\"obius formula, getting
 \begin{equation}
\label{eq:G-G*}
\begin{split}
G^*(m,n;q)
&=\sum_{d\mid q}\mu(d)\sum_{a\in\Z_{q/d}}\eq(m(ad)^2+nad) \\
&=\sum_{d\mid q}\mu(d)G(md,n;q/d).
\end{split}
\end{equation}
This, together with Lemma~\ref{lem:Gausssum} allows us to derive the following inequality.
\begin{lemma}\label{lem:Gausssum*-upperbound}
Let $q$ be a positive integer. For any $m,n\in\Z$, we have
$$
G^*(m,n;q) \ll q^{1/2+o(1)}\gcd(m,n,q)^{1/2}.
$$
\end{lemma}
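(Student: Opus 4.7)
The plan is to reduce the modified Gauss sum $G^*(m,n;q)$ to a sum of ordinary Gauss sums via the M\"obius expansion~\eqref{eq:G-G*}, bound each summand using Lemma~\ref{lem:Gausssum}, and then control the resulting factors by a careful $p$-adic analysis. Concretely, for each squarefree $d \mid q$, set $e_d=\gcd(md,q/d)$. By Lemma~\ref{lem:Gausssum}(ii) the term $G(md,n;q/d)$ vanishes unless $e_d \mid n$, and if it is nonvanishing then
$$
|G(md,n;q/d)| = e_d \bigl|G\bigl(md/e_d,n/e_d;q/(de_d)\bigr)\bigr| \le 2\sqrt{e_d q/d},
$$
since Lemma~\ref{lem:Gausssum}(i) applies after dividing out $e_d$.

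The next step is to bound $e_d$ when the corresponding term is nonvanishing. In that regime, $e_d \mid n$ together with $e_d \mid md$ and $e_d \mid q/d$ gives the equality $e_d = \gcd(md,n,q/d)$. I would then show the key divisibility
$$
e_d \mid d\cdot D, \qquad \text{where } D=\gcd(m,n,q),
$$
by a prime-by-prime verification. Fixing a prime $p$ and writing $a=v_p(q)$, $b=v_p(d) \in \{0,1\}$, $c=v_p(m)$, $c'=v_p(n)$, $D_p=\min(c,c',a)$, we have $v_p(e_d)=\min(b+c,c',a-b)$, and a short case split on which of $c,c',a$ realises $D_p$ shows $v_p(e_d)\le b+D_p$: if $D_p=c$ use the first entry, if $D_p=c'$ use the second, and if $D_p=a$ use the third (with the inequality $-b\le b$).

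Once $e_d \le dD$ is established, every nonvanishing summand is bounded uniformly in $d$ by
$$
|G(md,n;q/d)| \le 2\sqrt{e_d q/d} \le 2\sqrt{qD}.
$$
Summing over the squarefree divisors $d \mid q$ and invoking the standard bound $\tau(q) \ll q^{o(1)}$ yields
$$
|G^*(m,n;q)| \le \sum_{d \mid q,\,\mu(d)\ne 0} |G(md,n;q/d)| \le 2\tau(q)\sqrt{qD} \ll q^{1/2+o(1)}\gcd(m,n,q)^{1/2},
$$
as desired.

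The one step that requires genuine care is the divisibility $v_p(e_d)\le v_p(d)+v_p(D)$; everything else is a direct application of Lemma~\ref{lem:Gausssum} and the M\"obius expansion. A naive attempt to factor $\gcd(md,n,q/d)$ through $\gcd(md,n)$ or $\gcd(n,q/d)$ loses the dependence on $m$ or on $d$, so the cleanest route is the local case analysis above, which treats the three possible minima uniformly.
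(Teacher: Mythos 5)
Your proof is correct and follows essentially the same route as the paper's: the M\"obius expansion \eqref{eq:G-G*}, the bound from Lemma~\ref{lem:Gausssum}, and control of $\gcd(md,q/d)$ in the nonvanishing regime. The paper obtains the key divisibility $\gcd(md,q/d)\le d\gcd(m,n,q)$ a little more directly, via $\gcd(md,q/d)\le d\gcd(m,q/d)$ followed by the observation that $\gcd(m,q/d)\mid n$ forces $\gcd(m,q/d)=\gcd(m,q/d,n)\le\gcd(m,n,q)$, which replaces your prime-by-prime case split with a one-line gcd manipulation.
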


\begin{proof}
Note that $G(md,n;q/d)$ in~\eqref{eq:G-G*} vanishes unless $\gcd(md,q/d)\mid n$ by Lemma~\ref{lem:Gausssum}, in which case we have
$$
G(md,n;q/d)\ll \gcd(md,q/d)^{1/2}(q/d)^{1/2}\leqslant q^{1/2}\gcd(m,q/d)^{1/2}.
$$
We then infer
\begin{align*}
G^*(m,n;q)
&\ll q^{1/2}\sum_{d\mid q,~\gcd(m,q/d)\mid n}\gcd(m,q/d)^{1/2}\\
&=q^{1/2}\sum_{d\mid q,~\gcd(m,q/d)\mid n}\gcd(m,q/d,n)^{1/2},
\end{align*}
which readily gives Lemma~\ref{lem:Gausssum*-upperbound}.
\end{proof}

\subsection{Vanishing of Kloosterman sums}
We now turn to Kloosterman sums, and the following two lemmas characterize when such sums can vanish.

\begin{lemma}\label{lem:Kloosterman=0-initial}
Suppose $q=p^j$ with a prime $p$ and an integer  $j\geqslant2$. For any $m\in\Z$ with $p\mid m$, we have
$$
\cK_q(m,1)=0.
$$
\end{lemma}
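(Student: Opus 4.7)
The plan is to exploit the $p$-adic structure of $\Z_q^*$ by parametrising $x$ using a lift from $\Z_{p^{j-1}}^*$. Writing $q=p^j$ with $j\geqslant 2$, every $x\in\Z_q^*$ admits a unique representation
\[
x = x_0 + p^{j-1} t, \qquad x_0 \in \cR,\  t\in\{0,1,\ldots,p-1\},
\]
where $\cR\subseteq\Z_q^*$ is a set of representatives for the cosets of the subgroup $\{1+p^{j-1}s:s\in\Z/p\Z\}$, i.e.\ for the natural reduction $\Z_q^*\twoheadrightarrow\Z_{p^{j-1}}^*$. I would then rewrite $x=x_0(1+p^{j-1}t\,\overline{x_0})\pmod{q}$, where here $\overline{x_0}$ denotes the inverse of $x_0$ modulo~$q$.

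The key arithmetic facts driving the proof are both consequences of $j\geqslant 2$: first, $(p^{j-1})^2=p^{2j-2}\equiv 0\pmod{p^j}$, so the geometric series expansion for the inverse terminates and gives
\[
\overline{x} \equiv \overline{x_0}\bigl(1 - p^{j-1}t\,\overline{x_0}\bigr)\pmod{q};
\]
second, since $p\mid m$, the product $m\cdot p^{j-1}t \equiv 0\pmod{p^j}$, so $mx\equiv mx_0\pmod{q}$. Substituting both into the exponent of $\Kmnq$ yields the clean linearisation
\[
mx + \overline{x} \equiv \bigl(mx_0+\overline{x_0}\bigr) - p^{j-1}t\,\overline{x_0}^{\,2} \pmod{q}.
\]

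With this in hand, factoring the sum gives
\[
\cK_q(m,1) = \sum_{x_0\in\cR} \eq\!\bigl(mx_0+\overline{x_0}\bigr) \sum_{t=0}^{p-1} \ep\!\bigl(-t\,\overline{x_0}^{\,2}\bigr),
\]
and the inner geometric sum over $t$ vanishes because $\gcd(\overline{x_0}^{\,2},p)=1$ makes $-\overline{x_0}^{\,2}\not\equiv 0\pmod p$. Hence the entire expression is zero, as required.

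I do not anticipate any serious obstacle: the only point to be checked carefully is that the parametrisation is a bijection onto $\Z_q^*$ (which follows because $x_0+p^{j-1}t$ is coprime to $p$ for every $t$ as soon as $x_0$ is, using $j\geqslant 2$), and that the truncated expansion of $\overline{x}$ is exact modulo $p^j$; both reduce to the single identity $(p^{j-1})^2\equiv 0\pmod{p^j}$, which is the whole reason the case $j\geqslant 2$ behaves differently from $j=1$. Note that the argument is uniform in the prime $p$, so no separate treatment of $p=2$ is needed.
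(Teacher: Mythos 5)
Your proof is correct and follows essentially the same route as the paper: both decompose $\Z_q^*$ as $x_0+p^{j-1}t$ and show the inner sum over the $p$ values of $t$ vanishes, using $(p^{j-1})^2\equiv 0\pmod{p^j}$ and $p\mid m$. The only cosmetic difference is that the paper writes the sum (via the symmetry $\cK_q(m,1)=\cK_q(1,m)$) so that $m$ multiplies the inverse term, whence the surviving $t$-dependence sits in the linear term and no explicit expansion of $\overline{x}$ is needed, whereas you expand $\overline{x}$ directly; both versions are valid.
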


\begin{proof}
Note that $x+yp^{j-1}$ presents all elements of $\Z_q^*$ exactly  as long as $x$ runs over $\Z_{p^{j-1}}^*$ 
and $y$ runs over $\Z_p$, respectively.
Therefore,
\begin{align*}
\cK_q(m,1)
&=\sum_{x\in\Z_{p^{j-1}}^*}\sum_{y\in\Z_p}\e_{p^j}(m(x+yp^{j-1})^{-1}+x+yp^{j-1})\\
&=\sum_{x\in\Z_{p^{j-1}}^*}\sum_{y\in\Z_p}\e_{p^j}(mx^{-1}+x+yp^{j-1})
\end{align*}
since $p\mid m$. The inner sum over $y$ now vanishes due to the orthogonality of additive characters. This completes the proof.
\end{proof}

\begin{lemma}\label{lem:Kloosterman=0}
Suppose $q=p^j$ with a prime $p$ and an integer  $j\geqslant2$. For any $m,n\in\Z$ with
$$
q\nmid m,\quad q\nmid n,\quad \gcd(m,q)\neq \gcd(n,q),
$$
we have
$$
\cK_q(m,n)=0.
$$
\end{lemma}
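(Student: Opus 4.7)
The plan is to adapt the substitution trick from the proof of Lemma~\ref{lem:Kloosterman=0-initial} after first performing a $p$-adic descent that reduces to the case where one of the two arguments is coprime to $p$. Using the symmetry $\cK_q(m,n)=\cK_q(n,m)$, I may assume without loss of generality that
$$
\gcd(m,q)=p^a,\qquad \gcd(n,q)=p^b,\qquad 0\leqslant a<b\leqslant j-1,
$$
which in particular forces $j-a\geqslant 2$.

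The descent step rests on the identity
$$
\cK_{p^j}(m,n) = p\,\cK_{p^{j-1}}(m/p,\,n/p),
$$
valid whenever $p\mid m$, $p\mid n$, and $j\geqslant 2$. This is immediate: the phase $\e_{p^j}(mx+nx^{-1})$ then equals $\e_{p^{j-1}}((m/p)x+(n/p)x^{-1})$ and depends on $x$ only modulo $p^{j-1}$, while each residue class in $\Z_{p^{j-1}}^*$ lifts to exactly $p$ elements of $\Z_{p^j}^*$. Iterating this identity $a$ times reduces the problem to showing that $\cK_{q'}(m',n')=0$, where $q'=p^{j-a}$, $m'=m/p^a$, $n'=n/p^a$, with $\gcd(m',p)=1$, $p\mid n'$, and $j-a\geqslant 2$.

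For this remaining case I will parametrize every element of $\Z_{q'}^*$ uniquely as $x+yp^{j-a-1}$ with $x\in\Z_{p^{j-a-1}}^*$ and $y\in\Z_p$, exactly as in Lemma~\ref{lem:Kloosterman=0-initial}. Since $j-a\geqslant 2$, one has the exact expansion
$$
(x+yp^{j-a-1})^{-1}\equiv x^{-1}-yp^{j-a-1}x^{-2}\bmod{p^{j-a}},
$$
so the double sum decouples as
$$
\cK_{q'}(m',n')=\sum_{x\in\Z_{p^{j-a-1}}^*}\e_{q'}(m'x+n'x^{-1})\sum_{y\in\Z_p}\e_p(y(m'-n'x^{-2})).
$$
The inner sum in $y$ is a character sum which vanishes unless $m'\equiv n'x^{-2}\bmod p$; but $p\mid n'$ gives $n'x^{-2}\equiv 0\bmod p$, while $\gcd(m',p)=1$. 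The congruence therefore never holds, the inner sum vanishes for every $x$, and the whole expression is zero.

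The only real subtlety is the bookkeeping: after $a$ iterations of the descent one must still have $j-a\geqslant 2$ to guarantee that the first-order expansion of the inverse is exact. This follows from the integer inequalities $a\leqslant b-1\leqslant j-2$, which are precisely what the hypotheses $q\nmid n$ and $\gcd(m,q)\ne\gcd(n,q)$ provide. Once this is in place, the argument requires no new input beyond that of Lemma~\ref{lem:Kloosterman=0-initial}, adapted only to accommodate the more general second argument $n'$ with $p\mid n'$ in place of $n=1$.
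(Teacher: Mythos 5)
Your proof is correct, but it follows a genuinely different route from the paper's. The paper writes $m=p^\alpha m_1$, $n=p^\beta n_1$ with $\alpha<\beta<j$ and $p\nmid m_1n_1$, applies the Selberg--Kuznetsov identity~\eqref{eq:Selberg-Kuznetsov} to get
$$
\cK_q(m,n)=\sum_{0\leqslant s\leqslant \alpha}p^s\cK_{p^{j-s}}(m_1n_1p^{\alpha+\beta-2s},1),
$$
and then observes that each term on the right vanishes by Lemma~\ref{lem:Kloosterman=0-initial}, since $j-s\geqslant j-\alpha\geqslant 2$ and $\alpha+\beta-2s\geqslant\beta-\alpha\geqslant 1$. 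You instead prove from scratch the descent identity $\cK_{p^j}(m,n)=p\,\cK_{p^{j-1}}(m/p,n/p)$ for $p\mid\gcd(m,n)$ and $j\geqslant 2$ (which is in substance the content of the paper's later Lemma~\ref{lem:Kloosterman-pulloutd}, there deduced from Selberg--Kuznetsov), iterate it to strip off the common power $p^a$, and then rerun the substitution $x\mapsto x+yp^{j'-1}$ of Lemma~\ref{lem:Kloosterman=0-initial} in the slightly more general setting where the second argument is a multiple of $p$ rather than $1$; the inner sum over $y$ carries the coefficient $m'-n'x^{-2}$, which is a unit mod $p$, so it vanishes. Your bookkeeping is right: $q\nmid m$, $q\nmid n$ and $\gcd(m,q)\neq\gcd(n,q)$ give $a\leqslant b-1\leqslant j-2$, so $j-a\geqslant 2$ and every intermediate modulus in the descent stays at least $p^2$, which is exactly what both the first-order expansion of the inverse and the fiber count of $\Z_{p^k}^*\to\Z_{p^{k-1}}^*$ require. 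The trade-off is that your argument is more self-contained (it never invokes Selberg--Kuznetsov and only reuses the idea, not the statement, of Lemma~\ref{lem:Kloosterman=0-initial}), whereas the paper's is shorter given that the identity~\eqref{eq:Selberg-Kuznetsov} is already on record and Lemma~\ref{lem:Kloosterman=0-initial} can be quoted verbatim.
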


\begin{proof}
Without loss of generality, we assume $m=p^\alpha m_1$, $n=p^\beta n_1$ with $\alpha<\beta<j$ and $p\nmid m_1n_1$.
From~\eqref{eq:Selberg-Kuznetsov} it follows that
$$
\cK_q(m,n)
=\sum_{0\leqslant s\leqslant \alpha}p^s\cK_{p^{j-s}}(m_1n_1p^{\alpha+\beta-2s},1).
$$
Note that $j-\alpha\geqslant2$ and $\alpha+\beta-2s\geqslant1$ for any $0\leqslant s\leqslant \alpha$. We are now in a position to apply Lemma~\ref{lem:Kloosterman=0-initial} to each Kloosterman sum on the right hand side, and this proves the lemma immediately.
\end{proof}

\subsection{Exact expressions of Kloosterman sums}

The following lemma allows one to extract $\gcd(m,n,q)$ from $\cK_q(m,n)$ if $q\nmid \gcd(m,n)$.

\begin{lemma}\label{lem:Kloosterman-pulloutd}
Suppose $q=p^j$ with a prime $p$ and an integer  $j\geqslant2$ and $m,n\in\Z$. Let $d=\gcd(m,n,q)$. If $d\neq q$, then we have
$$
\cK_q(m,n)=d\cK_{q^*}(m^*,n^*),
$$
where $m^*=m/d$,  $n^*=n/d$ and $q^*=q/d$.
\end{lemma}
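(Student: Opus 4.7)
The plan is to prove the identity by a direct change of variables on the summation index of the Kloosterman sum, exploiting the fact that $d = p^s$ for some $s$ with $0 \le s < j$, so that $q^* = q/d = p^{j-s}$ is still divisible by $p$. First I would parametrize every $x \in \Z_q^*$ uniquely as $x = y + q^* t$ with $y$ running through a system of representatives modulo $q^*$ and $t$ running through $\{0, 1, \ldots, d-1\}$. The key observation here is that $\gcd(x, q) = 1$ is equivalent to $p \nmid x$, which, since $p \mid q^*$, is in turn equivalent to $p \nmid y$, i.e.\ $y \in \Z_{q^*}^*$. Thus the map $(y, t) \mapsto y + q^* t$ is a bijection $\Z_{q^*}^* \times \{0, 1, \ldots, d-1\} \to \Z_q^*$.

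Next I would simplify each of the two phase factors. For the $mx$ part, compute
\[
m(y + q^* t) = my + dm^* q^* t = my + m^* q t,
\]
so that $\e_q(m(y + q^* t)) = \e_q(my)$ and in fact $\e_q(my) = \e_q(d m^* y) = \e_{q^*}(m^* y)$; crucially the $t$-dependence disappears. For the $n\bar x$ part, write $\e_q(n\bar x) = \e_q(d n^* \bar x) = \e_{q^*}(n^* \bar x)$, and note that since the character only sees $\bar x$ modulo $q^*$, we may replace $\bar x$ by $\bar y$, the inverse of $y$ in $\Z_{q^*}^*$: indeed $x \equiv y \pmod{q^*}$ together with $x \bar x \equiv 1 \pmod{q}$ forces $\bar x \equiv \bar y \pmod{q^*}$.

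Putting these together, the summation factorises as
\[
\cK_q(m,n) = \sum_{y \in \Z_{q^*}^*} \sum_{t=0}^{d-1} \e_{q^*}(m^* y + n^* \bar y) = d \sum_{y \in \Z_{q^*}^*} \e_{q^*}(m^* y + n^* \bar y) = d\, \cK_{q^*}(m^*, n^*),
\]
which is exactly the claim. There is no real obstacle: the whole argument is a careful bookkeeping exercise. The only point requiring a moment's care is the behaviour of the modular inverse under reduction modulo $q^*$, but this is immediate. Note that when $s = 0$ (i.e.\ $d = 1$) the identity is trivial, and the hypothesis $d \ne q$ is precisely what guarantees $q^* \ge p$, which ensures that $\Z_{q^*}^*$ is a meaningful group of units in which $\bar y$ makes sense.
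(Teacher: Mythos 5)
Your proof is correct, but it takes a genuinely different route from the paper. You prove the identity by a direct change of variables: writing $x=y+q^*t$ with $y$ ranging over $\Z_{q^*}^*$ and $t$ over $\{0,\dots,d-1\}$, observing that the phase $\e_q(mx+n\bar x)$ depends only on $y$ (and equals $\e_{q^*}(m^*y+n^*\bar y)$), and collecting the factor $d$ from the free variable $t$. All the steps check out; in particular the bijectivity of $(y,t)\mapsto y+q^*t$ onto $\Z_q^*$ does rest on $p\mid q^*$, which is exactly where the hypothesis $d\neq q$ enters, and you correctly note that $\bar x\equiv\bar y\pmod{q^*}$. The paper instead deduces the lemma from the Selberg--Kuznetsov identity~\eqref{eq:Selberg-Kuznetsov}: it expands $\cK_q(m,n)=\sum_{r\mid d}r\,\cK_{q/r}(mnr^{-2},1)$ and uses the vanishing Lemma~\ref{lem:Kloosterman=0-initial} to kill every term except $r=d$, so that only $d\,\cK_{q^*}(m^*n^*,1)=d\,\cK_{q^*}(m^*,n^*)$ survives. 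The paper's argument is shorter given the machinery already in place (the identity and the vanishing lemma are both stated and used elsewhere in the appendix), whereas yours is self-contained and more elementary, requiring neither ingredient. A further small benefit of your approach is that it transparently generalises beyond prime powers: the same computation proves the identity for arbitrary $q$ whenever every prime dividing $q$ also divides $q/d$, while the paper's proof is tied to the prime-power setting through Lemma~\ref{lem:Kloosterman=0-initial}.
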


\begin{proof}
From~\eqref{eq:Selberg-Kuznetsov} it follows that
$$
\cK_q(m,n)
=\sum_{r\mid d}r\cK_{q/r}(mnr^{-2},1).
$$
Since $d\neq q$, we find $p^2\mid q/r$ and $p\mid mnr^{-2}$ unless $r=d$. By Lemma~\ref{lem:Kloosterman=0-initial}, only the contribution from $r=d$ survives, and this proves Lemma~\ref{lem:Kloosterman-pulloutd} immediately.
\end{proof}

For the explicit evaluations of  Kloosterman sums $\cK_q(m,n)$ with $q=p^j$ and $j\ge2$, it suffices to consider the case $\gcd(m,n,q)=1$ in view of Lemma~\ref{lem:Kloosterman-pulloutd}, which condition is equivalent to $p\nmid\gcd (m,n)$. Furthermore, the vanishing property in Lemma~\ref{lem:Kloosterman=0} leads us to consider the case $\gcd(mn,q)=1$.

We now formulate an exact expression of $\cK_q(m,n)$ modulo an odd prime power $q$ with $\gcd(mn,q)=1$, and the case modulo ${2^j}$ can be derived in a similar manner.
One may refer to Iwaniec~\cite[Proposition~4.3]{Iwa97} for original resources.

\begin{lemma}\label{lem:Kloosterman-exact}
Suppose $q=p^j$ with a prime $p$ and an integer  $j\geqslant2$ and $\gcd(q,2mn)=1$. We have $\cK_q(m,n)=0$ unless $m\equiv l^2n \bmod q$ for some $l\in\Z$, in which case there is
$$
\cK_q(m,n)=\(\frac{ln}{q}\)q^{1/2} \Re\{\ve_q \e_q(2ln)\},
$$
where $(\frac{\cdot}{q})$ denotes the Legendre-Jacobi symbol $\bmod q$, and $\ve_q=1$ or $i$ according to $q\equiv 1$ or $-1 \bmod 4$.
\end{lemma}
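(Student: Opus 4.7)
The plan is to evaluate $\cK_q(m,n)$ by a $p$-adic stationary-phase expansion of the summation variable. Set $k=\lceil j/2\rceil$ and $k'=j-k$, so $k+k'=j$, $p^{2k}\equiv 0\pmod q$, and each $x\in\Z_q^*$ admits a unique factorization $x=y(1+p^kt)$ with $y\in\{0,\ldots,p^k-1\}$ coprime to $p$ and $t\in\Z_{p^{k'}}$. Expanding $x^{-1}\equiv y^{-1}(1-p^kt+p^{2(k-1)}t^2y^{-1}-\cdots)\pmod q$ keeps only the linear term when $j=2k$ and one extra quadratic term of order $p^{2(k-1)}=q/p$ when $j=2k-1$. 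Substituting and applying orthogonality to the inner sum over $t$ will force the congruence $my^2\equiv n\pmod{p^{k'}}$; by Hensel's lemma (since $p$ is odd), this is solvable precisely when $m/n$ is a quadratic residue mod $q$, i.e.\ $m\equiv l^2 n\pmod q$, and otherwise $\cK_q(m,n)=0$. In the even case, the two admissible classes are $y\equiv\pm l^{-1}\pmod{p^k}$, and using the Hensel lift $y_0\equiv l^{-1}\pmod q$ (for which $my_0+ny_0^{-1}\equiv 2ln$) one reads off
\[
\cK_q(m,n)=p^k\bigl(\e_q(2ln)+\e_q(-2ln)\bigr),
\]
which is the stated form with $\varepsilon_q=1$ and trivial Jacobi symbol (both automatic for $q=p^{2k}$ with $p$ odd).

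In the odd case $j=2k-1$, the congruence $my^2\equiv n\pmod{p^{k-1}}$ admits two solution classes, each lifting to $p$ residues modulo $p^k$; parametrize $y=\eta y_0+sp^{k-1}$ with $\eta\in\{\pm 1\}$, $s\in\Z_p$. Expanding $y^{-1}$ to second order in $sp^{k-1}$ and invoking the Hensel-lifted identities $ny_0^{-2}\equiv m$ and $ny_0^{-3}\equiv my_0^{-1}\pmod q$ makes the linear-in-$s$ contributions cancel, leaving
\[
my+ny^{-1}\equiv 2\eta ln+\eta\, m l\, s^2 (q/p)\pmod q.
\]
Thus $\e_q(my+ny^{-1})=\e_q(2\eta ln)\e_p(\eta mls^2)$, and the inner sum over $s\in\Z_p$ reduces to the classical quadratic Gauss sum, evaluated as $\left(\frac{\eta ml}{p}\right)\varepsilon_p\sqrt{p}$. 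Multiplying by the $t$-contribution $p^{k-1}$, simplifying via $\left(\frac{ml}{p}\right)=\left(\frac{ln}{p}\right)=\left(\frac{ln}{q}\right)$ (the latter since $j$ is odd), and summing $\eta=\pm 1$ with a short case split on $p\bmod 4$ (which determines $\varepsilon_p$ and $\varepsilon_q$) will reassemble the answer in the shape $\left(\frac{ln}{q}\right)q^{1/2}\Re\{\varepsilon_q\e_q(2ln)\}$, up to the standard factor of $2$ reflecting the two square roots $\pm l$.

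The main obstacle is the second-order bookkeeping in the odd case: I must truncate the expansion of $y^{-1}$ at exactly the right $p$-adic order, work consistently modulo $q$ (not modulo $p^{k-1}$) when computing $my+ny^{-1}$, and use the full Hensel relation $my_0^2\equiv n\pmod q$ so that the linear-in-$s$ contributions really do cancel and the $t$-sum contributes the clean factor $p^{k-1}$. This cancellation is what isolates the surviving quadratic-in-$s$ Gauss sum over $\F_p$, from which $\varepsilon_q$ ultimately arises via the classical evaluation; once this delicate point is secured, the remaining manipulations are routine.
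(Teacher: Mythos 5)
The paper offers no proof of this lemma at all: it is quoted from Iwaniec~\cite[Proposition~4.3]{Iwa97}, with only the remark that the case $q=2^j$ is similar. Your sketch is essentially a reconstruction of that standard stationary-phase argument (parametrize $\Z_q^*$ by $x=y(1+p^kt)$, use orthogonality in $t$ to force $my^2\equiv n\bmod{p^{k'}}$, then evaluate the surviving critical points, with a residual Gauss sum over $\F_p$ when $j$ is odd), so in substance you and the cited source take the same route. Two points deserve attention.

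First, a slip in your opening expansion: the quadratic term of $(1+p^kt)^{-1}$ is $p^{2k}t^2$, not $p^{2(k-1)}t^2y^{-1}$, and since $2k\geqslant j$ it vanishes modulo $q$ for \emph{both} parities of $j$; thus $x^{-1}\equiv y^{-1}(1-p^kt)\bmod q$ always, and the $t$-sum contributes only the orthogonality condition. The genuine quadratic contribution in the odd case comes not from the $t$-expansion but from your second, finer parametrization $y=\eta y_0+sp^{k-1}$, where $p^{2(k-1)}=q/p$ does survive. Your detailed treatment of that step --- killing the linear-in-$s$ term via the full Hensel relation $my_0^2\equiv n\bmod q$ and extracting the Gauss sum $\sum_{s}\ep(\eta mls^2)=\(\frac{\eta ml}{p}\)\ve_p\sqrt{p}$ --- is correct, so this is an internal inconsistency of exposition rather than a mathematical gap; the same mechanism is also what justifies replacing the representative $y\bmod p^k$ by its Hensel lift in the even case, which your sketch uses implicitly.

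Second, the factor of $2$ you flag at the end is real, and it is a defect of the lemma as displayed, not of your derivation. Both your even-case computation $p^{j/2}(\eq(2ln)+\eq(-2ln))$ and the odd-case assembly yield $\cK_q(m,n)=2\(\frac{ln}{q}\)q^{1/2}\Re\{\ve_q\eq(2ln)\}$, in agreement with Iwaniec's formula and with direct computation (for instance $\cK_9(1,1)=6\cos(4\pi/9)$, whereas the displayed formula with $l=1$ gives $3\cos(4\pi/9)$). The omission is harmless for the paper, which uses the lemma only through upper bounds and through the symmetrized sum over $l\in\Z_q^*$ in Lemma~\ref{lem:T-upperbound-(xy,q)=1} (where a compensating $\tfrac12$ appears), but your write-up should state the evaluation with the factor $2$ rather than claim to match the display verbatim.
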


\subsection{Estimates for $T(x,y,z;q)$}
%%commB Reworded
Our proof of Lemma~\ref{lem:T-upperbound-q} requires first considering some special cases.

\begin{lemma}\label{lem:T-upperbound-(xy,q)=1}
Let $q=p^j$ with a prime $p$ and an integer  $j\geqslant2$. For $x,y,z\in \Z$ with $\gcd(xy,q)=1$, we have
$$
T(x,y,z;q)\ll \gcd(x-y,z,q)^{1/2}q^{1/2+o(1)}.
$$
\end{lemma}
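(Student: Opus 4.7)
The plan is to exploit the exact evaluation of Kloosterman sums modulo a prime power $q=p^j$ ($j\ge 2$) supplied by Lemma~\ref{lem:Kloosterman-exact}, thereby reducing $T(x,y,z;q)$ to a short combination of modified quadratic Gauss sums that can be controlled by Lemma~\ref{lem:Gausssum*-upperbound}. I will present the argument for odd $p$; the case $p=2$ is handled in the same spirit by the analogous class formula for Kloosterman sums modulo $2^j$.

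First I would restrict the $t$-sum in~\eqref{eq:exponentialsum-T} to $t\in\Z_q^*$. If $\gcd(t,q)=p^s$ with $0<s<j$, then $\gcd(x,q)=1\ne p^s$ and Lemma~\ref{lem:Kloosterman=0} forces $\cK_q(x,t)=0$; and if $q\mid t$ then $\cK_q(x,0)=c_q(x)=0$ since $\gcd(x,q)=1$ and $j\ge 2$. For $t\in\Z_q^*$, Lemma~\ref{lem:Kloosterman-exact} says that $\cK_q(x,t)=0$ unless $xt^{-1}$ is a quadratic residue modulo $q$, and likewise for $\cK_q(y,t)$. Thus $T(x,y,z;q)=0$ unless $yx^{-1}$ is a quadratic residue modulo $q$, in which case I fix $\beta\in\Z_q^*$ with $\beta^2\equiv yx^{-1}\bmod q$ and parametrize the surviving $t$'s by $t\equiv xa^2\bmod q$ with $a\in\Z_q^*$ (each valid $t$ arising from the two values $\pm a$).

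For such $t$, applying Lemma~\ref{lem:Kloosterman-exact} with $l_1=a^{-1}$ for $\cK_q(x,t)$ and $l_2=\beta a^{-1}$ for $\cK_q(y,t)$, and expanding each of the two real parts $\Re\{\varepsilon_q\e_q(2xa)\}$, $\Re\{\varepsilon_q\e_q(2\beta xa)\}$ into two exponentials, the sum over $a\in\Z_q^*$ takes the form
\[
T(x,y,z;q)=\frac{1}{8}\left(\tfrac{\beta}{q}\right)\sum_{\eta_1,\eta_2\in\{\pm1\}}c_{\eta_1,\eta_2}\,G^{*}\!\left(-zx,\,2x(\eta_1+\eta_2\beta);\,q\right),
\]
with unit coefficients $c_{\eta_1,\eta_2}\in\{\varepsilon_q^{\,2},\bar\varepsilon_q^{\,2},1\}$, where $G^*$ is the modified quadratic Gauss sum defined by~\eqref{eq:Gausssum*}.

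To finish, I would invoke Lemma~\ref{lem:Gausssum*-upperbound} on each of the four Gauss sums. Using $\gcd(2x,q)=1$, each is bounded by $q^{1/2+o(1)}\gcd(z,1\pm\beta,q)^{1/2}$. From the identity $(1-\beta)(1+\beta)\equiv x^{-1}(x-y)\bmod q$ and $\gcd(x,q)=1$ we get $\gcd(1-\beta^2,q)=\gcd(x-y,q)$. Since $p$ is odd, any prime dividing both $1-\beta$ and $1+\beta$ must divide $2$; hence $\gcd(\gcd(1-\beta,q),\gcd(1+\beta,q))=1$, which forces one of $\gcd(1\pm\beta,q)$ to equal $\gcd(x-y,q)$ while the other equals $1$. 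Therefore the dominant Gauss sum is bounded by $q^{1/2+o(1)}\gcd(x-y,z,q)^{1/2}$, which yields the lemma. The main obstacle I foresee is the careful bookkeeping of the various gcds through the coprimality of $\gcd(1-\beta,q)$ and $\gcd(1+\beta,q)$; the $p=2$ case parallels the above with minor adjustments arising from the separate evaluation of Kloosterman sums modulo $2^j$.
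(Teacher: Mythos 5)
Your proposal is correct and follows essentially the same route as the paper's proof: restrict to $t\in\Z_q^*$, use Lemma~\ref{lem:Kloosterman-exact} to parametrize the surviving $t$ by squares, expand the real parts into four modified Gauss sums $G^*$, bound each via Lemma~\ref{lem:Gausssum*-upperbound}, and convert $\gcd(1\pm\beta,q)$ into a divisor of $\gcd(x-y,q)$. The only cosmetic differences are your use of Lemma~\ref{lem:Kloosterman=0} (rather than Lemma~\ref{lem:Kloosterman=0-initial}) to discard non-unit $t$, and your slightly sharper observation that for odd $p$ one of $\gcd(1\pm\beta,q)$ is trivial, neither of which changes the argument.
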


\begin{proof}
For purely technical reasons, we only give the details for $p>2$, %commB and the -> the
the $p=2$ case can be treated in a similar way.
From~\eqref{eq:Kloosterman-transition} and Lemmas~\ref{lem:Kloosterman=0-initial} and~\ref{lem:Kloosterman-exact}, we find
$$
\cK_q(x,t)\cK_q(y,t)=0
$$
unless $t\in \Z_q^*$ and that both of $tx^{-1}$ and $ty^{-1}$ are quadratic residues modulo $q$, in which case we have
$$
t\equiv l^2x\equiv l^2c^2y \bmod q
$$
with $c^2y\equiv x\bmod q$.
Hence
\begin{align*}
T(x,y,z;q)
&=\frac{1}{q}\sum_{t\in\Z_q^*}\cK_q(x,t)\cK_q(y,t)\eq(-zt)\\
&=\frac{1}{2q}\sum_{l\in\Z_q^*}\cK_q(x,l^2x)\cK_q(c^{-2}x,l^2x)\eq(-xzl^2).
\end{align*}
From Lemma~\ref{lem:Kloosterman-exact} it follows that
$$
T(x,y,z;q)
 =\frac{1}{2}\(\frac{xyc}{q}\)\sum_{l\in\Z_q^*}\Re\{\ve_q \e_q(2lx)\} \Re\{\ve_q \e_q(2lc^{-1}x)\} \eq(-zl^2x).
$$
Expanding the real parts, $T(x,y,z;q)$   bear four quadratic exponential sums of shapes $G^*(\cdot,\cdot;q)$ as defined by~\eqref{eq:Gausssum*}. 
From Lemma~\ref{lem:Gausssum*-upperbound}, it follows that
$$
T(x,y,z;q) \ll q^{1/2+o(1)}\sum_\pm \gcd(zx,(1\pm c^{-1})x,q)^{1/2}.
$$
Since $c^2y\equiv x\bmod q$ and $\gcd(xy,q)=1$, we find
$$
\gcd((1\pm c^{-1})x,q)=\gcd(x\pm cy,q),
$$
which divide
$$
\gcd(x^2-c^2y^2,q)=\gcd(x^2-xy,q)=\gcd(x-y,q).
$$
This completes the proof.
\end{proof}

\begin{lemma}\label{lem:T-upperbound-p}
Let $p$ be a prime. For any $x,y,z\in\Z$, we have
$$
T(x,y,z;p)
\ll \gcd(x,y,p)^{1/2}\gcd\(x-y,z,\frac{p}{(x,y,p)}\)^{1/2}p^{1/2}.
$$
\end{lemma}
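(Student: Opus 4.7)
The plan is to open up the two Kloosterman sums in the definition of $T(x,y,z;p)$ and use orthogonality in $t$ to collapse the sum, producing a single one-variable sum that can be handled by either the Weil bound or the trivial Ramanujan-sum bound. Substituting $\cK_p(x,t)=\sum_{u\in\Z_p^*}\e_p(xu+tu^{-1})$ (and similarly for $\cK_p(y,t)$) into~\eqref{eq:exponentialsum-T}, interchanging the order of summation, and noting that $\sum_{t\in\Z_p}\e_p(t(u^{-1}+v^{-1}-z))$ equals $p$ or $0$ according as $u^{-1}+v^{-1}\equiv z\bmod{p}$ or not, one obtains the key identity
\[
T(x,y,z;p) = \sum_{\substack{u,v\in\Z_p^*\\ u^{-1}+v^{-1}\equiv z\bmod p}}\e_p(xu+yv).
\]
The proof then splits into two cases according to whether $p\mid z$.

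When $p\nmid z$, I would change variables $\alpha=zu$, $\beta=zv$ to rewrite the constraint as the hyperbola $(\alpha-1)(\beta-1)\equiv 1\bmod{p}$, and then parametrize its $p-2$ solutions by $s\in\Z_p^*\setminus\{-1\}$ via $\alpha=1+s$, $\beta=1+s^{-1}$. The phase decomposes as $xu+yv = z^{-1}(x+y)+z^{-1}(xs+ys^{-1})$, so that extending the $s$-sum to all of $\Z_p^*$ and subtracting the contribution of the single excluded point $s=-1$ (where the exponent equals $-z^{-1}(x+y)$) yields
\[
T(x,y,z;p) = \e_p(z^{-1}(x+y))\,\cK_p(xz^{-1},yz^{-1}) - 1.
\]
Since $p\nmid z$ forces $\gcd(xz^{-1},yz^{-1},p)=\gcd(x,y,p)$, the Weil bound~\eqref{eq:Weil} gives $|T(x,y,z;p)|\ll \gcd(x,y,p)^{1/2}p^{1/2}$, which dominates the target bound because $\gcd(x-y,z,p/(x,y,p))\geq 1$.

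When $p\mid z$, the constraint collapses to $v\equiv -u\bmod{p}$, and the sum reduces to the Ramanujan sum $c_p(x-y)$, which is bounded by $\gcd(x-y,p)$ via~\eqref{eq:Ramanujansum-upperbound}; since $p\mid z$ gives $\gcd(x-y,z,p)=\gcd(x-y,p)$, the desired estimate follows after a brief inspection of the two possibilities $\gcd(x,y,p)\in\{1,p\}$. The main technical point is the change of variables in the case $p\nmid z$: the overall phase $\e_p(z^{-1}(x+y))$ must be tracked carefully so that it cancels the phase produced at the excluded point $s=-1$, leaving the clean $-1$ correction; after this bookkeeping, the Weil bound immediately delivers the square-root saving.
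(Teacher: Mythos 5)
Your proposal is correct and follows essentially the same route as the paper: orthogonality in $t$ collapses $T(x,y,z;p)$ to a sum over the hyperbola $u^{-1}+v^{-1}\equiv z$, which for $p\nmid z$ is rewritten (after a linear change of variable) as $\e_p((x+y)z^{-1})\cK_p(xz^{-1},yz^{-1})-1$ and estimated by the Weil bound, while for $p\mid z$ it reduces to the Ramanujan sum $c_p(x-y)$ bounded via~\eqref{eq:Ramanujansum-upperbound}. The only cosmetic difference is your parametrization of the hyperbola through $(\alpha-1)(\beta-1)\equiv 1$ rather than the paper's direct substitution $a\mapsto z^{-1}(a+1)$; both yield the identical identity and the same conclusion.
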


\begin{proof}
If $p\mid z$, we find
$$
T(x,y,0;p)=\frac{1}{p}\sum_{t\in\Z_p}\cK_q(x,t)\cK_q(y,t)=c_p(x-y)
$$
is a Ramanujan sum.
The desired estimate then follows from the elementary inequality~\eqref{eq:Ramanujansum-upperbound}.
For $p\nmid z$, the orthogonality of additive characters yields
$$
T(x,y,z;p)
 =\sum_{a\in\Z_p^*\setminus\{z^{-1}\}}\ep(ax+a(az-1)^{-1}y).
$$
With the change of variable $a\rightarrow z^{-1}(a+1)$, we derive that
\begin{align*}
T(x,y,z;p)
&=\sum_{a\in\Z_p^*\setminus\{-1\}}\ep({x{z}^{-1}a+y{z}^{-1}a^{-1}})\ep((x+y)z^{-1})\\
&=\cK_p(x{z}^{-1},y{z}^{-1})\ep((x+y)z^{-1})-1.
\end{align*}
Now the results  follows directly from the Weil bound~\eqref{eq:Weil}.
\end{proof}

\subsection{Proof of Lemma~\ref{lem:T-upperbound-q}}
We are now ready to prove Lemma~\ref{lem:T-upperbound-q} for a general modulus $q$. In view of the twisted multiplicativity~\eqref{eq:T-multiplicativity} and Lemma~\ref{lem:T-upperbound-p}, it suffices to consider the situation of prime power moduli $q=p^j$ with $j\geqslant2$,
and we split the performance to two cases according to the divisibility of $x,y$ by $q$.

\underline{{\bf Case I}: $q\mid x$ or $q\mid y$.} Without loss of generality, we may assume $q\mid x$. From~\eqref{eq:Ramanujansum-upperbound}, it follows that
\begin{align*}
|T(x,y,z;q)|
&\leqslant \frac{1}{q}\sum_{t\in\Z_q}\gcd(t,q)|\cK_q(y,t)|\\
&\leqslant \gcd(y,q)+q^{-1/2+o(1)}\gcd(y,q)^{1/2}\sum_{1\leqslant t<q}\gcd(t,q)\\
&\leqslant q^{1/2+o(1)}\gcd(y,q)^{1/2}
\end{align*}
as desired.

\underline{{\bf Case II}: $q\nmid x$ and $q\nmid y$.}
We first extract the zero-th frequency, and the bound~\eqref{eq:Ramanujansum-upperbound} for Ramanujan sums gives
\begin{equation}\label{eq:T-T1}
|T(x,y,z;q)| \leqslant |T_1(x,y,z;q)|+\frac{\gcd(x,q)\gcd(y,q)}{q}
\end{equation}
with
$$
T_1(x,y,z;q)
=\frac{1}{q}\sum_{1\leqslant t<q}\cK_q(x,t)\cK_q(y,t)\eq(-zt).
$$

For each $1\leqslant t<q$, Lemma~\ref{lem:Kloosterman=0} yields that $\cK_q(x,t)\cK_q(y,t)$ vanishes unless
$$
\gcd(x,q)=\gcd(y,q)=\gcd(t,q)=p^k
$$
for some $k=k(t)<j$,
in which case Lemma~\ref{lem:Kloosterman-pulloutd} guarantees
$$
\cK_q(x,t)=p^k\cK_{q_0}(x_0,t_0), \qquad  \cK_q(y,t)=p^k\cK_{q_0}(y_0,t_0)
$$
with $x_0=x/p^k, y_0=y/p^k, t_0=t/p^k, q_0=q/p^k$. Hence we have $\gcd(x_0y_0,q_0)=1$ and
\begin{equation}\label{eq:T1-T1x0y0q0}
T_1(x,y,z;q)=p^kT_1(x_0,y_0,z;q_0).
\end{equation}
Note that
$$
T(x_0,y_0,z;q_0)-T_1(x_0,y_0,z;q_0)\ll \frac{1}{q_0}
$$
in view of~\eqref{eq:Ramanujansum-upperbound},
and Lemma~\ref{lem:T-upperbound-(xy,q)=1} yields
$$
T_1(x_0,y_0,z;q_0)\ll \frac{1}{q_0}+\gcd(x_0-y_0,z,q_0)^{1/2}q_0^{1/2+o(1)},
$$
from which and~\eqref{eq:T-T1}, \eqref{eq:T1-T1x0y0q0}, Lemma~\ref{lem:T-upperbound-q} follows immediately.

\section*{Acknowledgements}

The authors would like to thank Christian Bagshaw for the careful reading of the manuscript
and pointing out some impecisions in the initial version.

During the preparation of this work, B.~Kerr was supported by the Max Planck Institute for Mathematics,
I.~E.~Shparlinski was supported in part by the ARC  (DP170100786), X.~Wu was supported in part by the  NSFC (No.~12271135), and
P.~Xi was supported in part by the NSFC (No.~12025106, No.~11971370) and by The Young Talent Support Plan in Xi'an Jiaotong University.

\end{document}